\documentclass[11pt,a4paper]{article}
\usepackage[utf8]{inputenc}
\usepackage{amsmath}
\usepackage{amsfonts}
\usepackage{tikz}
\usepackage{amssymb}
\usepackage{amsthm}
\usepackage{graphicx}
\usepackage{subfig}
\usepackage{hyperref}
\usepackage{color}
\usepackage{comment}
\usepackage{appendix}
\newtheorem{assume}{Assumption}
\newtheorem{theorem}{Theorem}
\newtheorem{lemma}{Lemma}
\newtheorem{remark}{Remark}
\newtheorem{corollary}{Corollary}
\newtheorem{definition}{Definition}
\setlength\oddsidemargin{0mm}
\setlength\evensidemargin\oddsidemargin \setlength\textwidth{160mm}

\title{The PML method for calculating the propagative wave numbers of electromagnetic wave in periodic structures}

\author{Lide Cai\thanks{\footnotesize Department of Mathematical Sciences, Tsinghua University, Beijing 100084, China. 
(cld19@mails.tsinghua.edu.cn).}
\and Junqing Chen\thanks{\footnotesize
Department of Mathematical Sciences, Tsinghua University, Beijing
100084, China. The work of this author was partially supported by National Key R\&D Program of China 2019YFA0709600, 2019YFA0709602. (jqchen@tsinghua.edu.cn).}
\and Yanpeng Gao\thanks{\footnotesize
Department of Mathematical Sciences, Tsinghua University, Beijing
100084, China. (gaoyp24@mails.tsinghua.edu.cn)
}
}

\begin{document}
\maketitle
\begin{abstract}
	When the electromagnetic wave is incident on the periodic structures, in addition to the scattering field, some guided modes that are traveling in the periodic medium could be generated. In the present paper, we study the calculation of guided modes. We formulate the problem as a nonlinear eigenvalue problem in an unbounded periodic domain. Then we use perfectly matched layers to truncate the unbounded domain, recast the problem to a quadratic eigenvalue problem, and prove the approximation property of the truncation. Finally, we formulate the quadratic eigenvalue problem to a general eigenvalue problem, use the finite element method to discrete the truncation problem, and show numerical examples to verify theoretical results. 
\end{abstract}
{{\bf Mathematics Subject Classification}(MSC2020): 35P30, 35B27, 78M10, 47A56}\\
{{\bf Keywords:} electromagnetic wave in periodic structure, propagating mode, open waveguide, perfectly matched layer, quadratic eigenvalue}

\section{Introduction}

 The scattering problem in periodic structures has long been an important topic in the numerical analysis community.  When an electromagnetic wave is incident on the periodic structures, in addition to the scattering part, there are certain guided modes which travel along the periodic structure. 
In the present paper, we consider the periodic scattering problem in $\mathbb{R}^2$. We assume that the wave number $k$ is  homogeneous in $\mathbb{R}^2$ and the refractive index $\gamma(x) \in L^{\infty} ( \mathbb{R}^2)$ is $\Lambda$-periodic in the $x_1$ direction. That is to say,
$$\gamma(x_1+\Lambda,x_2)=\gamma(x_1,x_2),$$
and $\gamma(x_1,x_2)=1$ for $|x_2|>h_0$ for some $h_0>0$. We let $\Omega=\{(x_{1},x_{2})|-\frac{\Lambda}{2}<x_{1}<\frac{\Lambda}{2},x_{2}\in\mathbb{R}\}$ be the Wigner-Seitz cell of the periodic structure \cite{Lechleiter}. Then we can state the scattering problem as follows: Given an incident wave $u^{inc}$, the governing equation of the total field $u$ is 
\begin{equation}
	\Delta u + k^2 \gamma(x)u = 0, \label{Hequ}
\end{equation} 
and $u=u^{inc}+u^s+u_{guide}$, where $u^{inc}$ is the incident wave, $u^s$ is the scattering wave, and $u_{guide}$ is the possible guided mode which will be defined and explained later. See Figure \ref{pbdry} for a demonstration of the problem.
 \begin{figure}[ht]
	\centering{
		{\includegraphics[width=0.8\textwidth]{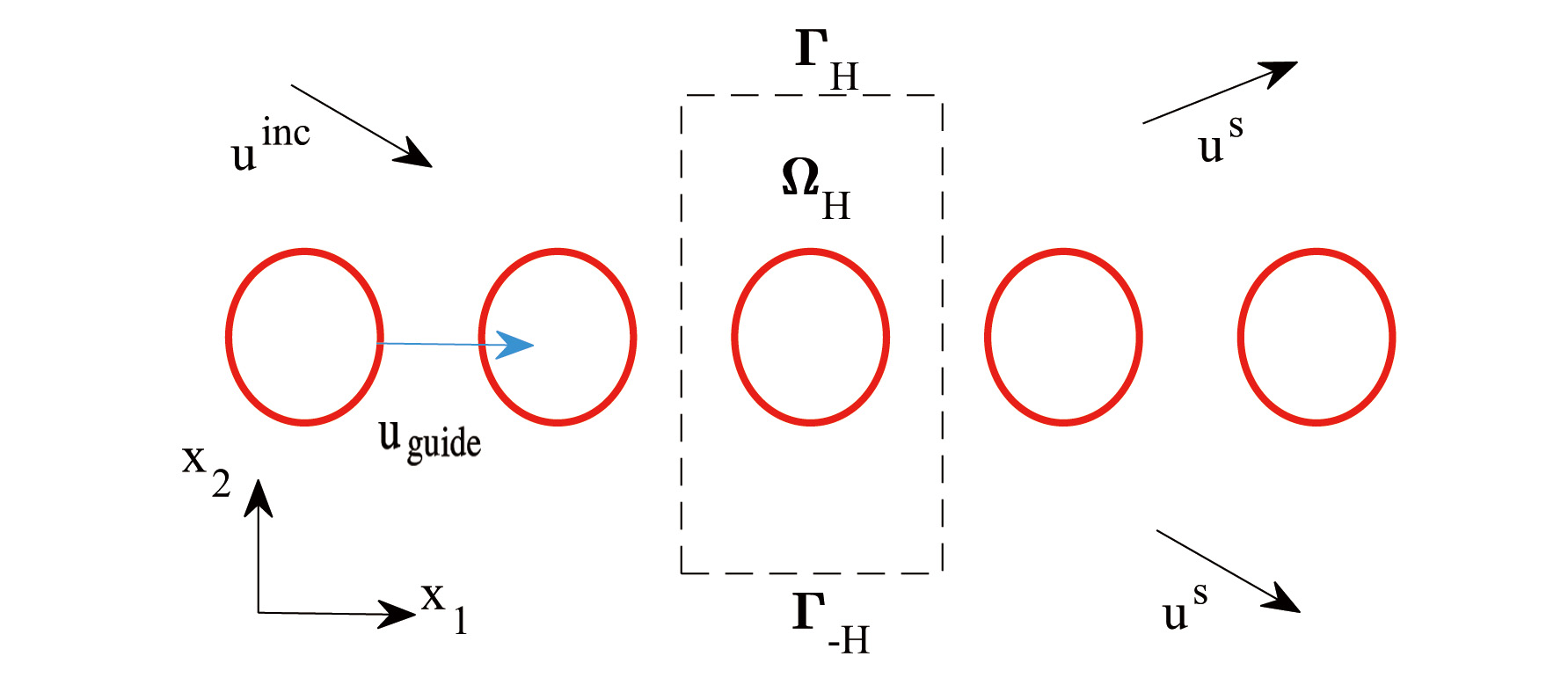}}
	}
	\caption{The description of the periodic structures and the guided mode.}\label{pbdry}
\end{figure}
 When $u^{inc}$ is $\alpha$-quasi-periodic, namely,
\begin{equation*}
	u^{inc}(x_{1}+\Lambda,x_{2})=e^{i\alpha\Lambda}u^{inc}(x_{1},x_{2}), 
\end{equation*}
one can expect the above equation to be uniquely solvable for some $u \in H^{1}_{\alpha, loc}(\mathbb{R}^{2})$, where this space consists of functions in $H^{1}_{loc}(\mathbb{R}^{2})$ that are $\alpha$-quasi-periodic in $x_1$. The solution $u$ is required to satisfy a Rayleigh radiation condition \cite{Bao2}. Specifically, for some $H > h_0>0$, it admits the following expansions: in the upper region $\Omega_{H}^{+} = \{ x=(x_1,x_2)\in\Omega \mid x_2 > H \}$,
    \begin{equation}
    u - u^{inc} = \sum_{n \in \mathbb{Z}} u^{+}_{n} e^{i\alpha_n x_1 + i\beta_n (x_2 - H)}, \label{uRay0}
    \end{equation}
    and in the lower region $\Omega_{H}^{-} = \{ x=(x_1,x_2)\in\Omega \mid x_2 < -H \}$,
    \begin{equation}
    u  = \sum_{n \in \mathbb{Z}} u^{-}_{n} e^{i\alpha_n x_1 - i\beta_n (x_2 + H)}. \label{lRay0}
    \end{equation}
Here
\begin{eqnarray*}
	\alpha_{n}=\alpha+\frac{2\pi}{\Lambda} n, n\in\mathbb{Z},~
	\beta_{n}=\sqrt{k^2-\alpha_{n}^{2}}.
\end{eqnarray*}
For the square root function, we take the branch cut along the negative imaginary axis. Throughout this paper, we denote by $B_{\alpha} := \{n \in \mathbb{Z} \mid |\alpha_{n}| \leq k\}$ and $U_{\alpha} := \{n \in \mathbb{Z} \mid |\alpha_{n}| > k\}$.
The uniqueness of the solution implies that $u_{guide}$ vanishes. However, as was shown in \cite{Kirsch1}, for certain $\gamma(x)$ and $k$ that satisfy our assumption, there are possibly $\alpha$s such that the corresponding Rayleigh radiating condition is not enough to guarantee the uniqueness of a solution to \eqref{Hequ}. These $\alpha$s are called propagative wave numbers, and the corresponding $u_{guide}$s are called guided modes which destroy the uniqueness; see the detailed definition in Definition \ref{def1}.

The mathematical analysis on the above scattering problem has attracted the attention of numerous researchers, whose main concern is to provide a suitable description of a radiation condition that guarantees the uniqueness of solution to the scattering problem while including the guided modes.
Starting from \cite{Fliss2}, using the limiting absorption principle with the Floquet-Bloch transformation, the characterization of guided modes in a closed waveguide problem was performed, while \cite{Kirsch1} was the first to apply the transformation to the case of a semi-open waveguide problem to derive the suitable scattering condition, and further development in the case of scattering by locally perturbed periodic structures can be found in \cite{Kirsch2,Kirsch3,Kirsch4,Fliss2,Fliss3,Lechleiter,Nosich}.

In the context of numerical algorithms, there has been a growing interest in developing reliable numerical methods for scattering by periodic and locally perturbed periodic structures. Examples include schemes based on the Floquet–Bloch transformation \cite{Lechleiter}, the volume integral equation method \cite{Furuya}, and the boundary integral equation method combined with a specially designed perfectly matched layer (PML) and a Neumann-to-Dirichlet (NtD) operator \cite{Yu}. In addition, recent work has focused on the design of PML methods for scattering by locally perturbed periodic structures \cite{Kirsch5}, where a carefully chosen contour is employed to avoid propagative wave numbers in the inverse Floquet transformation. However, since the PML approach analyzed in \cite{Kirsch5} circumvents the computation of propagative wave numbers, the accurate calculation of these wave numbers and their corresponding guided modes remains an important and open problem.

On the other hand, for the closed periodic waveguide problem, where a Dirichlet/Neumann boundary condition is imposed on the total wave $u$ in the $x_{2}$-direction, the computation of corresponding eigenmodes for $\alpha$s has been thoroughly studied. These eigenmodes are referred to as guided modes and have attracted significant interest both mathematically \cite{Kuchment1,Kuchment2} and numerically \cite{Engstrom1,Fliss1,Zhang3}. Furthermore, in the case of the locally perturbed close periodic waveguide, the numerical computation of guided waves was identified as a necessary step in \cite{Zhang2}. 

The main contribution of our work is therefore a numerical method dedicated to computing such propagative wave numbers in the open periodic waveguide problem. This is achieved by designing a suitable PML truncation that transforms the original open problem into a closed periodic waveguide problem, to which well-established numerical methods can be applied. Furthermore, a novel analysis of the approximation error introduced by the truncation is thoroughly presented.

To be more specific, the approximation analysis of the propagative wave numbers consists of two main ingredients. 
The first is the approximation analysis of the PML truncation in operator level. We prove the exponential convergence property of the PML truncation to the original propagative wavenumber problem by a novel analysis of the PML DtN operator. The second ingredient is the Gohberg–Sigal theory for operator-valued functions \cite{Ammari, Ammari2010}, which allows us to establish the perturbation property of the propagative values under PML truncation. 

The structure of the paper is as follows. In Section \ref{sect2}, we recall the definition of the propagative wave numbers and truncate the problem to a bounded domain, where the nonlinearity arising from the Rayleigh expansion is encapsulated by the Dirichlet-to-Neumann (DtN) operator. We further analyze the basic properties of the propagative wave numbers and the analyticity of the corresponding operator in this section. In Section \ref{sect3}, we circumvent the nonlinearity of the Rayleigh expansion by introducing a PML truncation and define a PML DtN operator, which allows us to reformulate the problem as a second-order eigenvalue problem. We also investigate the perturbation induced by the PML truncation at the operator level and establish an exponential decay property in the operator norm with respect to the PML parameter. In Section \ref{sect4}, we provide a detailed analysis of the multiplicity of the propagative wave numbers. Equipped with these tools, in Section \ref{sect5} we use the exponential decay of the operator perturbation to derive the exponential decay of the perturbation of the propagative wave numbers. A novel trace inequality is proved here, and the inspiration for this analysis is drawn from \cite{Ammari}, which deals with the spectral analysis of perturbations caused by small inclusions. In Section \ref{sect6}, we present a finite element approximation analysis of the truncated problem, completing the analysis of the discrete scheme. Finally, in Section \ref{sect7}, we report several numerical experiments to demonstrate the reliability and effectiveness of our proposed method.
\section{Problem formulation on a bounded domain}\label{sect2}

We first recall the definition of propagative wave numbers \cite{Kirsch1} (or the exceptional values in \cite{Kirsch2}).
\begin{definition}\label{def1}
	Given a wavenumber $k>0$ and $\Lambda$-periodic refractive index $\gamma(x)$, we call $\alpha\in[-\frac{\pi}{\Lambda},\frac{\pi}{\Lambda}]$ a \textbf{propagative wave number} if the following Helmholtz equation has a nonzero $\alpha$-quasi-periodic solution $u\in H^{1}_{\alpha,loc}(\mathbb{R}^{2})$,
	\begin{equation*}
		\Delta u + k^2 \gamma(x)u = 0. 
	\end{equation*} 
	which satisfies a Rayleigh propagation condition, namely, for some $H>h_0>0$, in $\Omega_{H}^{+}=\{x=(x_{1},x_{2})\in\Omega|x_{2}>H\}$
	\begin{equation}
		u(x_{1},x_{2}) = \sum_{n\in \mathbb{Z}}u^{+}_{n}e^{i\alpha_{n}x_{1}+i\beta_{n}(x_{2}-H)}, \label{uRay}
	\end{equation}
	and in $\Omega_{H}^{-}=\{x=(x_{1},x_{2})\in\Omega|x_{2}<-H\}$
	\begin{equation}
		u(x_{1},x_{2}) = \sum_{n\in \mathbb{Z}}u^{-}_{n}e^{i\alpha_{n}x_{1}-i\beta_{n}(x_{2}+H)}. \label{lRay}
	\end{equation}
	We call such $u$ the \textbf{guided mode} corresponding to $\alpha$.
\end{definition}
\subsection{Variational problem with DtN operator}
The set of all propagative wave numbers is denoted by $P(k)$. Let $\Gamma_{\pm H}=\lbrace(x_{1},x_{2}):-\frac{\Lambda}{2}<x_{1}<\frac{\Lambda}{2}, x_{2}=\pm H\rbrace$.  
To handle the problem in a bounded region, we first note that for any $u\in H^1_{\alpha,loc}(\mathbb{R}^2)$, the traces $u(x_{1},\pm H)$ are well-defined and belong to $H^{1/2}(\Gamma_{\pm H})$. Moreover, these traces are $\alpha$-quasi-periodic in the $x_{1}$ direction. We then define the DtN operators $T^\pm_\alpha: H^{1/2}(\Gamma_{\pm H})\rightarrow H^{-1/2}(\Gamma_{\pm H})$ by
\begin{eqnarray*}
	T^+_\alpha u=\sum_{n\in\mathbb{Z}}i\beta_{n}u_{n}^{+}e^{i\alpha_{n}x_{1}}, \, x\mbox{ on }\Gamma_H 
\end{eqnarray*}
and 
\begin{eqnarray*}
	T^-_\alpha u=\sum_{n\in\mathbb{Z}}i\beta_{n}u_{n}^{-}e^{i\alpha_{n}x_{1}},\, x\mbox{ on } \Gamma_{-H},
\end{eqnarray*}
with $u^{\pm}_{n}=\frac{1}{\Lambda}\int_{-\frac{\Lambda}{2}}^{\frac{\Lambda}{2}}u(x_1,\pm  H)e^{-i\alpha_n x_1}dx_1,n\in\mathbb{Z}$ the Fourier coefficients corresponding to $u(x_{1},\pm H)$. We denote $\Omega_H=\{(x_1,x_2)|-\frac{\Lambda}{2}<x_1<\frac{\Lambda}{2}, -H<x_2<H\}$. Let $H^1_\alpha(\Omega_H)=\{u|_{\Omega_H}| u\in H^1_{\alpha,loc}(\mathbb{R}^2)\}$ and $H^1_{per}(\Omega_H)=\{u\in H^1(\Omega_H)| u(-\Lambda/2,x_2)=u(\Lambda/2,x_2)\}$.
Then with the help of operators $T^{\pm}_\alpha$, 
we can formulate \eqref{Hequ}-\eqref{lRay} in $\Omega_{H}$ to the following weak problem: find $\alpha \in[-\frac{\pi}{\Lambda},\frac{\pi}{\Lambda}]$ and $0\neq u\in H^1_\alpha(\Omega_H)$, such that for every $v \in H^1_\alpha(\Omega_H)$, 
\begin{equation}
	\int_{\Omega_{H}}\big(\nabla u \cdot\nabla \bar{v} - k^{2}\gamma(x)u\bar{v}\big)dx -\int_{\Gamma_{H}}T_{\alpha}^{+}u\bar{v}ds-\int_{\Gamma_{-H}}T_{\alpha}^{-}u\bar{v}ds=0.\label{wk1}
\end{equation}
Since $u,v\in H^1_\alpha(\Omega_H)$, we can introduce $\phi,\psi \in H^1_{per}(\Omega_H)$ such that $u=\phi e^{i\alpha x_{1}},v=\psi e^{i\alpha x_{1}}$, let 
\begin{eqnarray*}
	&&a_{\alpha}(\phi,\psi)=\int_{\Omega_{H}}\big(\nabla \phi \cdot\nabla \bar{\psi} -2i\alpha\frac{\partial \phi}{\partial x_{1}}\bar\psi-(k^{2}\gamma(x)-\alpha^{2})\phi\bar{\psi}\big)dx\\
    &&\qquad\qquad\qquad-\sum_{n\in\mathbb{Z}}i\Lambda\beta_{n}(\phi^{+}_{n}\bar{\psi}^{+}_{n}+\phi^{-}_{n}\bar{\psi}^{-}_{n}). 
\end{eqnarray*}
Here, $\phi_{n}^{\pm}=\frac{1}{\Lambda}\int_{-\frac{\Lambda}{2}}^{\frac{\Lambda}{2}}\phi(x_1,\pm  H)e^{-i\frac{2\pi}{\Lambda}n x_1}dx_1,\psi_{n}^\pm=\frac{1}{\Lambda}\int_{-\frac{\Lambda}{2}}^{\frac{\Lambda}{2}}\psi(x_1,\pm  H)e^{-i\frac{2\pi}{\Lambda}n x_1}dx_1,n\in\mathbb{Z}$ are the Fourier coefficients corresponding to the trace of $\phi,\psi$ on $\Gamma_{\pm H}$. Thus, our problem of computing propagative wave numbers can be recast as follows: find $\alpha\in[-\frac{\pi}{\Lambda},\frac{\pi}{\Lambda}]$  and $0\neq \phi\in H^1_{per}(\Omega_H)$ such that
\begin{equation}
	a_\alpha(\phi,\psi)=0, \forall \psi\in H^1_{per}(\Omega_H).\label{oripblm}
\end{equation}
Further, we define an inner product on $H^{1}_{per}(\Omega_{H})$ by
\begin{equation}
	(\phi,\psi)_{*}=\int_{\Omega_{H}}\nabla \phi\cdot \nabla \bar{\psi} dx + 2\pi\sum_{n\in\mathbb{Z}}\sqrt{n^{2}+1}(\phi_{n}^{+}\bar{\psi}^{+}_{n}+\phi^{-}_{n}\bar{\psi}^{-}_{n}).\label{starnorm}
\end{equation}
and denote by $\|\cdot\|_*$ the norm induced by the inner product. Analogous to Lemma 2.3 in \cite{ChenWu}, we can show that there exists a constant $C_{1,*}>0$ such that  
$$
\Vert \phi \Vert_* \le C_{1,*} \Vert \phi \Vert_{H^1(\Omega_H)}.
$$  
Define the Fourier coefficient  
$$
\phi_n(x_2)=\frac{1}{\Lambda}\int_{-\frac{\Lambda}{2}}^{\frac{\Lambda}{2}}\phi(x_1,x_2)e^{-i\frac{2\pi}{\Lambda}n x_1}dx_1,
$$  
then we have  
\begin{align*}
   |\phi_n^{\pm}|^2 &=|\phi_n(x_2)|^2+\int_{x_2}^{\pm H}\frac{d}{dx_2}|\phi_n(s)|^2 ds \\
   &\ge  |\phi_n(x_2)|^2-\int_{-H}^H 2|\phi_n(x_2)|\left|\frac{d}{dx_2}\phi_n(x_2)\right|dx_2. 
\end{align*}
Integrating both sides with respect to $x_2$ over $[-H,H]$ yields  
\begin{align*}
    |\phi_n^{\pm}|^2
   &\ge \int_{-H}^H\bigg( \frac{1}{2H}|\phi_n(x_2)|^2- 2|\phi_n(x_2)|\left|\frac{d}{dx_2}\phi_n(x_2)\right|\bigg)dx_2 \\ 
   & \ge  \int_{-H}^H\bigg( \frac{1}{4H}|\phi_n(x_2)|^2- 4H\left|\frac{d}{dx_2}\phi_n(x_2)\right|^2\bigg)dx_2.
\end{align*}
Consequently, there exist positive constants $C_{2,*}^{0}$ and $C_{2,*}^{1}>1$ such that  
\begin{align*}
  & \quad \, \, 2\pi \sum_{n \in \mathbb{Z}}\sqrt{1+n^2}\big(|\phi_n^+|^2+|\phi_n^-|^2\big) \\
   &\ge  4\pi\sum_{n \in \mathbb{Z}}\int_{-H}^H\bigg( \frac{1}{4H}|\phi_n(x_2)|^2- 4H\left|\frac{d}{dx_2}\phi_n(x_2)\right|^2\bigg)dx_2   \\
   & \ge  C_{2,*}^{0} \Vert \phi \Vert_{L^2(\Omega_H)} -C_{2,*}^{1} \Vert \nabla \phi \Vert_{L^2(\Omega_H)}.
\end{align*}
Therefore,  
$$
\Vert \phi \Vert_*^2 \ge \frac{1}{2} \Vert \nabla \phi \Vert_{L^2(\Omega_H)}^2+\frac{C_{2,*}^{0}}{2C_{2,*}^{1}} \Vert \phi \Vert_{L^2(\Omega_H)}.
$$  
Thus the norm $\Vert \cdot \Vert_*$ is equivalent to the standard Sobolev norm $\Vert \cdot \Vert_{H^1(\Omega_H)}$. For a normed linear space $ \mathcal{H} $, we denote by $ \mathcal{L}(\mathcal{H}) $ the space of all bounded linear operators from $ \mathcal{H} $ to itself. We consider the Banach space of all bounded linear operators $\mathcal{L}(H^1_{per}(\Omega_{H}))$, equipped with the operator norm 
induced by $\Vert \cdot\Vert _{*}$. Now we have
\begin{theorem}\label{Thm:1}
	For $k > 0$ and $\alpha \notin 
\bigl\{z \in \mathbb{C} \mid 
\Re(k^{2} - (z + \frac{2\pi n}{\Lambda})^{2}) = 0,\,
\Im(k^{2} - (z + \frac{2\pi n}{\Lambda})^{2}) < 0 \text{ for some } n \in \mathbb{Z}\bigr\}$, there is  a compact operator $K(\alpha)$ in $\mathcal{L}(H^1_{per}(\Omega_{H}))$, such that for any $\phi,\psi\in H^{1}_{per}(\Omega_{H})$,
	\begin{equation*}
		a_{\alpha}(\phi,\psi)=(D(\alpha)\phi,\psi)_{*}=((I-K(\alpha))\phi,\psi)_{*}.
	\end{equation*}
Here $I$ denotes the identity operator. 
\end{theorem}
\begin{proof}
	From the definition of $a_{\alpha}(\phi,\psi)$, we readily to obtain that 
	\begin{eqnarray*}
		a_{\alpha}(\phi,\psi)&=&\int_{\Omega_{H}}\nabla \phi \cdot\nabla \bar{\psi} dx+2\pi\sum_{n\in\mathbb{Z}}\sqrt{n^2+1} (\phi_{n}^{+}\bar\psi_{n}^{+}+\phi_{n}^{-}\bar{\psi}_{n}^{-})\\
		&-&\int_{\Omega_{H}}\big(2i\alpha\frac{\partial \phi}{\partial x_{1}}\bar\psi+(k^{2}\gamma(x)-\alpha^{2})\phi\bar{\psi}\big)dx\\
		&-&\sum_{n\in\mathbb{Z}}(i\Lambda\beta_{n}+2\pi\sqrt{n^2+1} )\phi^{+}_{n}\bar{\psi}^{+}_{n}-\sum_{n\in\mathbb{Z}}(i\Lambda\beta_{n}+2\pi\sqrt{n^2+1} )\phi^{-}_{n}\bar{\psi}^{-}_{n}.
	\end{eqnarray*}
	Let
	\begin{eqnarray}
		&&a^1_\alpha(\phi,\psi)=\int_{\Omega_{H}}\nabla \phi \cdot\nabla \bar{\psi} dx+2\pi\sum_{n\in\mathbb{Z}}\sqrt{n^2+1} (\phi_{n}^{+}\bar\psi_{n}^{+}+\phi_{n}^{-}\bar{\psi}_{n}^{-}),\label{weaka1}\\
		&&    a^2_{\alpha}(\phi,\psi)=\int_{\Omega_{H}}\big(2i\alpha\frac{\partial \phi}{\partial x_{1}}\bar\psi+(k^{2}\gamma(x)-\alpha^{2})\phi\bar{\psi}\big)dx,\label{weaka2}\\
		&&a^3_\alpha(\phi,\psi)=\sum_{n\in\mathbb{Z}}(i\Lambda\beta_{n}+2\pi\sqrt{n^2+1})(\phi^{+}_{n}\bar{\psi}^{+}_{n}+\phi^{-}_{n}\bar{\psi}^{-}_{n}).\label{weaka3}
	\end{eqnarray}
	Clearly, $a_\alpha(\phi,\psi)=a^1_\alpha(\phi,\psi)-a^2_\alpha(\phi,\psi)-a^3_{\alpha}(\phi,\psi)$ and $a^{1}_{\alpha}(\phi,\psi)=(\phi,\psi)_*$. Next, using integration by parts, we have
    \begin{align*}
	a^2_{\alpha}(\phi,\psi)&=\int_{\Omega_{H}}\big(2i\alpha\frac{\partial \phi}{\partial x_{1}}\bar\psi+(k^{2}\gamma(x)-\alpha^{2})\phi\bar{\psi}\big)dx\\&=\int_{\Omega_{H}}\big(-2i\alpha\frac{\partial \bar\psi}{\partial x_{1}}\phi+(k^{2}\gamma(x)-\alpha^{2})\phi\bar{\psi}\big)dx.    
	\end{align*}
  By the compact embedding of $H^1_{per}(\Omega)\rightarrow L_{per}^2(\Omega_{H})$, we first obtain the embedding operator $E_{1}$, which is a compact operator, and consider $\hat{\phi}=E_{1}\phi$. Thus, for $\hat{\phi}$, we define the linear functional $L_{\hat{\phi}}(\psi)=a_{\alpha}^2(\phi,\psi)$. It is a bounded linear functional on $H^1_{per}(\Omega_{H})$ with respect to the norm $\Vert \cdot\Vert _{*}$. Thus, by the Riesz representation theorem, there exists a unique $\phi^{*}\in H^1_{per}(\Omega_{H})$ such that
	\begin{equation*}
		L_{\hat{\phi}}(\psi)=(\phi^{*},\psi)_{*}, \forall \psi\in H^{1}_{per}(\Omega_{H}).
	\end{equation*}
	Then, we are led to define $E^{*}(\alpha):L^2_{per}(\Omega_{H})\rightarrow H^{1}_{per}(\Omega_{H})$ given by $E^{*}(\alpha)\hat{\phi}=\phi^{*}$. This is a bounded operator since
	$|a^{2}_{\alpha}(\phi,\psi)|\leq C\Vert \hat\phi\Vert _{L^2(\Omega_H)}\Vert \psi\Vert _{*}$. So we define $E(\alpha)=E^{*}(\alpha)E_1$, and it is a compact operator $H^1_{per}(\Omega_{H})\rightarrow H^{1}_{per}(\Omega_{H})$.
	
	For $a^{3}_{\alpha}(\phi,\psi)$, since
	\begin{align*}
	    	&|i\Lambda\beta_{n}+2\pi\sqrt{n^2+1}|=
		\Lambda\frac{|(\frac{2\pi}{\Lambda})^2+k^2-\alpha^2-\frac{4\pi\alpha}{\Lambda}n|}{|-i\sqrt{k^2-(\frac{2\pi}{\Lambda}n+\alpha)^2}+  \frac{2\pi}{\Lambda}\sqrt{n^2+1}|} \\
        &\qquad=\Lambda\frac{|\big((\frac{2\pi}{\Lambda})^2+k^2-\alpha^2\big)\frac1n-\frac{4\pi\alpha}{\Lambda}|}{|-i\sqrt{(\frac kn) ^2-(\frac{2\pi}{\Lambda}+\frac{\alpha}{n})^2}+  \frac{2\pi}{\Lambda}\sqrt{1+\frac{1}{n^2}}|}
       \to \Lambda |\alpha| \quad (|n|\to \infty),
	\end{align*}
    there is a positive constant $C_0$ such that 
	$|i\Lambda\beta_{n}+2\pi\sqrt{n^2+1}|\leq C_0, \forall n\in \mathbb{Z}$.  We further consider the truncation of $a^{3}_{\alpha}(\phi,\psi)$. For $N\in \mathbb{Z}^+$, 
	\begin{eqnarray*}
		a^3_{\alpha,N}(\phi,\psi)=
		\sum_{|n|<N}(i\Lambda\beta_{n}+2\pi\sqrt{n^2+1})(\phi^{+}_{n}\bar{\psi}^{+}_{n}+\phi^{-}_{n}\bar{\psi}^{-}_{n}).
	\end{eqnarray*}
	It is clear that
	\begin{equation*}
		|a^3_{\alpha,N}(\phi,\psi)|\leq C \Vert \phi\Vert _{*}\Vert \psi\Vert _{*}.
	\end{equation*}
	By Riesz theory, there is a finite-rank operator $L_{N}(\alpha)\in\mathcal{L}(H^1_{per}(\Omega_{H}))$, such that 
	\begin{equation*}
		a^3_{\alpha,N}(\phi,\psi)=(L_{N}(\alpha)\phi,\psi)_{*}.
	\end{equation*}
	Since
	\begin{eqnarray*}
		&&|a^3_{\alpha,N}(\phi,\psi)-a^3_{\alpha}(\phi,\psi)|=|\sum_{|n|\geq N}(i\Lambda\beta_{n}+2\pi\sqrt{n^2+1})(\phi^{+}_{n}\bar{\psi}^{+}_{n}+\phi^{-}_{n}\bar{\psi}^{-}_{n})|\\
		&&\qquad \le \sum_{|n|\geq N}\frac{|i\Lambda\beta_{n}+2\pi\sqrt{n^2+1}|}{2\pi\sqrt{1+n^2}}2\pi\sqrt{1+n^2}|\phi^{+}_{n}\bar{\psi}^{+}_{n}+\phi^{-}_{n}\bar{\psi}^{-}_{n}|\\
		&&\qquad\leq \frac{C}{N}\Vert \phi\Vert _{*}\Vert \psi\Vert _{*},
	\end{eqnarray*}
	and the limit of compact operators is also compact, it follows that there is a compact operator $L(\alpha)\in\mathcal{L}(H^1_{per}(\Omega_{H}))$, such that
	\begin{equation*}
		a_{\alpha}^{3}(\phi,\psi)=(L(\alpha)\phi,\psi)_{*}.
	\end{equation*} 
    Finally, letting $K(\alpha)=E(\alpha)+L(\alpha)$, we obtain the compact operator as asserted.
\end{proof}
\subsection{Properties of the propagative wave numbers}
To further study the properties of the propagative wave numbers, we extend $D(\alpha)$ to a neighborhood of $[-\frac{\pi}{\Lambda},\frac{\pi}{\Lambda}]$ in $\mathbb{C}$. Here, to ensure the well-posedness and good analytical properties of the problem, it is common in the literature on quasi-periodic scattering to exclude certain cutoff values \cite{Kirsch1} in order to avoid pathological solutions to the linear equations. The cut-off values are defined by 
\begin{equation*}
	\mathcal{A}(k)=\lbrace \alpha\in [-\frac{\pi}{\Lambda},\frac{\pi}{\Lambda}]|k=|\alpha+\frac{2\pi}{\Lambda} m|,\,\textit{for some m}\in\mathbb{Z}\rbrace.
\end{equation*}
We follow the same approach and adopt the following assumption.
\begin{assume}\label{keyasump}
	The cut-off values are not propagative wave numbers, at $\hat{\alpha}\in \mathcal{A}(k)$, $\dim \operatorname{Ker}(D(\hat{\alpha}))$=0.
\end{assume}
Under this assumption, the following properties of the propagative wave numbers hold and have been proved in \cite{Kirsch2}.
\begin{lemma}\label{finiteness}
	Under Assumption \ref{keyasump}, we have
	\begin{enumerate}
		\item There exists an open set $\mathcal{W}\subset \mathbb{C}$, with $[-\frac{\pi}{\Lambda},\frac{\pi}{\Lambda}]\setminus \mathcal{A}(k)\subset \mathcal{W}$ that consists of at most three connected components, such that the operator $D(\alpha)$ depends analytically and has a finite dimensional kernel at $\alpha\in \mathcal{W}$.
		\item There exist at most finitely many propagative wave numbers of $D(\alpha)$ on $[-\frac{\pi}{\Lambda},\frac{\pi}{\Lambda}]$. Thus we denote by $P(k)$ the finite set of all propagative wave numbers on $[-\frac{\pi}{\Lambda},\frac{\pi}{\Lambda}]$. 
	\end{enumerate}
\end{lemma}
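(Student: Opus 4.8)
The plan is to read off both assertions from the decomposition $D(\alpha)=T(\alpha)-K(\alpha)=I-K(\alpha)$ supplied by Theorem~\ref{Thm:1} — here $T(\alpha)$ is the identity, since $a^1_\alpha$ is literally the inner product $(\cdot,\cdot)_*$ and in particular does not depend on $\alpha$ — combined with analytic Fredholm theory. The first task is to build the domain $D$. Tracking the $\alpha$-dependence through the proof of Theorem~\ref{Thm:1}: $T(\alpha)=I$ is constant; $E(\alpha)$ comes from $a^2_\alpha$, whose integrand is (after the integration by parts) a quadratic polynomial in $\alpha$, so $\alpha\mapsto E(\alpha)$ is an entire operator-valued polynomial; and $L(\alpha)$ is an operator-norm limit of the finite-rank operators $L_N(\alpha)$ whose "entries" are the numbers $c_n(\alpha):=i\beta_n(\alpha)+\frac{2\pi}{\Lambda}\sqrt{n^2+1}$. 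The only non-entire ingredient is $i\beta_n$, which up to a branch choice is the analytic function $\alpha\mapsto i\sqrt{k^2-\alpha_n^2}$ with square-root branch points exactly at $\alpha=\pm k-\frac{2\pi}{\Lambda}n$; the branch points lying in $[-\frac{\pi}{\Lambda},\frac{\pi}{\Lambda}]$ are precisely $\mathcal{A}(k)$, and a short count of the two arithmetic progressions $\pm k+\frac{2\pi}{\Lambda}\mathbb{Z}$ shows $\mathcal{A}(k)$ meets $[-\frac{\pi}{\Lambda},\frac{\pi}{\Lambda}]$ in at most two points. I would then take $D$ to be a thin complex neighbourhood of $[-\frac{\pi}{\Lambda},\frac{\pi}{\Lambda}]$ from which a short vertical segment through each point of $\mathcal{A}(k)$ is deleted, the thinness chosen so small that no other branch point $\pm k-\frac{2\pi}{\Lambda}n$ enters the neighbourhood; then $D$ falls into at most three simply connected components, one around each subinterval of $[-\frac{\pi}{\Lambda},\frac{\pi}{\Lambda}]\setminus\mathcal{A}(k)$, and on each component a single-valued analytic branch of every $\sqrt{k^2-\alpha_n^2}$ is available.

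With $D$ fixed, analyticity of $D(\alpha)$ on $D$ is then routine: each $L_N(\alpha)$ is a finite sum of rank-$\le 2$ operators with analytic coefficients $c_n(\alpha)$, hence holomorphic in $\alpha$, and the tail estimate from the proof of Theorem~\ref{Thm:1}, $\|L(\alpha)-L_N(\alpha)\|_*\le C/N$, is uniform for $\alpha$ in compact subsets of $D$ (the constant bounding $|c_n(\alpha)|$ for $n\in U_\alpha$ depends only on $k$ and a bound on $|\alpha|$); a locally uniform limit of holomorphic operator-valued functions is holomorphic, so $L$, and therefore $D(\alpha)=I-E(\alpha)-L(\alpha)$, is analytic on $D$. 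Since $K(\alpha)=E(\alpha)+L(\alpha)$ is compact, the Fredholm alternative gives $\dim\mathrm{Ker}\,D(\alpha)<\infty$ for every $\alpha\in D$. This is assertion~(1).

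For assertion~(2) I would apply the analytic Fredholm theorem to $D(\alpha)=I-K(\alpha)$ on each connected component $D_j$ of $D$: either $D(\alpha)$ is non-invertible at every point of $D_j$, or the set where it fails to be invertible is discrete in $D_j$. To exclude the first alternative in every $D_j$, observe that $\mathcal{A}(k)\neq\emptyset$ for any $k>0$ and that each subinterval of $[-\frac{\pi}{\Lambda},\frac{\pi}{\Lambda}]\setminus\mathcal{A}(k)$ has an endpoint $\hat\alpha\in\mathcal{A}(k)$; since $c_n(\alpha)\to c_n(\hat\alpha)$ as $\alpha\to\hat\alpha$ (the square root is continuous at $0$), one gets $\|D(\alpha)-D(\hat\alpha)\|_*\to0$, and by Assumption~\ref{keyasump} $D(\hat\alpha)$ is injective, hence — being $I$ minus a compact operator — invertible, so invertibility propagates to nearby $\alpha\in D_j$. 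Therefore the second alternative holds in every $D_j$ and the set $N:=\{\alpha\in D:D(\alpha)\ \text{not invertible}\}$ is discrete in $D$. For real non-cut-off $\alpha$ one has: $\alpha$ is a propagating value $\iff\mathrm{Ker}\,D(\alpha)\neq\{0\}\iff\alpha\in N$, while cut-off values are excluded by Assumption~\ref{keyasump}; hence $P(k)\subset N\cap[-\frac{\pi}{\Lambda},\frac{\pi}{\Lambda}]$. It remains to rule out accumulation of $P(k)$ at a cut-off value $\hat\alpha$: if $\alpha_j\to\hat\alpha$ with $0\neq\phi_j\in\mathrm{Ker}\,D(\alpha_j)$ and $\|\phi_j\|_*=1$, then $\phi_j=K(\alpha_j)\phi_j$, and $\|K(\alpha_j)-K(\hat\alpha)\|_*\to0$ together with the compactness of $K(\hat\alpha)$ forces a subsequence $\phi_{j_l}\to\phi$ with $\|\phi\|_*=1$ and $\phi=K(\hat\alpha)\phi$, i.e.\ $0\neq\phi\in\mathrm{Ker}\,D(\hat\alpha)$, contradicting Assumption~\ref{keyasump}. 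So $P(k)$ has no accumulation point in the compact set $[-\frac{\pi}{\Lambda},\frac{\pi}{\Lambda}]$ and is finite.

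I expect the main obstacle to be the first step: organizing the branch-cut structure of the functions $\sqrt{k^2-\alpha_n^2}$ into one domain $D$ that simultaneously (i) contains all of $[-\frac{\pi}{\Lambda},\frac{\pi}{\Lambda}]\setminus\mathcal{A}(k)$, (ii) avoids every branch cut so that $L(\alpha)$ is analytic, and (iii) has the stated number of components; the vertical-slit construction at the cut-off values is precisely what reconciles (i)--(iii). A secondary technical point is checking that the tail bound of Theorem~\ref{Thm:1} is locally uniform in the now-complex parameter $\alpha$, which is what lets analyticity pass through the limit $L_N\to L$; and in part~(2) one must be careful to use Assumption~\ref{keyasump} twice and for two different purposes — once to kill the "nowhere invertible" branch of the analytic Fredholm dichotomy, and once to prevent $P(k)$ from piling up at a cut-off value.
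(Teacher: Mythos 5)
Your route coincides with the paper's in both parts: part (1) is obtained by extending the $\beta_n$ analytically to a complex neighbourhood of $[-\frac{\pi}{\Lambda},\frac{\pi}{\Lambda}]\setminus\mathcal{A}(k)$ that avoids the square-root branch cuts, noting $T(\alpha)=I$ and passing analyticity through the locally uniform limit $L_N(\alpha)\to L(\alpha)$, with the finite-dimensional kernel coming from compactness of $K(\alpha)$; part (2) rests on the analytic Fredholm dichotomy, which is exactly what the paper invokes through Theorem 5.1 of Gohberg--Krein (constancy of the number of independent solutions off a discrete set) before contradicting Assumption \ref{keyasump} at the cut-off boundary point. In fact your part (2) is more complete than the printed one: the paper's terse final step (``at the boundary point of $D_{1}$, $D(\alpha)$ cannot be one-to-one'') is precisely your compactness argument ($\phi_j=K(\alpha_j)\phi_j$ with $\|\phi_j\|_*=1$, extract a convergent subsequence using $\|K(\alpha_j)-K(\hat\alpha)\|_*\to 0$ and compactness of $K(\hat\alpha)$), which you spell out, and you also make explicit the second use of Assumption \ref{keyasump} needed to exclude the ``nowhere invertible'' branch of the dichotomy.

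One technical slip in your construction of $D$: with the square root cut along the negative imaginary axis, the set where $\alpha\mapsto\sqrt{k^2-\alpha_m^2}$ fails to be analytic near a cut-off value $\hat\alpha$ is not the vertical segment through $\hat\alpha$ but the curve on which $k^2-\alpha_m^2$ lies on the closed negative imaginary ray; writing $\alpha=\hat\alpha+x+iy$ with $\alpha_m(\hat\alpha)=k$, this curve is $x=-k+\sqrt{k^2+y^2}\approx y^2/(2k)$, $y\ge 0$, which is tangent to, but not contained in, your deleted segment, so the slit domain you describe still meets the branch cut no matter how thin the surrounding neighbourhood is. This is harmless and easily repaired --- delete a thin sliver containing that curve, or do as the paper does and take $D$ to be the union of balls of radius $\rho/3$ centred at the real non-cut-off points, on which the sign conditions $\Re(k^2-\alpha_n^2)>0$ (for $n\in B_{\hat\alpha}$) and $\Re(\alpha_n^2-k^2)>0$ (otherwise) keep $k^2-\alpha_n^2$ off the cut --- but as literally stated your $D$ is not yet a domain of analyticity. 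Everything else, including the count of at most two cut-off points yielding at most three components and the locally uniform tail estimate that lets analyticity survive the limit, matches the paper's argument.
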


Let $B_{\delta}(\alpha)$ be the disk in the complex plane centered at $\alpha$ with radius $\delta$. By Lemma \ref{finiteness}, we further have the following result.
\begin{corollary}\label{analyprop}
Under Assumption 1, for each $\alpha_{j}\in P(k)$ with $j\in\mathcal{J}$ (where $\mathcal{J}$ is an index set), there exists $\delta_{j}\in (0, k)$ such that $\overline{B_{\delta_{j}}(\alpha_{j})} \cap \mathcal{A}(k)=\emptyset$. The operator $D(\alpha)$ is holomorphic and has a finite dimensional kernel for $\alpha\in \overline{B_{\delta_{j}}(\alpha_{j})}$. Moreover, $D(\alpha)$ is invertible for all $\alpha\in \overline{B_{\delta_{j}}(\alpha_{j})}\setminus \{\alpha_{j}\}$.
\end{corollary}
\section{Truncation by perfectly matched layer}\label{sect3}

\subsection{The PML formulation}
Since the major nonlinearity in the calculation of propagative wave numbers due to the Rayleigh expansion
is reflected by the DtN operator, we consider the introduction of perfectly matched layer to truncate the unbounded domain and arrive at a polynomial eigenvalue problem. The perfectly matched layer technique, see \cite{ChenWu} for instance, under the case of quasi-periodic scattering is formally a complex coordinate stretching in the $x_{2}$-direction. Namely, along the $x_{2}$-direction, we introduce two PML layers with length $\delta$ outside $\Omega_{H}$. The density of the PML is given by $s(x_{2})=s_{1}(x_{2})+is_{2}(x_{2})$, which satisfies that
\begin{equation*}
	s_{1},s_{2}\in C(\mathbb{R}),\quad s_{1}\geq 1,s_{2}\geq 0,\textit{ such that }s(x_{2})=1, -H\leq x_{2}\leq H .
\end{equation*}
Thus, the PML layers are given by 
\begin{eqnarray*}
	&        \Omega_{+}^{PML}=\lbrace(x_{1},x_{2}):-\frac{\Lambda}{2}<x_{1}<\frac{\Lambda}{2},\textit{ and }H\leq x_{2}\leq H+\delta\rbrace,\\
	&        \Omega_{-}^{PML}=\lbrace(x_{1},x_{2}):-\frac{\Lambda}{2}<x_{1}<\frac{\Lambda}{2},\textit{ and }-H-\delta\leq x_{2}\leq -H\rbrace,
\end{eqnarray*}
Now we introduce the PML differential operator $\mathcal{L}$ via
\begin{equation*}
	\mathcal{L}:=\frac{\partial}{\partial x_{1}}s(x_{2})\frac{\partial}{\partial x_{1}}+\frac{\partial}{\partial x_{2}}\frac{1}{s(x_{2})}\frac{\partial}{\partial x_{2}}+s(x_{2})k^{2}\gamma(x).
\end{equation*}
Let $\tilde{\Omega}_{H}=\Omega_{H}\cup\Omega^{PML}_{+}\cup\Omega^{PML}_{-}$, $\Gamma_1=\lbrace(x_{1},x_{2}):-\frac{\Lambda}{2}<x_{1}<\frac{\Lambda}{2}, x_{2}=H+\delta\rbrace$ and $\Gamma_2=\lbrace(x_{1},x_{2}):-\frac{\Lambda}{2}<x_{1}<\frac{\Lambda}{2}, x_{2}=-H-\delta\rbrace$, we obtain the PML equation,
\begin{equation}
	\mathcal{L}u=0 \, \, in \, \, \Omega.\label{PMLde}
\end{equation}
Further, we denote by $X(\tilde{\Omega}_{H})$ the following space
\begin{equation*}
	X(\tilde{\Omega}_{H}):=\lbrace w \in H_{\alpha}^{1}(\tilde{\Omega}_{H})|w=0 \text{ on }\Gamma_{1}\cup \Gamma_{2} \rbrace.
\end{equation*}
The weak formulation for the propagative wave numbers problem with PML truncation thus reads as follows: Find $\alpha \in \mathbb{C}$ and $0\neq u\in X(\tilde{\Omega}_{H})$ such that for all $v\in X(\tilde{\Omega}_{H})$, 
\begin{equation}
	a_{\tilde{\Omega}_{H}}(u,v)=\int_{\tilde{\Omega}_{H}}\big(s(x_{2})\frac{\partial u}{\partial x_{1}} \cdot\frac{\partial \bar{v}}{\partial x_{1}}+\frac{1}{s(x_{2})}\frac{\partial u}{\partial x_{2}} \cdot\frac{\partial \bar{v}}{\partial x_{2}} - s(x_{2})k^{2}\gamma(x)u\bar{v}\big)dx=0.\label{IMPplm}
\end{equation}
Next, we will introduce an explicit weak formulation for the propagative wave numbers problem. Let $\phi\in X_{per}(\tilde{\Omega}_{H})$, where $X_{per}(\tilde{\Omega}_{H}):=\{w\in H^1_{per}(\tilde{\Omega}_{H})| w=0 \text{ on }\Gamma_1\cup\Gamma_2\}$, such that if $u=\phi e^{i\alpha x_{1}}$ for $\phi \in X_{per}(\tilde{\Omega}_H)$, then $u\in X(\tilde\Omega_H)$. Substituting $u$ into the PML equation \eqref{PMLde}, we have the following
differential equation for $\phi$ in $\tilde{\Omega}_{H}$:
\begin{equation}
	\frac{\partial}{\partial x_{1}}s(x_{2})\frac{\partial}{\partial x_{1}}\phi+\frac{\partial}{\partial x_{2}}\frac{1}{s(x_{2})}\frac{\partial}{\partial x_{2}}\phi+2i\alpha s(x_{2}) \frac{\partial \phi}{\partial x_{1}}+s(x_{2})(k^{2}\gamma-\alpha^2)\phi=0.\label{phiPML0}
\end{equation}
We now introduce the following sesquilinear form
\begin{eqnarray*}
	&&a_{\alpha,\tilde{\Omega}_{H}}(\phi,\psi)=\int_{\tilde{\Omega}_{H}}\big(s(x_{2})\frac{\partial \phi}{\partial x_{1}} \cdot\frac{\partial \bar{\psi}}{\partial x_{1}} +\frac{1}{s(x_{2})}\frac{\partial \phi}{\partial x_{2}} \cdot\frac{\partial \bar{\psi}}{\partial x_{2}} \nonumber\\
	&&\qquad\qquad-2i\alpha s(x_{2})\frac{\partial \phi}{\partial x_{1}}\bar\psi-s(x_{2})
	(k^{2}\gamma(x)-\alpha^{2})\phi\bar{\psi}\big)dx,
	\text{ for }\phi,\psi\in X_{per}(\tilde\Omega_H).
\end{eqnarray*}
The corresponding propagative wave number problem then reads: find $\alpha\in\mathbb{C}$, such that there is $\phi\in X_{per}(\tilde{\Omega}_{H})$, $\phi\neq 0$, which satisfies:
\begin{equation}
	a_{\alpha,\tilde{\Omega}_{H}}(\phi,\psi)=0,\forall \psi\in X_{per}(\tilde{\Omega}_{H}).\label{Dpblm}
\end{equation}
This is simply a quadratic eigenvalue problem. Thus, the original non-polynomial eigenvalue problem has been reduced to a polynomial one. To analyze the perturbation of the eigenvalues induced by the PML truncation, we further consider the PML equation in the region $\tilde{\Omega}_{H}\setminus\Omega_{H}$. Since $\gamma(x)=1$, the equation \eqref{phiPML0} for $x \in \tilde{\Omega}_{H}\setminus \Omega_{H}$ becomes
\begin{equation}
	\frac{\partial}{\partial x_{1}}s(x_{2})\frac{\partial}{\partial x_{1}}\phi+\frac{\partial}{\partial x_{2}}\frac{1}{s(x_{2})}\frac{\partial}{\partial x_{2}}\phi+2i\alpha s(x_{2}) \frac{\partial \phi}{\partial x_{1}}+s(x_{2})(k^{2}-\alpha^2)\phi=0.\label{phiPML}
\end{equation}
 Since $\phi$ is periodic in the $x_{1}$ direction, we consider its Fourier series expansion, $\phi(x_{1},x_{2})=\sum\phi_{n}(x_{2})e^{i\frac{2n\pi}{\Lambda}x_{1}}$, where $\phi_n(x_2)=\frac{1}{\Lambda}\int^{\Lambda/2}_{-\Lambda/2}\phi(x_1,x_2)e^{i\frac{2n\pi}{\Lambda}x_{1}}dx_1$. Substituting this expansion into \eqref{phiPML}, we obtain for $x\in \Omega^{PML}_+\cup\Omega^{PML}_-$ that 
\begin{equation*}
	\sum_{n\in\mathbb{Z}}(\frac{\partial}{\partial x_{2}}\frac{1}{s(x_{2})}\frac{\partial}{\partial x_{2}}\phi_{n}(x_{2})+s(x_{2})(k-(\alpha+\frac{2\pi n}{\Lambda}))^2\phi_{n}(x_{2}))e^{i\frac{2n\pi}{\Lambda}x_{1}}=0.
\end{equation*}
Thus $\phi(x_{1},x_{2})$ for $x\in \Omega^{PML}_{+}\cup \Omega^{PML}_{-}$ can be written as, for some $A_{n}^\pm,B_{n}^\pm$, 
\begin{eqnarray*}
	\phi(x_{1},x_{2})=\left\{\begin{array}{ll}\sum\limits_{n\in\mathbb{Z}}(A_{n}^{+}e^{i\beta_{n}\int_{x_{2}}^{\delta+ H}s(\tau)d\tau}+B_{n}^{+}e^{-i\beta_{n}\int_{x_{2}}^{\delta+ H}s(\tau)d\tau})e^{i\frac{2n\pi}{\Lambda}x_{1}},&x_2>H,\\
		\sum\limits_{n\in\mathbb{Z}}(A_{n}^{-}e^{ i\beta_{n}\int^{x_{2}}_{-\delta-H}s(\tau)d\tau}+B_{n}^{-}e^{- i\beta_{n}\int^{x_{2}}_{-\delta- H}s(\tau)d\tau})e^{i\frac{2n\pi}{\Lambda}x_{1}},&x_2<-H.
	\end{array}\right.
\end{eqnarray*}
We also denote $\sigma^{+}=\int_{H}^{H+\delta}s(x_{2})dx_{2}$, $\sigma^{-}=\int_{-H-\delta}^{-H}s(x_{2})dx_{2}$, $|\sigma|=min\{|\sigma^{+}|,|\sigma^{-}|\}$. To determine $A_{n}^\pm,B_{n}^\pm$, we note that $\phi$ has well defined Fourier coefficients $\phi^{\pm}_{n},n\in\mathbb{Z}$ corresponding to its trace on $\Gamma_{H}^{\pm}$.
Furthermore, since $\phi(x_{1},x_{2})=0,x_{2}=\pm(\delta+ H)$, we obtain the following equations for $n\in\mathbb{Z}$
\begin{eqnarray*}	A_{n}^{\pm}e^{i\beta_{n}\sigma^{\pm}}+B_{n}^{\pm}e^{-i\beta_n\sigma^{\pm}}&=&\phi_{n}^{\pm},\\
	A_{n}^{\pm}+B_{n}^{\pm}&=&0.
\end{eqnarray*}
It follows that
\begin{equation*}
	A_{n}^{\pm}=-B_{n}^{\pm}=\phi_{n}^{\pm}/(e^{i\beta_{n}\sigma^{\pm}}-e^{-i\beta_{n}\sigma^{\pm}}).
\end{equation*}
Thus
\begin{eqnarray*}
	\phi(x_{1},x_{2})=\left\{\begin{array}{ll}\sum_{n\in\mathbb{Z}}\frac{e^{i\beta_{n}\int_{x_{2}}^{\delta+ H}s(\tau)d\tau}-e^{-i\beta_{n}\int_{x_{2}}^{\delta+ H}s(\tau)d\tau}}{e^{i\beta_{n}\sigma^{+}}-e^{-i\beta_{n}\sigma^{+}}}\phi^{+}_{n}e^{i\frac{2n\pi}{\Lambda}x_{1}}, &x\in\Omega^{PML}_{+},\\
		\sum_{n\in\mathbb{Z}}\frac{e^{ i\beta_{n}\int^{x_{2}}_{-\delta-H}s(\tau)d\tau}-e^{- i\beta_{n}\int^{x_{2}}_{-\delta-H}s(\tau)d\tau}}{e^{i\beta_{n}\sigma^{-}}-e^{-i\beta_{n}\sigma^{-}}}\phi_{n}^{-}e^{i\frac{2n\pi}{\Lambda}x_{1}}, &x\in \Omega^{PML}_{-}.
	\end{array}\right.
\end{eqnarray*}
Now we consider the derivative $\frac{\partial \phi}{\partial x_{2}}$ on $\Gamma_{\pm H}$, and introduce the following  PML DtN mapping $T^{\pm}_{\alpha,PML}: H^{\frac{1}{2}}(\Gamma_{\pm_{H}})\rightarrow H^{-\frac{1}{2}}(\Gamma_{\pm_{H}})$, notice that $\coth(\tau)=\frac{e^{\tau}+e^{-\tau}}{e^{\tau}-e^{-\tau}},\tau\in\mathbb{C}$, 
\begin{equation}
	T^{\pm}_{\alpha,PML}\phi:=\sum_{n\in\mathbb{Z}}i\beta_{n}\coth(-i\beta_{n}\sigma^{\pm})\phi_{n}^{\pm}e^{i\frac{2\pi n}{\Lambda}x_{1}}. \label{dtPML} 
\end{equation}
     Using Green's formula in $\tilde{\Omega}_{H}\setminus\Omega_{H}$, we find that for every eigenvalue $\alpha$ of \eqref{Dpblm} with a corresponding eigenvector $\phi$, and for all $\psi\in X_{per}(\tilde{\Omega}_{H})$,
\begin{eqnarray*}
	&&a_{\alpha,\tilde{\Omega}_{H}}(\phi  ,\psi )=\int_{\Omega_{H}}\big(\nabla \phi \cdot\nabla \bar{\psi} -2i\alpha\frac{\partial \phi}{\partial x_{1}}\bar\psi-(k^{2}\gamma(x)-\alpha^{2})\phi\bar{\psi}\big)dx\\
	&&\qquad\qquad-\int_{\Gamma_{H}}T_{\alpha,PML}^{+}\phi\bar{\psi}ds-\int_{\Gamma_{-H}}T_{\alpha,PML}^{-}\phi\bar{\psi}ds.
\end{eqnarray*}
Thus we are led to define, for all $\phi,\psi\in H^{1}_{per}(\Omega_{H})$,
\begin{eqnarray*}
	a_{\alpha,PML}(\phi ,\psi)=&\int_{\Omega_{H}}\big(\nabla \phi \cdot\nabla \bar{\psi} -2i\alpha\frac{\partial \phi}{\partial x_{1}}\bar\psi-(k^{2}\gamma(x)-\alpha^{2})\phi\bar{\psi}\big)dx\\
	&-\int_{\Gamma_{H}}T_{\alpha,PML}^{+}\phi\bar{\psi}ds-\int_{\Gamma_{-H}}T_{\alpha,PML}^{-}\phi\bar{\psi}ds.
\end{eqnarray*}
This leads to the PML eigenvalue problem: find $\alpha \in \mathbb{C}$ and $\phi\in H^1_{per}(\Omega_{H})$, $\phi\neq 0$, such that
\begin{equation}
	a_{\alpha,PML}(\phi,\psi)=0,\forall \psi\in H^1_{per}(\Omega_{H}).\label{PMLpblm}
\end{equation}
We have the following theorem.
\begin{theorem}
	    Let $\alpha$ be an eigenvalue of problem \eqref{PMLpblm} with a corresponding eigenfunction $\phi\in H^1_{per}(\Omega_{H})$. Then $\alpha$ is also an eigenvalue of problem \eqref{Dpblm}, and there exists a corresponding eigenfunction $\hat{\phi}\in X_{per}(\tilde{\Omega}_{H})$ such that $\phi = \hat{\phi}|_{\Omega_H}$.
\end{theorem}
\begin{proof}
	$\forall \phi,\psi\in H^1_{per}(\Omega_{H})$, for $|x_{2}|>H$, we consider the following function
	\begin{equation*}
		\xi^{\pm}_{n}(x_{2})=e^{\pm i\beta_{n}\int_{x_{2}}^{\pm(\delta+ H)}s(\tau)d\tau}-e^{\mp i\beta_{n}\int_{x_{2}}^{\pm(\delta+ H)}s(\tau)d\tau}.
	\end{equation*}
	Thus we define the following extension operator
	\begin{equation*}
		E^{\pm}(\alpha)\phi(x_{1},x_{2}) = \sum_{n\in\mathbb{Z}}\frac{\xi^{\pm}_{n}(x_{2})}{\xi^{\pm}_{n}(\pm H)}\phi_{n}^{\pm}e^{i\frac{2\pi}{\Lambda}nx_{1}}.
	\end{equation*}
	It follows that  $E^{\pm}(\alpha)\phi(x_{1},\pm (H+ \delta) )=0$, and we define $\hat{\phi}\in X_{per}(\tilde{\Omega}_{H})$ such that
	\begin{eqnarray*}
		\hat{\phi}(x_{1},x_{2})=\left\{\begin{array}{ll} E^{\pm}(\alpha)\phi(x_{1},x_{2}), &\text{if } \pm x_{2}\in (H,H+\delta),\\
			\phi(x_1,x_2), & \text{ Otherwise }.
		\end{array}\right.
	\end{eqnarray*}
	A direct computation verifies that $\hat{\phi}$ is an eigenvector of \eqref{Dpblm}.
\end{proof}

\subsection{Analysis of the PML DtN operator}
Recalling the PML DtN mapping given in \eqref{dtPML}, 
to discuss the perturbation of propagative wave numbers, we are led to investigate
\begin{equation*}
	\coth(-i\beta_{j}\sigma^{\pm})-1=\frac{2}{\exp(-2i\beta_{j}\sigma^{\pm})-1}.
\end{equation*}
Now, for a general $\sigma\in\mathbb{C}$, we give special emphasis on the estimate of $h(z)$, which is defined by,
\begin{equation*}
	h(z):=\exp(-2i\sqrt{k^2-z^2}\sigma).
\end{equation*}
In the following sections, we take a fixed $M>3$, and define
\begin{equation}\label{deltaPML}
	\delta_{PML}=\min\lbrace\frac{\delta_{j}}{M}:j\in\mathcal{J}\rbrace.
\end{equation}
where $\delta_j, j\in\mathcal{J}$ are defined in Corollary \ref{analyprop}. 
Furthermore, given all $\alpha_{j}\in P(k)$, we introduce the critical region $\mathcal{V}$ and its periodic extension $\mathcal{V}_{inn}$ by
\begin{equation*}
	\mathcal{V}=\cup_{j\in\mathcal{J}}\overline{B_{\delta_{PML}}(\alpha_{j})},\mathcal{V}_{inn}=\cup_{n\in\mathbb{Z}}(\mathcal{V}+\frac{2\pi n}{\Lambda}).
\end{equation*}
Now we can prove the following lemma.
\begin{lemma}\label{lemma1}
	Under Assumption \ref{keyasump}, let
	$\tau\in (\frac{\arctan(\frac{1}{M-1})}{2},\frac{\pi-\arctan(\frac{2(3M-1)}{((3M-1)(M-1)-1)})}{2})$,  $\sigma = |\sigma|e^{i\tau}$,  there exits $\gamma_0>0$, independent of $|\sigma|$, $\delta_{PML}$, such that
	\begin{equation}
		|h(z)|\geq \exp(\gamma_0\sqrt{\delta_{PML}}|\sigma|\sqrt{|\Re(z)|+k}),\label{Exdec}
	\end{equation}
	for all $z\in \mathcal{V}_{inn}$.
\end{lemma}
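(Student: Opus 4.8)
The plan is to track the modulus $|h(z)| = \exp(2\,\Re(-i\sqrt{k^2-z^2}\,\sigma))$ and show the exponent is bounded below by $\gamma\sqrt{\delta_{PML}}|\sigma|\sqrt{|\Re z|+k}$ uniformly on $\mathcal{V}_{inn}$. Write $\sigma = |\sigma|e^{i\tau}$ and $w := -i\sqrt{k^2-z^2}$, so that $\Re(-i\sqrt{k^2-z^2}\,\sigma) = |\sigma|\,\Re(w e^{i\tau}) = |\sigma|\big(\Re(w)\cos\tau - \Im(w)\sin\tau\big)$. The first step is therefore a careful analysis of the complex number $w=-i\sqrt{k^2-z^2}$ for $z\in\mathcal{V}_{inn}$: I would split into the two cases $z$ near a "propagating" index (where $k^2-z^2$ has positive real part, so $\sqrt{k^2-z^2}$ lies near the positive real axis and $w$ lies near the negative imaginary axis) and $z$ near a "non-propagating" index (where $k^2-z^2$ has negative real part, so $w$ lies near the real axis). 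In both cases I need quantitative bounds: a lower bound on $|w|$ in terms of $\sqrt{|\Re z|+k}$, and control of $\arg(w)$ so that $\Re(we^{i\tau})$ stays positive. The geometric input is exactly the two real-part estimates proved in Lemma~\ref{finiteness} (the bounds $\Re(k^2-\alpha_n^2)>\tfrac{4}{9}\rho^2$ and $\Re(\alpha_n^2-k^2)>\tfrac13\rho^2$), now applied with $\delta_{PML}$ in place of $\tfrac13\rho$; since $\delta_{PML}=\min_j\delta_j/M$ and the balls $B_{\delta_j}(\alpha_j)$ are where $D(\alpha)$ is analytic and injective off $\alpha_j$, points of $\mathcal{V}_{inn}$ stay a definite distance from the cut-off locus, and $|k^2-z^2|\gtrsim \delta_{PML}(|\Re z|+k)$.

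The second step is to convert those bounds on $k^2-z^2$ into bounds on $w=-i\sqrt{k^2-z^2}$ and then into a lower bound on $\Re(we^{i\tau})$. The modulus is easy: $|w| = |k^2-z^2|^{1/2} \gtrsim \sqrt{\delta_{PML}}\,\sqrt{|\Re z|+k}$, which is where the $\sqrt{\delta_{PML}}$ and the $\sqrt{|\Re z|+k}$ in \eqref{Exdec} come from. The delicate part is the argument: I must show $\arg(w)$ lies in a cone $(\theta_{\min},\theta_{\max})$ such that adding $\tau$ keeps it strictly inside $(-\pi/2,\pi/2)$, with a margin that does not degrade as $\delta_{PML}\to0$. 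This is precisely what dictates the admissible window for $\tau$ stated in the lemma: the endpoints $\tfrac12\arctan(\tfrac{1}{M-1})$ and $\tfrac12\big(\pi-\arctan\tfrac{2(3M-1)}{(3M-1)(M-1)-1}\big)$ should emerge as the extreme arguments of $w$ coming from the worst-case points of $\mathcal{V}_{inn}$ (the boundary points of the small balls, where the real part of $k^2-z^2$ is only $\Theta(\delta_{PML}^2)$ while the imaginary part can be $\Theta(\delta_{PML})$, giving an argument of $k^2-z^2$ close to $\pm\pi/2$ and hence of $w=-i\sqrt{\cdot}$ close to the stated bounds after halving and rotating by $-\pi/2$). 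I would do this computation once for the "positive real part" branch and once for the "negative real part" branch; by the pairing symmetry $\alpha\mapsto-\alpha$ and the evenness of $z\mapsto k^2-z^2$ it suffices to treat $\Re z\ge0$.

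Once $\Re(we^{i\tau}) \ge c\,|w| \ge c'\sqrt{\delta_{PML}}\sqrt{|\Re z|+k}$ for a fixed $c>0$ depending only on $M$ and the chosen $\tau$, we get $|h(z)| = \exp(2|\sigma|\Re(we^{i\tau})) \ge \exp(2c'|\sigma|\sqrt{\delta_{PML}}\sqrt{|\Re z|+k})$, and setting $\gamma = 2c'$ finishes the proof. I would also record that $\gamma$ is independent of $|\sigma|$ and $\delta_{PML}$ since it came only from the cone angles and the constant $M$.

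The main obstacle I anticipate is the uniformity in step two — specifically, pinning down the exact cone for $\arg(w)$ over all of $\mathcal{V}_{inn}$ (an infinite union of small balls, translated by $\tfrac{2\pi n}{\Lambda}$) and verifying that the worst case is indeed attained on the ball boundaries and that the arithmetic produces exactly the $\arctan$ expressions in the hypothesis on $\tau$. One must be careful that large-$|n|$ translates do not spoil things: for those, $|\Re z|$ is large, $k^2-z^2$ has large negative real part, $\arg(w)$ is close to $0$ or $\pi$, which is the easy regime, so the binding constraint really does come from the finitely many balls near the origin; making this dichotomy rigorous (with an explicit threshold in $|n|$) is the one place where care is needed, but it is bookkeeping rather than a genuine difficulty.
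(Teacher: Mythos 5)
Your plan is essentially the paper's proof: write the exponent as $2r_z|\sigma|\sin(\theta+\tau)$ with $\sqrt{k^2-z^2}=r_ze^{i\theta}$, split $\mathcal{V}_{inn}$ into the two cases $|a|<k-(M-1)\delta_{PML}$ and $|a|>k+(M-1)\delta_{PML}$ (i.e.\ by the sign of $\Re(k^2-z^2)$), deduce $r_z^2\gtrsim \delta_{PML}(|\Re z|+k)$ from the definition of $\delta_{PML}$, and confine $\theta$ to an $M$-dependent cone via a bound on $|\tan 2\theta|$ so that $\sin(\theta+\tau)\geq\gamma>0$ for $\tau$ in the stated window — exactly the two-case argument in the paper. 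One small correction to your worst-case aside: on $\mathcal{V}_{inn}$ one has $|\Re(k^2-z^2)|\geq (M-1)\delta_{PML}(|a|+k)$ (up to the harmless $\delta_{PML}^2$ correction in the second case), not $\Theta(\delta_{PML}^2)$, while $|\Im(k^2-z^2)|\leq 2|a|\delta_{PML}$, so the ratio, hence the cone for $\theta$, depends only on $M$ — this is precisely why $\gamma$ is uniform in $\delta_{PML}$, as your conclusion requires.
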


\begin{proof}
	Letting $\sigma=|\sigma|e^{i\tau}$, $\sqrt{k^{2}-z^{2}}=r_{z}e^{i\theta}$. We note that the choice of the branch cut gives $\theta\in (-\frac{\pi}{4},\frac{3\pi}{4})$. It follows that
	\begin{equation}
		\Re (-2i\sqrt{k^2-z^2}\sigma)=2r_{z}|\sigma|\sin(\theta+\tau).\label{exsin}
	\end{equation}
	Then, to prove the estimate \eqref{Exdec}, it suffices to prove the existence of $\gamma_0 >0$ independent of $z\in \mathcal{V}_{inn}$ and an estimate on the lower bound of $r_{z}$ independent of $|\sigma|$. To start with, we write $z = a+ ib$. By the definition of $\delta_{PML}$, we have $|b|\le \delta_{PML}$ for all $z\in \mathcal{V}_{inn}$. Moreover, it follows immediately from the definition of $\delta_{PML}$ that $M\delta_{PML}<k$. In addition, we have
    \begin{equation*}
    |k - |a|| \ge |k - |\alpha_j|| - |a - \alpha_j| \ge (M-1)\delta_{PML}.
    \end{equation*}
    Consequently, either $|a| \le k - (M-1)\delta_{PML}$ or $|a| \ge k + (M-1)\delta_{PML}$. We treat these two cases separately.
    
        \textbf{Case(1).}  $|a| \le k-(M-1)\delta_{PML}$. In this case, we have
        	\begin{equation*}
			r_{z}^2=|k^{2}-z^{2}|=|k^{2}-a^2+b^2-2abi|\geq |k^{2}|-|a^2|\geq (|a|+k)(M-1)\delta_{PML}.
		\end{equation*}    
		It also shows that $\Re (k^2-z^2)>0$, then $\theta\in (-\frac{\pi}{4},\frac{\pi}{4})$. Now we consider 
		\begin{equation*}
			|\tan 2\theta|=|\frac{2ab}{k^2-a^2+b^2}|\leq |\frac{2a\delta_{PML}}{(|a|+k)(M-1)\delta_{PML}}|=\frac{2|a|}{(M-1)(|a|+k)}\le \frac{1}{M-1},
		\end{equation*}
		thus, $ \theta\in (-\frac{\arctan(\frac{1}{M-1})}{2},\frac{\arctan{\frac{1}{M-1}}}{2})$, for given $\tau\in (\frac{\arctan{\frac{1}{M-1}}}{2},\pi-\frac{\arctan{\frac{1}{M-1}}}{2})$, let $\gamma_{1}=\min\{\sin(\tau-\frac{\arctan{\frac{1}{M-1}}}{2}),\sin(\tau+\frac{\arctan{\frac{1}{M-1}}}{2})\}$, then $\sin(\theta+\tau)\ge\gamma_{1}>0$. 
      
        \textbf{Case(2).} $|a|\ge k+(M-1)\delta_{PML}$. We have
        \begin{eqnarray*}
			&&   |a|+k=|a|-k+2k\ge (3M-1)\delta_{PML},\\
			&&   r_{z}^2=|z^{2}-k^{2}|=|a^2-b^2+2abi-k^2|\geq |a^2-k^2-b^2|\\
			&&~  \geq (|a|+k)(M-1)\delta_{PML}-\delta_{PML}^2
			\geq (|a|+k)((M-1)-1/(3M-1))\delta_{PML}.
		\end{eqnarray*}
		Moreover, since $\Re (k^2-z^2)<0$, we have $\theta\in(\frac{\pi}{4},\frac{3\pi}{4})$. Furthermore, since
		\begin{eqnarray*}
			|\tan 2\theta|=|\frac{2ab}{k^2-a^2+b^2}|\leq |\frac{2a\delta_{PML}}{(|a|+k)((M-1)-1/(3M-1))\delta_{PML}}|\\
			=\frac{2(3M-1)|a|}{((3M-1)(M-1)-1)(|a|+k)}\le\frac{2(3M-1)}{((3M-1)(M-1)-1)}.
		\end{eqnarray*}
		Thus, by letting $\zeta=\arctan(\frac{2(3M-1)}{((3M-1)(M-1)-1)})$, we have 
		$	\theta\in(\frac{\pi-\zeta}{2},\frac{\pi+\zeta}{2})$.
		As $|\tau|<\frac{\pi-\zeta}{2}$ is given, we take $\gamma_{2}=\min\{\cos(\tau-\frac{\zeta}{2}),\cos(\tau+\frac{\zeta}{2})\}$, then $\sin(\theta+\tau)\ge \gamma_{2}>0$.

		By taking $\tau\in (\frac{\arctan(\frac{1}{M-1})}{2},\frac{\pi-\arctan(\frac{2(3M-1)}{((3M-1)(M-1)-1)})}{2})$, we have the desired estimate \eqref{Exdec}. 
	\end{proof}
	\begin{remark}\label{tauassum}
		We note that according to Lemma \ref{lemma1}, for $M>0$ large enough, the region for valid $\tau$ expands to $(0,\frac{\pi}{2})$, so the technical assumption given by \cite{Zhang1} is not necessary for our analysis. We further remark that taking $\tau=\frac{\pi}{4}$ is sufficient; therefore, we automatically require that $\tau$ meets the requirement of Lemma \ref{lemma1} in the following sections.
	\end{remark}
	\begin{theorem}\label{expdecay}
		Under Assumption \ref{keyasump},  for $|\sigma|$ large enough, there is a compact operator $K_{\sigma}(\alpha)$ from $H^{1}_{per}(\Omega_{H})$ into itself such that
		\begin{equation*}
			a_{\alpha,PML}(\phi,\psi)=(D_{\sigma}(\alpha)\phi,\psi)_{*}=((I-K_{\sigma}(\alpha))\phi,\psi)_{*}.
		\end{equation*}
Here $I$ denotes the identity operator. Moreover, one can find a constant $C_{exp}>0$ such that
		\begin{equation*}
			\Vert D(\alpha)-D_{\sigma}(\alpha)\Vert _{*}\leq C_{exp} e^{-\gamma_0\sqrt{k\delta_{PML}}|\sigma|}
		\end{equation*}
		for all $\alpha\in \mathcal{V}$. Here $\gamma_0$ is defined in Lemma \ref{lemma1} and $D(\alpha)$ is defined in Theorem \ref{Thm:1}.
	\end{theorem}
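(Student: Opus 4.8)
The plan is to decompose $a_{\alpha,PML}$ in exactly the same way as $a_\alpha$ was decomposed in the proof of Theorem \ref{Thm:1}, and then to control the difference term by term. First I would write
$$a_{\alpha,PML}(\phi,\psi)=a^1_\alpha(\phi,\psi)-a^2_\alpha(\phi,\psi)-a^3_{\alpha,PML}(\phi,\psi),$$
where $a^1_\alpha$ and $a^2_\alpha$ are \emph{exactly} the forms \eqref{weaka1}, \eqref{weaka2} appearing in the non-truncated problem (the volume integrals over $\Omega_H$ are unchanged by the PML truncation), and
$$a^3_{\alpha,PML}(\phi,\psi)=\sum_{n\in\mathbb{Z}}\bigl(i\beta_n\coth(-i\beta_n\sigma^+)+\tfrac{2\pi}{\Lambda}\sqrt{n^2+1}\bigr)\phi^+_n\bar\psi^+_n+\bigl(i\beta_n\coth(-i\beta_n\sigma^-)+\tfrac{2\pi}{\Lambda}\sqrt{n^2+1}\bigr)\phi^-_n\bar\psi^-_n.$$
Since the isomorphism $T(\alpha)$ comes only from $a^1_\alpha$ and the compact operator $E(\alpha)$ only from $a^2_\alpha$, both are identical to those in Theorem \ref{Thm:1}; only the third piece changes. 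Hence $D(\alpha)-D_\sigma(\alpha)$ is represented by the sesquilinear form $a^3_\alpha-a^3_{\alpha,PML}$, whose coefficients are
$$i\beta_n\bigl(1-\coth(-i\beta_n\sigma^\pm)\bigr)=\frac{-2\,i\beta_n}{\exp(-2i\beta_n\sigma^\pm)-1}.$$
The compactness of $K_\sigma(\alpha)$ then follows by the same Riesz-representation-plus-tail-truncation argument as in Theorem \ref{Thm:1}, since the new coefficients are bounded and the $\coth$ factor tends to $1$ as $|n|\to\infty$, so nothing essentially changes there.

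The heart of the matter is the estimate on the coefficients. For $\alpha\in\mathcal{V}$ and $n\in\mathbb{Z}$ we have $\alpha_n=\alpha+\frac{2\pi n}{\Lambda}\in\mathcal{V}_{inn}$, so Lemma \ref{lemma1} applies with $z=\alpha_n$ and $\sigma=\sigma^\pm$ (using $\tau=\pi/4$ as noted in Remark \ref{tauassum}, which satisfies the hypothesis of Lemma \ref{lemma1} for $M>3$). Thus
$$|\exp(-2i\beta_n\sigma^\pm)|=|h(\alpha_n)|\geq \exp\bigl(\gamma\sqrt{\delta_{PML}}|\sigma|\sqrt{|\Re(\alpha_n)|+k}\bigr)\geq \exp\bigl(\gamma\sqrt{k\delta_{PML}}|\sigma|\bigr),$$
which, for $|\sigma|$ large, is $\geq 2$, whence
$$\Bigl|\frac{2\,i\beta_n}{\exp(-2i\beta_n\sigma^\pm)-1}\Bigr|\leq \frac{4|\beta_n|}{|h(\alpha_n)|}\leq 4|\beta_n|\,e^{-\gamma\sqrt{k\delta_{PML}}|\sigma|}.$$
To convert this pointwise bound into an operator-norm bound with respect to $\|\cdot\|_*$, I would split the sum at $|n|=N_0$ (the smallest index in $U_\alpha$, uniform over $\mathcal{V}$ by compactness). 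For the finitely many $n\in B_\alpha$ the factor $|\beta_n|\le k$ is bounded and there are boundedly many terms, so that part contributes $C e^{-\gamma\sqrt{k\delta_{PML}}|\sigma|}\|\phi\|_*\|\psi\|_*$ directly. For $n\in U_\alpha$ I would write $|\beta_n|=\sqrt{\alpha_n^2-k^2}\le C(1+|n|)$ and sharpen Lemma \ref{lemma1} to absorb the growth: since $|h(\alpha_n)|\ge\exp(\gamma\sqrt{\delta_{PML}}|\sigma|\sqrt{|\Re(\alpha_n)|+k})$ and $\sqrt{|\Re(\alpha_n)|+k}\gtrsim\sqrt{|n|}$, the ratio $|\beta_n|/|h(\alpha_n)|$ is bounded by $C(1+|n|)e^{-c\sqrt{|n|}|\sigma|}\le C_1 e^{-\gamma\sqrt{k\delta_{PML}}|\sigma|}\cdot\frac{1}{1+n^2}$ for $|\sigma|$ large, after a routine elementary estimate of the form $(1+t)e^{-c\sqrt t |\sigma|}\le C(1+t)^{-1}e^{-c'|\sigma|}$. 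Then
$$|a^3_\alpha(\phi,\psi)-a^3_{\alpha,PML}(\phi,\psi)|\le C e^{-\gamma\sqrt{k\delta_{PML}}|\sigma|}\sum_{n}\tfrac{1}{1+n^2}\,\tfrac{2\pi}{\Lambda}\sqrt{1+n^2}\,(|\phi^+_n\bar\psi^+_n|+|\phi^-_n\bar\psi^-_n|)\le C e^{-\gamma\sqrt{k\delta_{PML}}|\sigma|}\|\phi\|_*\|\psi\|_*,$$
using Cauchy--Schwarz and the definition \eqref{starnorm} of $\|\cdot\|_*$. Taking the supremum over $\|\phi\|_*=\|\psi\|_*=1$ gives the claimed operator-norm bound on $D(\alpha)-D_\sigma(\alpha)=-(L(\alpha)-L_\sigma(\alpha))$.

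The main obstacle I anticipate is the uniformity of all constants over $\alpha\in\mathcal{V}$ and the careful bookkeeping needed to handle the unbounded index set $U_\alpha$: one must make sure that the exponential gain $e^{-\gamma\sqrt{k\delta_{PML}}|\sigma|}$ survives after multiplying by the unbounded factor $|\beta_n|$, which requires the full strength of the $\sqrt{|\Re(z)|+k}$ dependence in Lemma \ref{lemma1} rather than just the crude lower bound by $\sqrt{k}$. The other minor point to check is that $N_0$, $\delta_{PML}$, and the constant $\gamma$ can indeed be chosen independently of $\alpha\in\mathcal{V}$ — this is immediate since $\mathcal{V}$ is a finite union of balls of the fixed radius $\delta_{PML}$ and all quantities in Lemma \ref{lemma1} were already shown to be $|\sigma|$- and $\delta_{PML}$-independent. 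Everything else is a direct transcription of the argument in Theorem \ref{Thm:1}.
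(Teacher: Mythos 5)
Your proposal is correct and follows essentially the same route as the paper: the same decomposition $a_{\alpha,PML}=a^1_\alpha-a^2_\alpha-a^3_{\alpha,PML}$ with $T(\alpha)$ and $E(\alpha)$ unchanged, the same tail-truncation argument for compactness of $L_\sigma(\alpha)$, and the same use of Lemma \ref{lemma1} to make $\coth(-i\beta_n\sigma^\pm)-1$ exponentially small. The only (harmless) difference is in the final bookkeeping: the paper simply uses the uniform bound $|\coth(-i\beta_n\sigma^\pm)-1|\le Ce^{-\gamma\sqrt{k\delta_{PML}}|\sigma|}$ and absorbs the $|\beta_n|\lesssim\sqrt{1+n^2}$ growth directly into $\|\cdot\|_*$ via Cauchy--Schwarz, whereas you additionally exploit the $\sqrt{|\Re(\alpha_n)|}$ dependence in the exponent to gain an extra summable factor, which is more than is needed.
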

	\begin{proof}
		For any $\alpha_{j}\in P(k)$, we consider $\alpha\in B_{\delta_{PML}}(\alpha_{j})$. Once again we have that
		\begin{eqnarray*}
			&&a_{\alpha,PML}(\phi,\psi)=\int_{\Omega_{H}}\nabla \phi \cdot\nabla \bar{\psi} dx+2\pi\sum_{n\in\mathbb{Z}}\sqrt{n^2+1}(\phi_{n}^{+}\bar\psi_{n}^{+}+\phi_{n}^{-}\bar\psi_{n}^{-})\\
			&&\qquad-\int_{\Omega_{H}}2i\alpha\frac{\partial \phi}{\partial x_{1}}\bar\psi dx- \int_{\Omega_{H}}(k^{2}\gamma(x)-\alpha^2)\phi\bar\psi dx\\
			&&\qquad-\sum_{n\in\mathbb{Z}}(i\Lambda\beta_{n}\coth(-i\beta_{n}\sigma^{+})+2\pi\sqrt{n^2+1})\phi^{+}_{n}\bar{\psi}^{+}_{n}\\
			&&\qquad-\sum_{n\in\mathbb{Z}}(i\Lambda\beta_{n}\coth(-i\beta_{n}\sigma^{-})+2\pi\sqrt{n^2+1})\phi^{-}_{n}\bar{\psi}^{-}_{n}.
		\end{eqnarray*} 
			By the definitions \eqref{weaka1}-\eqref{weaka2}, we find that 
		$$a_{\alpha,PML}(\phi,\psi)=a^1_{\alpha}(\phi,\psi)-a^2_{\alpha}(\phi,\psi)-a^3_{\alpha,PML}(\phi,\psi),$$
		where
		\begin{eqnarray*}
					&&a^3_{\alpha,PML}(\phi,\psi)=\sum_{n\in\mathbb{Z}}(i\Lambda\beta_{n}\coth(-i\beta_{n}\sigma^{+})+2\pi\sqrt{n^2+1})\phi^{+}_{n}\bar{\psi}^{+}_{n}\\
		&&\qquad\qquad+\sum_{n\in\mathbb{Z}}(i\Lambda\beta_{n}\coth(-i\beta_{n}\sigma^{+})+2\pi\sqrt{n^2+1})\phi^{-}_{n}\bar{\psi}^{-}_{n}.
		\end{eqnarray*}
		Then we have 
			$a^{1}_{\alpha}(\phi,\psi)=(\phi,\psi)_*$, and  $a^{2}_{\alpha}(\phi,\psi)=(E(\alpha)\phi,\psi)_*,$
		where $E(\alpha)$ is the same as the one in the proof of Theorem \ref{Thm:1}.
		For $a^{3}_{\alpha,PML}(\phi,\psi)$, since $\alpha_{n}\in \mathcal{V}_{inn}$ for $n\in \mathbb{Z}$, we use Lemma \ref{lemma1} to obtain that,
		\begin{equation*}
			|\beta_{n}(\coth(-i\beta_{n}\sigma^{\pm})-1)|\leq C|\beta_{n}|\exp(-\gamma_0\sqrt{\delta_{PML}}|\sigma|\sqrt{|(Re(\alpha_{n}))|+k}).
		\end{equation*}
		which is $o(\frac{1}{n})$ as $n\to\infty$. Thus, taking any $N \in \mathbb{Z}^+$, 
		\begin{eqnarray*}
			&&\quad\quad|\sum_{|n|>N}(i\Lambda\beta_{n}\coth(-i\beta_{n}\sigma^{\pm})+2\pi\sqrt{n^2+1})\phi^{\pm}_{n}\bar{\psi}^{\pm}_{n}|\\
			&&~~=\Lambda|\sum_{|n|>N}(i\beta_{n}\coth(-i\beta_{n}\sigma^{\pm})-i\beta_{n}+i\beta_{n}+\frac{2\pi}{\Lambda}\sqrt{n^2+1})\phi^{\pm}_{n}\bar{\psi}^{\pm}_{n}|\\
			&&~~\leq \frac{C}{N} \Vert \phi\Vert _{*}\Vert \psi\Vert _{*}.
		\end{eqnarray*}
		By a similar argument as in Theorem \ref{Thm:1}, there exists a compact operator $L_{\sigma}(\alpha)$ such that
		\begin{equation*}
			a_{\alpha,PML}^{3}(\phi,\psi)=(L_{\sigma}(\alpha)\phi,\psi)_{*}.
		\end{equation*} 
		Then letting $K_\sigma(\alpha)=E(\alpha)+L_\sigma(\alpha)$, we obtain the compact operator as asserted. 
		Finally, since 
		\begin{eqnarray*}
			|(D_{\sigma}(\alpha)\phi,\psi)_{*}-(D(\alpha)\phi,\psi)_{*}|&\leq& |\Lambda\sum_{n\in\mathbb{Z}}\beta_{n}(\coth(-i\beta_{n}\sigma^{\pm})-1)\phi^\pm_{n}\bar{\psi}^\pm_{n}|\\
			&\leq&C e^{-\gamma_0|\sigma|\sqrt{k\delta_{PML}}}|\sum_{n\in\mathbb{Z}}\beta_{n}\phi^\pm_{n}\bar{\psi}^\pm_{n}|\\
			&\leq&C_{exp}e^{-\gamma_0|\sigma|\sqrt{k\delta_{PML}}}\Vert \phi\Vert _{*}\Vert \psi\Vert _{*}.
		\end{eqnarray*}
		And the assertion follows by the definition of operator norm.
	\end{proof}
	\section{Multiplicity of the propagative wave numbers} 
	\label{sect4}
	Since we are interested in the property of propagative wave numbers inside the holomorphic region, let us introduce the concepts concerning the multiplicity of the propagative wave number $\alpha_{j}\in P(k)$ following \cite{Ammari}.
	
    Let $\alpha_0$ be a fixed complex value. Denote by $\mathcal{U}(\alpha_{0})$ the set of all operator-valued functions with values in $\mathcal{L}(H^1_{per}(\Omega_{H}))$ which are holomorphic in some neighborhood of $\alpha_0$, except possibly at $\alpha_{0}$. It follows that $D(\alpha)\in \mathcal{U}(\alpha_{j})$ by Corollary \ref{analyprop}.  As the problem of propagative wave numbers is a nonlinear eigenvalue problem, we need to adapt the following definition on the nonlinear eigenvalues of holomorphic operators.

\begin{definition}\cite{Ammari}
The point $\alpha_{0}$ is called an \textbf{eigenvalue} of $S(\alpha)\in \mathcal{U}(\alpha_{0})$, if there exists a vector-valued function $\phi(\alpha)$ with values in $H^{1}_{per}(\Omega_{H})$, such that
\begin{enumerate}
\item $\phi({\alpha})$ is holomorphic at $\alpha_{0}$, and $\phi(\alpha_{0})\neq 0$,
\item $S(\alpha)\phi(\alpha)$ is holomorphic at $\alpha_{0}$ and vanishes at this point.
\end{enumerate} 
Here, $\phi(\alpha)$ is called a root function of $S(\alpha)$, associated with $\alpha_{0}$. The vector $\phi_{0}=\phi(\alpha_{0})$ is called an \textbf{eigenvector} of $S(\alpha)$. 
The closure of the linear set of eigenvectors corresponding to $\alpha_{0}$ is denoted by $\operatorname{Ker}(S(\alpha_{0}))$. Then there exists a number $m(\phi)\geq 1$, and a vector-valued function $\psi(\alpha)$ with values in $H^{1}_{per}(\Omega_{H})$, holomorphic at $\alpha_{0}$ such that
\begin{equation*}
S(\alpha)\phi(\alpha)=(\alpha-\alpha_{0})^{m(\phi)}\psi(\alpha),~\psi(\alpha_{0})\neq 0.
\end{equation*}
 We define the rank of $\phi_{0}$, denoted by $\operatorname{rank}(\phi_{0})$, to be 
 \begin{equation*}
     \operatorname{rank}(\phi_{0})=\max\{m(\phi)|\phi_{0}=\phi(\alpha_{0}),\phi(\alpha) \textit{ is a root function of }S(\alpha)\}.
 \end{equation*} 
\end{definition}
	Thus, every propagative wave number $\alpha_{j}\in P(k)$ is a eigenvalue of $D(\alpha)$, and $n_j:=\dim(\operatorname{Ker}(D(\alpha_{j})))$ is finite by Lemma \ref{finiteness}. To analyze the properties of $D(\alpha)$ and $D^{-1}(\alpha)$ near $\alpha_j$, we introduce the following definition.

\begin{definition}\cite{Ammari2010}\label{normalpoint}
Assume that $\alpha_{0}$ is a pole of $S(\alpha) \in \mathcal{U}(\alpha_0)$, and the Laurent series of $S(\alpha)$ at $\alpha_{0}$ has the following form:
    \begin{equation*}
        S(\alpha)=\sum_{j\geq -m}^{\infty}(\alpha-\alpha_{0})^{j}S_{j}.
    \end{equation*}
    If the operators $\{S_{n}\}_{n=-m}^{-1}$ are all of finite rank, we call $S(\alpha)$ finitely meromorphic at $\alpha_{0}$. If the operator $S_{0}$ is Fredholm (of index zero), we say $S(\alpha)$ is of Fredholm type (of index zero) at the point $\alpha_{0}$. If $S(\alpha)$ is invertible and holomorphic at $\alpha_{0}$, we call $\alpha_{0}$ a regular point of $S(\alpha)$. If every point of $V\subset\mathbb{C}$ is a regular point of $S(\alpha)$, we call $S(\alpha)$ regular in $V$. If $S(\alpha)$ is finitely meromorphic, of Fredholm type at $\alpha_{0}$ and is regular in a punctured neighborhood of $\alpha_{0}$, we call $\alpha_{0}$ a normal point of $S(\alpha)$.
\end{definition}

Thus, $\alpha_{j}\in P(k)$ are normal points of $D(\alpha)$. By Theorem 1.8 and Corollary 1.9 of \cite{Ammari}, the operator $D(\alpha)$ admits a decomposition of the form  

\begin{equation}\label{decom1}
    D(\alpha)=E_j(\alpha)\Bigl(P_{j,0}+\sum_{i=1}^{k_j} (\alpha-\alpha_j)^{m_{j,i}}P_{j,i}\Bigr)F_j(\alpha).
\end{equation}

Here $E_j(\alpha)$ and $F_j(\alpha)$ are regular at $\alpha_j$. $\{P_{j,i}\}_{i=0}^{k_j}$ are mutually disjoint projections with $P_{j,1},\cdots,P_{j,n}$ being rank‑one operators, and they satisfy $\sum_{i=0}^{k_j} P_{j,i}=I$. Let $\mathbb{N}_0$ denote the set of non‑negative integers. The Riesz number of a bounded linear operator is defined as follows.   

\begin{definition}\cite{Lay,Martensen}
    Let $\mathcal{H}$ be a Banach space and $\mathcal{T}\in \mathcal{L}(X)$. The ascent and descent of $\mathcal{T}$ are
$$
\operatorname{asc}(\mathcal{T}):=\inf\{n\in\mathbb N_0:\operatorname{Ker} (\mathcal{T}^{n})=\operatorname{Ker}( \mathcal{T}^{n+1})\},$$
$$
\operatorname{dsc}(\mathcal{T}):=\inf\{n\in\mathbb N_0:\operatorname{Ran}(\mathcal{T}^{n})=\operatorname{Ran}(\mathcal{T}^{n+1})\},
$$
with value $\infty$ if the corresponding set is empty. If $\operatorname{asc}(\mathcal{T})=\operatorname{dsc}(\mathcal{T})<\infty$; their common value $r(\mathcal{T})$ is called the Riesz number of $\mathcal{T}$. Moreover, we have
$$ \mathcal{H}= \operatorname{Ker}(\mathcal{T}^{r(\mathcal{T})}) \oplus \operatorname{Ran}(\mathcal{T}^{r(\mathcal{T})})$$
\end{definition}
From Theorem \ref{Thm:1}, we have that $D(\alpha_j)$ can be written as the sum of the identity operator and a compact operator. Consequently, $\operatorname{asc}(D(\alpha_j)) = \operatorname{dsc}(D(\alpha_j)) < \infty$. For a more precise description of the decomposition \eqref{decom1}, we prove the following result for eigenvalues $\alpha_j \in  P(k)$: 
      \begin{theorem}\label{rieszno}
		Under Assumption \ref{keyasump}, the Riesz number of $D(\alpha_{j}),\alpha_j\in P(k)$ is one. 
	\end{theorem}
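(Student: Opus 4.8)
The plan splits the statement into the easy index assertion and the substantive ascent assertion. By Theorem~\ref{Thm:1} we have $D(\alpha_j)=T(\alpha_j)\bigl(I-T(\alpha_j)^{-1}K(\alpha_j)\bigr)$ with $T(\alpha_j)$ an isomorphism and $T(\alpha_j)^{-1}K(\alpha_j)$ compact, so $D(\alpha_j)$ is Fredholm; once its Riesz number is shown to be finite, ascent and descent coincide and the Fredholm index is forced to be $0$. Hence it suffices to prove that the Riesz number equals one, i.e. $\ker D(\alpha_j)=\ker D(\alpha_j)^2$ (since $\alpha_j$ is a propagating value, $\ker D(\alpha_j)\neq\{0\}$, so one is the smallest possible value).

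The key tool is a generalized Green identity for the form $a_\alpha$ at a real parameter $\alpha$. Integrating by parts the term $-2i\alpha\int_{\Omega_H}\partial_{x_1}\phi\,\bar\psi\,dx$ (periodicity in $x_1$ removes the boundary contributions) and using that $k$, $\gamma$ and $\alpha$ are real, I would obtain
\[
a_\alpha(\phi,\psi)-\overline{a_\alpha(\psi,\phi)}=-2i\sum_{n\in B_\alpha}\beta_n\bigl(\phi_n^{+}\overline{\psi_n^{+}}+\phi_n^{-}\overline{\psi_n^{-}}\bigr),
\]
because the volume terms and the evanescent DtN modes ($n\in U_\alpha$, where $i\beta_n$ is real) are Hermitian, while only the propagating DtN modes ($n\in B_\alpha$, where $\beta_n>0$ is real) survive in the skew part. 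Applying this with $\phi=\psi=\phi_1$ for an arbitrary $\phi_1\in\ker D(\alpha_j)$ — for which $a_{\alpha_j}(\phi_1,\phi_1)=(D(\alpha_j)\phi_1,\phi_1)_*=0$, in particular $\operatorname{Im}a_{\alpha_j}(\phi_1,\phi_1)=0$ — gives $\sum_{n\in B_{\alpha_j}}\beta_n\bigl(|\phi_{1,n}^{+}|^2+|\phi_{1,n}^{-}|^2\bigr)=0$. Under Assumption~\ref{keyasump} the cut-off values are excluded, so $|\alpha_n|<k$ and $\beta_n>0$ strictly for $n\in B_{\alpha_j}$, whence $\phi_{1,n}^{\pm}=0$ for every $n\in B_{\alpha_j}$: a propagating mode carries no outgoing Rayleigh modes and is evanescent in the $x_2$-direction.

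Now I finish by contradiction. Let $\phi\in\ker D(\alpha_j)^2$ and set $\phi_1:=D(\alpha_j)\phi\in\ker D(\alpha_j)$; suppose $\phi_1\neq 0$. By Theorem~\ref{Thm:1}, $a_{\alpha_j}(\phi,\psi)=(\phi_1,\psi)_*$ for all $\psi$, so taking $\psi=\phi_1$ gives $a_{\alpha_j}(\phi,\phi_1)=\|\phi_1\|_*^2>0$. On the other hand, the Green identity with $\psi$ replaced by $\phi_1$, combined with $\phi_{1,n}^{\pm}=0$ on $B_{\alpha_j}$, makes the skew term vanish, so $a_{\alpha_j}(\phi,\phi_1)=\overline{a_{\alpha_j}(\phi_1,\phi)}=\overline{(D(\alpha_j)\phi_1,\phi)_*}=0$, a contradiction. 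Therefore $\ker D(\alpha_j)^2=\ker D(\alpha_j)$, the Riesz number is one, and by the first paragraph the Fredholm index of $D(\alpha_j)$ is $0$.

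The step I expect to be the main obstacle is the derivation of the Green identity and its consequence that the propagating Fourier modes of every kernel element vanish: one must track carefully how the substitution $u=\phi e^{i\alpha x_1}$ and the integration by parts convert the DtN contributions into precisely the displayed skew sum, and one must invoke Assumption~\ref{keyasump} exactly where strict positivity of $\beta_n$ on $B_{\alpha_j}$ is needed, so that a cut-off value cannot intervene. Once this structural fact is in place, the remaining functional-analytic argument is short.
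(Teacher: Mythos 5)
Your proof is correct and follows essentially the same route as the paper: you take the imaginary part of $a_{\alpha_j}(\phi_1,\phi_1)=0$ to conclude that every kernel element has vanishing propagating Fourier modes $\phi_{1,n}^{\pm}$, $n\in B_{\alpha_j}$, and then use the resulting Hermitian symmetry $a_{\alpha_j}(\phi,\phi_1)=\overline{a_{\alpha_j}(\phi_1,\phi)}$ to force $D(\alpha_j)\phi=0$ for $\phi\in\ker D(\alpha_j)^2$, which is exactly the paper's computation $(\eta,\eta)_*=a_{\alpha_j}(\xi,\eta)=\overline{a_{\alpha_j}(\eta,\xi)}=0$ phrased as a contradiction. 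Your explicitly stated Green identity is just the spelled-out version of the symmetry the paper invokes implicitly, so the two arguments coincide in substance.
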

	\begin{proof}
		For a fixed propagative wave number $\alpha_{j}\in P(k)$, let $\phi \in \operatorname{Ker}(D(\alpha_J))$. Since $a_{\alpha_j}(\phi,\phi)=0$, we have
		\begin{eqnarray*}
			\int_{\Omega_{H}}\bigl(|\nabla \phi|^2 +2\alpha_{j}\Im(\bar\phi\frac{\partial \phi}{\partial x_{1}})-(k^{2}\gamma(x)-\alpha_{j}^{2})|\phi|^2 \bigr)dx=\sum_{n\in\mathbb{Z}}i\Lambda\beta_{n}(|\phi^{+}_{n}|^2+|\phi^{-}_{n}|^2). 
		\end{eqnarray*}
		Thus, taking the imaginary part of both sides, we obtain that
		\begin{equation}\label{nonprop}
			\sum_{n\in B_{\alpha_{j}}}\beta_{n}(|\phi^{+}_{n}|^2+|\phi^{-}_{n}|^2)=0.
		\end{equation}
		It follows that $\phi^{+}_{n}=\phi^{-}_{n}=0,n\in B_{\alpha_{j}}$. Now, letting $\xi\in \operatorname{Ker}(D^2(\alpha_{j}))$, and $\eta=D(\alpha_j)\xi\in \operatorname{Ker}(D(\alpha_{j}))$. Since $\eta^{+}_{n}=\eta^{-}_{n}=0,n\in B_{\alpha_{j}}$, we have
		\begin{equation*}
			(\eta,\eta)_{*}=a_{\alpha_{j}}(\xi,\eta)=\overline{a_{\alpha_{j}}(\eta,\xi)}=0.
		\end{equation*}
		Namely, $\eta=0$, thus $\xi\in \operatorname{Ker}(D(\alpha_{j}))$. So  $\operatorname{Ker}(D^2(\alpha_{j}))\subset \operatorname{Ker}(D(\alpha_{j}))$, which proves the assertion.
	\end{proof}
	We remark that by equation \eqref{nonprop}, the following corollary holds.
	\begin{corollary}\label{corollary2}
		Under Assumption \ref{keyasump}, for all $\alpha_{j}\in P(k)$, the corresponding guided modes only have evanescent modes. 
	\end{corollary}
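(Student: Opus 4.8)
The plan is to read off the corollary from the identity \eqref{nonprop} already established in the proof of Theorem \ref{rieszno}. It helps first to make the terminology precise: for a propagating mode $u$ with Rayleigh expansions \eqref{uRay}--\eqref{lRay}, the indices split into $n\in B_{\alpha_j}$, for which $\beta_n=\sqrt{k^2-\alpha_n^2}>0$ is real and the term $e^{i\alpha_n x_1\pm i\beta_n(x_2\mp H)}$ is an oscillating, non-decaying Rayleigh mode, and $n\in U_{\alpha_j}$, for which $\beta_n=i\sqrt{\alpha_n^2-k^2}$ has positive imaginary part and the term decays exponentially as $|x_2|\to\infty$, i.e.\ is evanescent. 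The assertion ``the propagating mode has only evanescent modes'' is then exactly the statement that $u^\pm_n=0$ for every $n\in B_{\alpha_j}$.

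First I would fix $\alpha_j\in\mathcal{P}(k)$ and a corresponding propagating mode; writing $u=\phi e^{i\alpha_j x_1}$ with $\phi\in H^1_{per}(\Omega_H)$, I would test the variational equation \eqref{oripblm} against $\psi=\phi$, exactly as in the proof of Theorem \ref{rieszno}. Since $\gamma$ is real-valued and $\alpha_j$ is real, all the volume contributions in $a_{\alpha_j}(\phi,\phi)$ are real (for the cross term $-2i\alpha_j\int_{\Omega_H}\tfrac{\partial\phi}{\partial x_1}\bar\phi\,dx$ one uses periodicity of $\phi$ in $x_1$ to see that $\int_{\Omega_H}\tfrac{\partial\phi}{\partial x_1}\bar\phi\,dx$ is purely imaginary), and in the boundary sum $-\sum_n i\beta_n(|\phi_n^+|^2+|\phi_n^-|^2)$ the terms with $n\in U_{\alpha_j}$ are real because there $i\beta_n=-\sqrt{\alpha_n^2-k^2}\in\mathbb{R}$. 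Taking imaginary parts therefore kills everything except the $n\in B_{\alpha_j}$ part of the boundary sum and yields \eqref{nonprop}:
\[
\sum_{n\in B_{\alpha_j}}\beta_n\bigl(|\phi_n^+|^2+|\phi_n^-|^2\bigr)=0 .
\]
Next, by Assumption \ref{keyasump} a propagating value is not a cut-off value, so $|\alpha_n|\neq k$ for all $n$, and hence $\beta_n=\sqrt{k^2-\alpha_n^2}$ is \emph{strictly} positive for each $n\in B_{\alpha_j}$. As $B_{\alpha_j}$ is finite, a sum of nonnegative terms that vanishes must be termwise zero, so $\phi_n^+=\phi_n^-=0$ for every $n\in B_{\alpha_j}$; identifying these Fourier coefficients of $\phi$ on $\Gamma_{\pm H}$ with the Rayleigh coefficients $u_n^\pm$ in \eqref{uRay}--\eqref{lRay} gives the claim, since only the exponentially decaying terms indexed by $U_{\alpha_j}$ survive.

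There is no genuinely hard step: once \eqref{nonprop} is in hand the corollary is immediate. The only point worth stating carefully is the role of Assumption \ref{keyasump} — it is what upgrades ``$\beta_n(|\phi_n^+|^2+|\phi_n^-|^2)=0$'' to ``$\phi_n^\pm=0$'' for \emph{all} $n\in B_{\alpha_j}$; without excluding cut-off values one could only kill the coefficients with $\beta_n\neq 0$ and would face a degenerate case at $|\alpha_n|=k$. The other small bookkeeping item is to spell out the identification between the Fourier modes $\phi_n^\pm$ of $\phi=u e^{-i\alpha_j x_1}$ on $\Gamma_{\pm H}$ and the Rayleigh coefficients $u_n^\pm$, so that the conclusion is phrased directly in terms of the expansions \eqref{uRay}--\eqref{lRay}.
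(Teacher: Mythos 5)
Your proof is correct and follows essentially the same route as the paper: the paper derives \eqref{nonprop} by testing $a_{\alpha_j}(\phi,\phi)=0$ and taking imaginary parts in the proof of Theorem \ref{rieszno}, and then states the corollary as an immediate consequence. Your only addition is to spell out explicitly that Assumption \ref{keyasump} guarantees $\beta_n>0$ strictly for $n\in B_{\alpha_j}$ (so the vanishing of the sum forces $\phi_n^\pm=0$), a point the paper leaves implicit.
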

 Theorem \ref{rieszno} gives the decomposition $H_{per}^1(\Omega_H)=\operatorname{Ker}(D(\alpha_{j})) \oplus \operatorname{Ran}(D(\alpha_{j}))$.
    Let $P_j^{\operatorname{Ker}}$ be the projection onto $\operatorname{Ker}(D(\alpha_{j}))$ along $\operatorname{Ran}(D(\alpha_{j}))$, and set $P_j^{\operatorname{Ran}}=I-P_j^{\operatorname{Ker}}$. By a straightforward adaptation of the proof of Theorem 1.8 in \cite{Ammari}, the following decomposition can be readily established, we omit its proof.

\begin{theorem}\label{thm:decom_exact}
For each $j \in \mathcal{J}$, there exists $\delta_{j}^* \in (0,\delta_j)$ such that, on $B_{\delta_j^*}(\alpha_j)$, the operator $D(\alpha)$ admits the decomposition
\begin{equation}\label{decom_exact}
D(\alpha)=E_j(\alpha)\Bigl(P_{j}^{\operatorname{Ran}}+\sum_{i=1}^{n_j} (\alpha-\alpha_j)^{r_{j,i}}P_{j,i}^{\operatorname{Ker}}\Bigr)F_j(\alpha).
\end{equation}
Here, $E_j(\alpha)$ and $F_j(\alpha)$ are holomorphic and invertible on $B_{\delta_j^*}(\alpha_j)$. The operators $P_{j}^{\operatorname{Ran}},P_{j,1}^{\operatorname{Ker}},P_{j,2}^{\operatorname{Ker}},$ $\cdots,P_{j,n_j}^{\operatorname{Ker}}$ are mutually disjoint projections. Each $P_{j,i}^{\operatorname{Ker}}: H_{per}^1(\Omega_H)\to \operatorname{Ker}(D(\alpha_j))$ is a rank‑one projection, and they satisfy $\sum_{i=1}^{n_j}P_{j,i}^{\operatorname{Ker}}=P_{j}^{\operatorname{Ker}}$. The positive integers $r_{j,i}$ satisfy $r_{j,1} \ge r_{j,2} \ge \cdots \ge r_{j,n_j} \ge 1$, where $n_j = \dim(\operatorname{Ker}(D(\alpha_j)))$. 
\end{theorem}
By Corollary 3.1 of \cite{Gohberg}, we can determine the possible ranks of the eigenvectors of $D(\alpha_j)$.

\begin{corollary}\label{rankvalue}
    For any eigenvalue $\alpha_{j}\in P(k)$, the kernel $\operatorname{Ker}(D(\alpha_j))$ admits a basis $\{\phi_{j,i}\}_{i=1}^{n_j}$ such that $\operatorname{rank}(\phi_{j,i})=r_{j,i}$ for $i=1,2,\dots,n_j$. Moreover, for every nonzero $\phi \in \operatorname{Ker}(D(\alpha_j))$, the rank satisfies $\operatorname{rank}(\phi)\in \{r_{j,i}\}_{i=1}^{n_j}$.
\end{corollary}

In the domain $B_{\delta_j^*}(\alpha_j) \setminus \{ \alpha_j \}$, taking the inverse of \eqref{decom_exact} yields
\begin{equation}\label{Dinv-decom}
     D^{-1}(\alpha)=F^{-1}_j(\alpha)\Bigl(P_{j}^{\operatorname{Ran}}+\sum_{i=1}^{n_j} (\alpha-\alpha_j)^{-r_{j,i}}P_{j,i}^{\operatorname{Ker}}\Bigr)E_j^{-1}(\alpha). 
\end{equation}

Since $(\alpha-\alpha_j)^{r_{j,1}}D^{-1}(\alpha)$ is holomorphic at $\alpha_j$ and non‑zero at $\alpha=\alpha_j$, we know that $D^{-1}(\alpha)$ admits a Laurent expansion on $B_{\delta_j^*}(\alpha_j)\setminus\{\alpha_j\}$ of the form
\begin{equation}\label{Dinv-laurant}
    D^{-1}(\alpha)=\sum_{m=-r_{j,1}}^{\infty}(\alpha-\alpha_j)^mD_{j,m}.
\end{equation}

Because $D^{-1}(\alpha)$ is holomorphic in $B_{\delta_j}(\alpha_j)\setminus\{\alpha_j\}$, this expansion holds for every $\alpha \in B_{\delta_j}(\alpha_j)\setminus\{\alpha_j\}$. The following lemma describes the properties of the negative‑power coefficients $\{D_{j,m}\}_{m=-r_{j,1}}^{-1}$ in this expansion.

\begin{lemma}\label{lemma: Dinv-negative-rank}
    For each $m=-r_{j,1},\cdots,-1$, $D_{j,m}$ is a finite‑rank operator, and
    $$ 
    \dim\bigl(\operatorname{Ran}(D_{j,m})\bigr) \le \sum_{i=1}^{n_j} r_{j,i}.
    $$
\end{lemma}

\begin{proof}
    In $B_{\delta_j^*}(\alpha_j)$, the operators $F_j^{-1}(\alpha)$ and $E_j^{-1}(\alpha)$ are holomorphic. We expand them as
    $$
    F_j^{-1}(\alpha)=\sum_{m=0}^{\infty}(\alpha-\alpha_j)^mF_{j,m}, \qquad 
    E_j^{-1}(\alpha)=\sum_{m=0}^{\infty}(\alpha-\alpha_j)^mE_{j,m} .
    $$
    The series converge in operator norm in $B_{\delta_j^*}(\alpha_j)$. For $m=-r_{j,1},\dots,-1$, comparing the coefficients on both sides of \eqref{Dinv-decom} gives
    $$
    D_{j,m}= \sum_{i=1}^{n_j} 
             \sum_{\substack{m_e,m_f \in \mathbb{N}_0\\ m_e+m_f-r_{j,i}=m}} 
             F_{j,m_f}P_{j,i}^{\operatorname{Ker}}E_{j,m_e}.
    $$
    Every term in the sum on the right‐hand side has rank at most one. The total number of terms equals 
    $\sum_{i=1}^{n_j}\max\{0,\,m+r_{j,i}+1\}$. Hence $D_{j,m}$ is a finite‑rank operator, and
    $$
    \dim\bigl(\operatorname{Ran}(D_{j,m})\bigr) 
        \le \sum_{i=1}^{n_j}\max\{0,\,m+r_{j,i}+1\}
        \le \sum_{i=1}^{n_j} r_{j,i}.
    $$\end{proof}
We now introduce the notion of multiplicity for the subsequent discussion.  
\begin{definition}\cite{Ammari2010}
  For a fixed $\alpha_{0} \in \mathbb{C}$, let $S(\alpha) \in \mathcal{U}(\alpha_0)$ be an operator-valued function that admits $\alpha_0$ as an eigenvalue. We assume that $\dim \operatorname{Ker}(S(\alpha_{0}))<\infty$, $\operatorname{rank}(\phi_{0})<\infty,\, \forall 0 \neq\phi_{0}\in \operatorname{Ker}(S(\alpha_{0}))$. A canonical system of eigenvectors $\phi_{0}^{m},m=1,\cdots,\dim(\operatorname{Ker}(S(\alpha_0)))$ of $\operatorname{Ker}(S(\alpha))$ associated to $\alpha_{0}$ means that $\operatorname{rank}(\phi_{0}^{m})$ is the largest in the direct complement in $\operatorname{Ker}(S(\alpha_{0}))$ of the space spanned by $\{\phi_{0}^{n}\}_{n=1}^{m-1}$. For any canonical system of eigenvectors in
$\operatorname{Ker}(S(\alpha_{0}))$, the null-multiplicity of $S(\alpha)$ at $\alpha_{0}$ is denoted by
 \begin{equation}\label{nullmulti}
    N(S(\alpha_{0})):=\sum_{m=1}^{\dim(\operatorname{Ker}(S(\alpha_0)))}\operatorname{rank}(\phi_{0}^{m}).
    \end{equation}
    If $\alpha_{0}$ is not an eigenvalue of $S(\alpha)$, we let $N(S(\alpha_{0}))=0$. If $S^{-1}(\alpha)$ is well-defined, and $S^{-1}(\alpha)\in \mathcal{U}(\alpha_{0})$, we define the multiplicity of $S(\alpha)$ at $\alpha_{0}$ by
    \begin{equation*}   M(S(\alpha_{0}))=N(S(\alpha_{0}))-N(S^{-1}(\alpha_{0})).
    \end{equation*}
\end{definition}
\begin{definition}\cite{Ammari}\label{normality}
    For any simply connected region $\mathcal{V}\subset\mathbb{C}$ that have rectifiable boundary $\partial \mathcal{V}$, define
    \begin{eqnarray*}
        &&\mathcal{E}(\mathcal{V}):=\{S(\alpha)\in \mathcal{L}(H^{1}_{per}(\Omega_{H}))|\textit{$S(\alpha)$ is finitely meromorphic for $\alpha\in \mathcal{V}$}, \\
        && \qquad\qquad\textit{ is of Fredholm-type
       and is continuous on $\partial \mathcal{V}$.}\}.
    \end{eqnarray*}
    If $S(\alpha)\in \mathcal{E}(\mathcal{V})$ is invertible in $\bar{\mathcal{V}}$, except for a finite number of points of $\mathcal{V}$ which are normal points of $S(\alpha)$, then $S(\alpha)$ is called normal with respect to $\partial \mathcal{V}$. We denote $\hat{\alpha}_{i}, i=1, \cdots, n$ the normal points of $S(\alpha)$ in $\mathcal{V}$, we can define the multiplicity of $S(\alpha)$ with respect to $\partial \mathcal{V}$
    \begin{equation*}
        \mathcal{M}(S(\alpha);\partial \mathcal{V}):=\sum_{i=1}^{n}M(S(\hat{\alpha}_{i})).
    \end{equation*}
\end{definition}
For any $\alpha_{j}\in P(k)$,  the operator $D(\alpha)$ is normal with respect to  $\partial B_{\delta_{PML}}(\alpha_{j})$.  In addition, $\alpha_{j}$ is the only normal point of $D(\alpha)$ in $B_{\delta_{PML}}(\alpha_{j})$.  It is evident that the basis $\{\phi_{j,i}\}_{i=1}^{n_j}$ of $\operatorname{Ker}(D(\alpha_j))$ defined in Corollary~\ref{rankvalue} forms a canonical system of eigenvectors associated with the eigenvalue $\alpha_{j}$. Let $m_j=\mathcal{M}(D(\alpha);\partial B_{\delta_{PML}}(\alpha_{j}))$, we have
		\begin{equation}m_{j}=M(D(\alpha_{j}))=-M(D^{-1}(\alpha_j))=N(D(\alpha_{j}))=\sum_{i=1}^{n_{j}}r_{j,i}.\label{regmult}
		\end{equation}   

	\section{Analysis of the eigenvalue perturbation}\label{sect5}
	We also need some results from Gohberg and Sigal theory\cite{Gohberg, Krein, Ammari}, mainly the following generalized Rouche's theorem and generalized argument theorem.   
	\begin{lemma}\label{rouche}(Generalized Rouche's theorem) Assume that $V\subset \mathbb{C}$ is a simply connected region with rectifiable boundary and $S(\alpha)\in \mathcal{L}(H^1_{per}(\Omega_{H}))$ is normal with respect to $\partial V$, If the operator-valued function $W(\alpha)$ is finitely meromorphic in $V$, is continuous on $\partial V$ and satisfies 
			$$\Vert S^{-1}(\alpha)W(\alpha)\Vert _{*}<1, \alpha\in \partial V. $$
	It follows that $S(\alpha)+W(\alpha)$ is normal with respect to $\partial V$, and
		\begin{equation*}
			\mathcal{M}(S(\alpha);\partial V) = \mathcal{M}(S(\alpha)+W(\alpha);\partial V).
		\end{equation*}    
	\end{lemma}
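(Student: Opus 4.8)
This is the operator-valued Rouché theorem in the Gohberg--Sigal tradition, and the cleanest route is a one-parameter homotopy that leans on the generalized argument principle as a black box rather than reproving it. The starting observation is that, on $\partial V$ where $S(\alpha)$ is invertible, one has the factorization
\[
S(\alpha)+tW(\alpha)=S(\alpha)\bigl(I+t\,S^{-1}(\alpha)W(\alpha)\bigr),\qquad t\in[0,1].
\]
Since $\|S^{-1}(\alpha)W(\alpha)\|_*<1$ on $\partial V$, the factor $I+t\,S^{-1}(\alpha)W(\alpha)$ is boundedly invertible for every $\alpha\in\partial V$ and every $t\in[0,1]$, hence so is $A_t(\alpha):=S(\alpha)+tW(\alpha)$; thus the whole family $A_t$ is invertible and continuous on $\partial V$, uniformly in $t$.

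The second step is to check that each $A_t$ is finitely meromorphic and of Fredholm type in $V$, so that it is normal with respect to $\partial V$ and $\mathcal{M}(A_t;\partial V)$ is well defined. By Gohberg--Sigal theory the inverse $S^{-1}(\alpha)$ of the normal function $S(\alpha)$ is again finitely meromorphic and of Fredholm type on a neighbourhood of $\overline{V}$, with poles exactly at the normal points of $S$; since $W$ is finitely meromorphic with finite-rank principal parts, the product $S^{-1}(\alpha)W(\alpha)$ is finitely meromorphic, hence so are $I+t\,S^{-1}(\alpha)W(\alpha)$ and finally $A_t(\alpha)=S(\alpha)\bigl(I+t\,S^{-1}(\alpha)W(\alpha)\bigr)$. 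A finitely meromorphic Fredholm-type function that is invertible on $\partial V$ has only isolated, hence finitely many, non-invertibility points inside $V$, so each $A_t$ is normal with respect to $\partial V$; in particular $A_1=S+W$ is, which is the first assertion.

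The last step uses the generalized argument principle: for an operator function normal with respect to $\partial V$,
\[
\mathcal{M}(A_t;\partial V)=\frac{1}{2\pi i}\,\mathrm{tr}\oint_{\partial V}A_t^{-1}(\alpha)\,A_t'(\alpha)\,d\alpha,
\]
the contour integral being a finite-rank operator because its residues at the finitely many interior singular points are finite rank, by the local Smith-type factorization $A_t(\alpha)=E(\alpha)\,\Delta(\alpha)\,F(\alpha)$ with $E,F$ holomorphic invertible and $\Delta$ a finite-rank perturbation of the identity built from powers of $(\alpha-\alpha_0)$. On the compact set $[0,1]\times\partial V$ the maps $A_t$, $A_t'$ and $A_t^{-1}$ vary continuously with $\|A_t^{-1}\|_*$ uniformly bounded, so $t\mapsto\mathcal{M}(A_t;\partial V)$ is continuous; being integer valued it is constant, whence $\mathcal{M}(S+W;\partial V)=\mathcal{M}(A_1;\partial V)=\mathcal{M}(A_0;\partial V)=\mathcal{M}(S;\partial V)$.

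The main obstacle is the bookkeeping in the second and third steps: one must know that the class of operator functions finitely meromorphic and of Fredholm type in $V$ and invertible on $\partial V$ is stable under inversion of a normal function, under multiplication by a finitely meromorphic function, and under adding the identity, and that the argument-principle integral is genuinely finite rank with a trace depending continuously on the parameter. These are precisely the ingredients of the Gohberg--Sigal local theory --- canonical systems of root functions and the local factorization $A(\alpha)=E(\alpha)\Delta(\alpha)F(\alpha)$ --- which I would quote from \cite{Ammari2010,MENNICKEN2003} rather than reestablish; the homotopy argument itself is then routine. We note that in the application in Section \ref{sect5} the perturbation $W=D_\sigma(\alpha)-D(\alpha)$ is holomorphic and compact-operator valued, so $I+t\,S^{-1}W$ is a holomorphic Fredholm function of index $0$ for every $t$ and all these verifications are immediate.
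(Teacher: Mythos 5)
The paper never proves this lemma: it is imported as a known result from the Gohberg--Sigal theory (the sentence introducing it says ``for a proof, see e.g.\ \cite{Ammari}''), so there is no in-paper argument to compare yours against. Your homotopy proof is the classical route of \cite{Gohberg,Ammari}: factor $S(\alpha)+tW(\alpha)=S(\alpha)\bigl(I+tS^{-1}(\alpha)W(\alpha)\bigr)$, obtain invertibility on $\partial V$ uniformly in $t\in[0,1]$ from the Neumann series, and conclude from integer-valuedness plus continuity of the logarithmic-residue trace. As a reconstruction of the cited result this is the right skeleton, and, as you note yourself, in the only place the paper invokes the lemma the perturbation $W=D_\sigma(\alpha)-D(\alpha)$ is holomorphic and compact-operator valued, so the delicate verifications indeed collapse there.

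The one step you should not label routine is the normality of $A_t=S+tW$ inside $V$. Finite meromorphy of $S^{-1}W$ does not yield the Fredholm-type property: at a regular interior point, $I+tS^{-1}(\alpha)W(\alpha)$ is the identity plus an arbitrary bounded operator, and the boundary bound $\|S^{-1}(\alpha)W(\alpha)\|_*<1$ does not propagate into $V$ because $S^{-1}W$ is only meromorphic, so no maximum principle applies. Consequently your statement that ``a finitely meromorphic Fredholm-type function invertible on $\partial V$ has only finitely many non-invertibility points in $V$'' presupposes exactly the Fredholm-type property that has to be established for $A_t$; this is the part that genuinely requires the Gohberg--Sigal local theory (or, in the paper's application, compactness of $W$, which makes $I+tS^{-1}W$ a holomorphic Fredholm family of index $0$), and it should be quoted as such rather than derived from the product structure alone. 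Two smaller glosses: the argument-principle integral uses $A_t'(\alpha)$ on $\partial V$, while $S$ and $W$ are only assumed continuous there (either integrate over a slightly shrunk contour or assume meromorphy in a neighborhood of $\overline{V}$), and the step ``isolated, hence finitely many'' needs the continuity and invertibility on $\partial V$ together with compactness of $\overline{V}$ to exclude accumulation of characteristic values at the boundary.
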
  
    For a proof of Lemma \ref{rouche}, we refer the reader to, e.g., \cite{Ammari}.
	\begin{lemma}\label{multiplicity}
		Under Assumptions \ref{keyasump}, for all $\alpha_{j}\in P(k)$ and for $|\sigma|$ large enough, $D_{\sigma}$ is normal with respect to $\partial B_{\delta_{PML}}(\alpha_{j})$. Furthermore,
		\begin{equation*}
			\mathcal{M}(D_{\sigma}(\alpha);\partial B_{\delta_{PML}}(\alpha_{j}))=m_{j}.
		\end{equation*}     
	\end{lemma}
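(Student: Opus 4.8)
The plan is to apply the generalized Rouch\'e theorem (Lemma \ref{rouche}) with $S(\alpha)=D(\alpha)$ and $W(\alpha)=D_\sigma(\alpha)-D(\alpha)$ on the region $V=B_{\delta_{PML}}(\alpha_j)$. By Theorem \ref{thm5}, $D(\alpha)$ is normal with respect to $\partial B_{\delta_{PML}}(\alpha_j)$ with $\mathcal{M}(D(\alpha);\partial B_{\delta_{PML}}(\alpha_j))=m_j$, so once the hypotheses of Lemma \ref{rouche} are verified, the conclusion $\mathcal{M}(D_\sigma(\alpha);\partial B_{\delta_{PML}}(\alpha_j))=m_j$ and the normality of $D_\sigma$ follow immediately.

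First I would check that $W(\alpha)=D_\sigma(\alpha)-D(\alpha)$ meets the structural requirements of Lemma \ref{rouche}: it must be finitely meromorphic in $V$ and continuous on $\partial V$. In fact, on $\overline{B_{\delta_{PML}}(\alpha_j)}$ both $D(\alpha)$ and $D_\sigma(\alpha)$ are analytic (the $\coth$ factors in the PML DtN operator are analytic as long as $\sigma^\pm$ and the $\beta_n$ stay away from the poles of $\coth$, which holds for $\alpha$ in this small ball by the branch-cut choice and the definition of $\delta_{PML}$), so $W(\alpha)$ is actually holomorphic there, hence certainly finitely meromorphic and continuous on the boundary. Next I would estimate $\|D^{-1}(\alpha)W(\alpha)\|_*$ on $\partial B_{\delta_{PML}}(\alpha_j)$. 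By Corollary \ref{analyprop} and Theorem \ref{thm5}, $D(\alpha)$ is invertible and continuous on the compact set $\partial B_{\delta_{PML}}(\alpha_j)$, so $\sup_{\alpha\in\partial B_{\delta_{PML}}(\alpha_j)}\|D^{-1}(\alpha)\|_*\le C_j<\infty$. On the other hand, Theorem \ref{expdecay} gives $\|D(\alpha)-D_\sigma(\alpha)\|_*\le C e^{-\gamma\sqrt{k\delta_{PML}}|\sigma|}$ uniformly for $\alpha\in\mathcal{V}\supset\partial B_{\delta_{PML}}(\alpha_j)$. Combining, $\|D^{-1}(\alpha)W(\alpha)\|_*\le C_j\, C\, e^{-\gamma\sqrt{k\delta_{PML}}|\sigma|}$, which is strictly less than $1$ for $|\sigma|$ large enough, uniformly in $\alpha$ on the contour.

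Applying Lemma \ref{rouche} with these choices then yields at once that $D_\sigma(\alpha)=D(\alpha)+W(\alpha)$ is normal with respect to $\partial B_{\delta_{PML}}(\alpha_j)$ and that $\mathcal{M}(D_\sigma(\alpha);\partial B_{\delta_{PML}}(\alpha_j))=\mathcal{M}(D(\alpha);\partial B_{\delta_{PML}}(\alpha_j))=m_j$, completing the proof. I would be a little careful that the threshold on $|\sigma|$ can be taken uniform over the finitely many $\alpha_j\in P(k)$ and over the full boundary circle, but since $P(k)$ is finite (Lemma \ref{finiteness}) and each contour is compact, taking the maximum of the finitely many thresholds suffices. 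The only genuinely delicate point is ensuring the supremum of $\|D^{-1}(\alpha)\|_*$ on the contour is finite and that $W$ has no poles inside $B_{\delta_{PML}}(\alpha_j)$; the first follows from continuity of $D^{-1}$ on the compact punctured-then-boundary set guaranteed by Corollary \ref{analyprop}, and the second from the analyticity of the PML sesquilinear form on this ball for $|\sigma|$ in the admissible sector of Lemma \ref{lemma1}.
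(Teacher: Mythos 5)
Your proposal is correct and follows essentially the same route as the paper: the paper likewise applies the generalized Rouch\'e theorem with $S(\alpha)=D(\alpha)$ and $W(\alpha)=\Delta_\sigma D(\alpha)$, using the normality of $D(\alpha)$ from Theorem \ref{thm5}, the holomorphy (hence finite meromorphy) of $\Delta_\sigma D(\alpha)$ in $B_{\delta_{PML}}(\alpha_j)$, a uniform bound $M_{\delta_{PML}}$ on $\|D^{-1}(\alpha)\|_*$ over the contours, and the exponential decay from Theorem \ref{expdecay} to make $\|D^{-1}(\alpha)\Delta_\sigma D(\alpha)\|_*<1$ for $|\sigma|$ large. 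Your additional remarks on uniformity over the finitely many $\alpha_j$ and on the absence of poles of the PML $\coth$ factors are consistent with, and slightly more explicit than, the paper's argument.
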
    
	\begin{proof}
		We denote 
			$$C_{\delta_{PML}}=\max_{j\in\mathcal{J}}\max_{\alpha\in \partial B_{\delta_{PML}}(\alpha_{j})} \{ \Vert D^{-1}(\alpha)\Vert _{*} \}, $$
		and let $\Delta_{\sigma}(\alpha)=D_\sigma(\alpha)-D(\alpha)$.   
		By the definition of $\delta_{PML}$ in \eqref{deltaPML}, $\Delta_{\sigma}(\alpha)$ is holomorphic in $B_{\delta_{PML}}(\alpha_{j})$, thus it is finitely meromorphic. Furthermore, it is continuous on $\partial B_{\delta_{PML}}(\alpha_{j})$. By Theorem \ref{expdecay}, $\Vert \Delta_{\sigma}(\alpha)\Vert _{*}$ is of exponential decay in $B_{\delta_{PML}}(\alpha_{j})$ with respect to $|\sigma|$. Taking $|\sigma|$ large enough, such that for all $\alpha\in\partial B_{\delta_{PML}}(\alpha_{j})$ and $j\in \mathcal{J}$, $\Vert D^{-1}(\alpha)\Delta_{\sigma}(\alpha)\Vert _{*}<C_{\delta_{PML}}\Vert \Delta_{\sigma}(\alpha)\Vert _{*}<1$. Since $D(\alpha)$ is normal with respect to $\partial B_{\delta_{PML}}(\alpha_{j})$. Using Lemma \ref{rouche} for $D(\alpha)$ and $\Delta_{\sigma}(\alpha)$, we obtain that  
		\begin{eqnarray*}
		&&	\mathcal{M}(D_\sigma(\alpha);\partial B_{\delta_{PML}}(\alpha_{j})) = \mathcal{M}(D(\alpha)+\Delta_{\sigma}(\alpha);\partial B_{\delta_{PML}}(\alpha_{j}))\\
		&&~~\qquad	=\mathcal{M}(D(\alpha);\partial B_{\delta_{PML}}(\alpha_{j}))=m_j.
		\end{eqnarray*} 
		This finishes the proof.    
        \end{proof}  
We further need the following definition from \cite{Ammari} of the trace of a finite rank operator, 
\begin{definition}\label{deftrace}
    Given a linear operator $F$ that has a finite rank in the Hilbert space $\mathcal{H}$, it has an inner product, $(\cdot,\cdot)_{\mathcal{H}}$ with norm $\|\cdot\|_{\mathcal{H}}$. Thus, there exists a finite-dimensional invariant subspace $\mathcal{C}$ of $F$ such that $F$ annihilates some direct complement of $\mathcal{C}$ in $\mathcal{H}$. The restriction of $F$ is denoted by $F|_{\mathcal{C}}$, the trace of $F|_{\mathcal{C}}$ is given by \cite{Kato1995}
    \begin{equation*}
trF=tr(F|_{\mathcal{C}}).
    \end{equation*}
  Taking an orthonormal basis $\{\phi_\mathcal{C}^j\}_{j=1}^{\dim(\mathcal{C})}$ of $\mathcal{C}$, we have
     \begin{equation}\label{fintrace}
    trF=\sum_{j=1}^{\dim(\mathcal{C})}(F\phi_{\mathcal{C}}^{j},\phi_{\mathcal{C}}^{j})_{\mathcal{H}}.
    \end{equation}
\end{definition}
Thus we can state the generalized argument theorem, see \cite{Gohberg} for a proof.
	\begin{lemma}\label{residue} (Generalized argument theorem) Assume that $S(\alpha)$ is normal with respect to $\partial V$. If $f(\alpha)$ is holomorphic in $V$, and is continuous in $\bar{V}$, we have 
		\begin{equation*}
			\frac{1}{2\pi i}tr\int_{\partial V}f(\alpha)S^{-1}(\alpha)\frac{d}{d\alpha}S(\alpha)d\alpha = \sum_{i=1}^{\kappa}M(S(\beta_{j}))f(\beta_{j}).
		\end{equation*}    Here    $\beta_{j}$, $j=1$, $\cdots$, $\kappa$ are poles or eigenvalues of $S(\alpha)$ in $V$.  
	\end{lemma}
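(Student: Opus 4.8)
The plan is to localize the integral at the finitely many normal points $\beta_1,\dots,\beta_\kappa$ of $S(\alpha)$ lying in $V$ and to evaluate each local contribution by means of the Gohberg--Sigal local factorization (Smith-type normal form) of $S$. Since $S(\alpha)$ is normal with respect to $\partial V$, it is invertible and holomorphic on $\partial V$ and on $V\setminus\{\beta_1,\dots,\beta_\kappa\}$ and finitely meromorphic of Fredholm type at each $\beta_j$; consequently $S^{-1}(\alpha)S'(\alpha)$ is holomorphic on $V\setminus\{\beta_1,\dots,\beta_\kappa\}$, continuous on $\partial V$, and finitely meromorphic at each $\beta_j$ (its principal part there is a finite-rank operator). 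First I would deform the contour: by Cauchy's theorem for operator-valued holomorphic functions, replace $\partial V$ by a union of small, positively oriented circles $\gamma_j=\partial B_\varepsilon(\beta_j)$, so that
\[
\int_{\partial V}f(\alpha)S^{-1}(\alpha)S'(\alpha)\,d\alpha=\sum_{j=1}^{\kappa}\int_{\gamma_j}f(\alpha)S^{-1}(\alpha)S'(\alpha)\,d\alpha .
\]
On each $\gamma_j$ the holomorphic part of the integrand integrates to $0$ (as $f$ is analytic in $V$), so each summand is the integral of a finite-rank valued function; hence it is a finite-rank operator whose trace is well defined, and the trace may be taken inside the integral.

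Next I would carry out the local computation at a fixed $\beta_j$. Write, near $\beta_j$, the Gohberg--Sigal factorization
\[
S(\alpha)=E_j(\alpha)\,D_j(\alpha)\,F_j(\alpha),\qquad D_j(\alpha)=P_0^j+\sum_{m=1}^{n_j}(\alpha-\beta_j)^{\kappa_m^j}P_m^j,
\]
where $E_j,F_j$ are holomorphic and boundedly invertible on $B_\varepsilon(\beta_j)$, the $P_m^j$ ($m\ge 1$) are rank-one projections with $P_i^jP_m^j=\delta_{im}P_m^j$, $P_0^j=I-\sum_m P_m^j$, and the partial multiplicities satisfy $\sum_{m}\kappa_m^j=N(S(\beta_j))-N(S^{-1}(\beta_j))=M(S(\beta_j))$. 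Then
\[
S^{-1}S'=F_j^{-1}\bigl(D_j^{-1}D_j'\bigr)F_j+F_j^{-1}\bigl(D_j^{-1}(E_j^{-1}E_j')D_j\bigr)F_j+F_j^{-1}F_j'.
\]
The last summand is holomorphic at $\beta_j$ and contributes nothing. A direct calculation using $P_i^jP_m^j=\delta_{im}P_m^j$ gives $D_j^{-1}D_j'=(\alpha-\beta_j)^{-1}\sum_m\kappa_m^jP_m^j$, and shows that the negative-power part of $D_j^{-1}(E_j^{-1}E_j')D_j$ is a sum of terms $(\alpha-\beta_j)^{-\ell}P_m^j X(\alpha)P_n^j$ with $m\ne n$ (or one index equal to $0$), each of which has vanishing trace because $P_n^jP_m^j=0$. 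Hence, using cyclicity of the trace (legitimate since after integration we act on finite-rank operators), $\operatorname{tr}P_m^j=1$, and the scalar residue theorem,
\[
\frac{1}{2\pi i}\operatorname{tr}\int_{\gamma_j}f(\alpha)S^{-1}(\alpha)S'(\alpha)\,d\alpha
= f(\beta_j)\sum_{m=1}^{n_j}\kappa_m^j = f(\beta_j)\,M(S(\beta_j)),
\]
and summing over $j$ yields the assertion.

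The hard part is the input to the second step: the existence of the local Gohberg--Sigal normal form for a finitely meromorphic operator function of Fredholm type $0$, together with the identification of the exponents $\kappa_m^j$ of that normal form with the previously defined null- and pole-multiplicities, i.e.\ that the positive $\kappa_m^j$ sum to $N(S(\beta_j))$ and the negative ones to $-N(S^{-1}(\beta_j))$. This relies on the Gohberg--Sigal machinery (cf.\ \cite{Gohberg}); granted it, the trace bookkeeping above is routine, and the contour deformation together with the interchange of trace and integral are standard once the finite-rank structure of the principal parts is noted.
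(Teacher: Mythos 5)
Your argument is correct and is essentially the classical Gohberg--Sigal proof; the paper itself does not prove this lemma but refers to \cite{Gohberg} (and \cite{Ammari}), which is exactly the route you reconstruct: contour localization at the normal points, the local factorization $S=E_jD_jF_j$ with $D_j(\alpha)=P_0^j+\sum_m(\alpha-\beta_j)^{\kappa_m^j}P_m^j$, the computation of the logarithmic derivative, and the trace bookkeeping showing only the diagonal term $D_j^{-1}D_j'$ contributes, giving $f(\beta_j)\sum_m\kappa_m^j=f(\beta_j)M(S(\beta_j))$. The one point you leave to citation --- existence of the local normal form for a finitely meromorphic Fredholm-type function regular in a punctured neighborhood, and the identification of the exponents $\kappa_m^j$ with the null- and pole-multiplicities so that $\sum_m\kappa_m^j=M(S(\beta_j))$ --- is precisely the content of the Gohberg--Sigal machinery the paper also invokes, so your level of detail matches the paper's; just take care that the cyclicity-of-trace step is justified at the level of Laurent coefficients (only the finite-rank principal-part coefficients enter the residue), as the pointwise integrand is not trace class.
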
 
	Using the above generalized residue theorem, taking $S(\alpha)=D_{\sigma}(\alpha)$, $f(\alpha)=\alpha-\alpha_{j}$, for $m_{j}=1$, we have 
	\begin{equation}
		(\alpha^{\sigma}_{j}-\alpha_{j})=\frac{1}{2\pi i} tr\int_{\partial B_{\delta_{PML}}(\alpha_{j})}(\alpha-\alpha_{j})(D_{\sigma})^{-1}(\alpha)\frac{d}{d\alpha}D_{\sigma}(\alpha)d\alpha . \label{tracest}
	\end{equation}  
	
	If $m_{j}>1$, by Lemma \ref{multiplicity}, for $|\sigma|$ large enough, all the eigenvalues $D_{\sigma}(\alpha)$ in $B_{\delta_{PML}}(\alpha_{j})$ are denoted by $\alpha_{i,j}^{\sigma}$, $i=1$, $\cdots$, $n_{j}^{\sigma}$, we further have $\sum_{i=1}^{n_{j}^{\sigma}}M(D_{\sigma}(\alpha_{i,j}^{\sigma}))=m_{j}$, thus
	\begin{equation}
		\begin{aligned}
   &\sum_{i=1}^{n_{j}^{\sigma}}M(D_{\sigma}(\alpha^{\sigma}_{i,j}))(\alpha^{\sigma}_{i,j}-\alpha_{j})\\
   =&\frac{1}{2\pi i} tr\int_{\partial B_{\delta_{PML}}(\alpha_{j})}(\alpha-\alpha_{j})(D_{\sigma})^{-1}(\alpha)\frac{d}{d\alpha}D_{\sigma}(\alpha)d\alpha . \end{aligned}\label{multracest}
	\end{equation}  
	To complete the perturbation analysis, we need to give precise estimates on the right-hand side of \eqref{tracest}, \eqref{multracest}. For the computation of trace operators, we present the following lemma:
    \begin{lemma}\label{lemma:tracecal}
 Let $\mathcal{H}$ be a Hilbert space equipped with the inner product $(\cdot,\cdot)_{\mathcal{H}}$ and the induced norm $\|\cdot\|_{\mathcal{H}}$. For any bounded linear operator $\mathcal{T} \in \mathcal{L}(\mathcal{H})$ with $\mathcal{T}$ finite-rank, we have
\begin{equation}\label{trace-estimate}
    |\operatorname{tr} \mathcal{T}| \le 2\dim(\operatorname{Ran}(\mathcal{T}))  \|\mathcal{T}\|_{\mathcal{H}}, 
\end{equation}
here $\|\mathcal{T}\|_{\mathcal{H}}$ is the operator norm.
    \end{lemma}
    \begin{proof}
Since $\mathcal{T}$ is finite-rank, its adjoint $\mathcal{T}^*$ is also finite-rank, and $\operatorname{Ran}(\mathcal{T}^*)$ is the orthogonal complement of $\operatorname{Ker}(\mathcal{T})$, i.e., $\mathcal{H}=\operatorname{Ran}(\mathcal{T}^*) \oplus \operatorname{Ker}(\mathcal{T})$, with
$$
(x,y)_{\mathcal{H}}=0,\quad \forall x \in \operatorname{Ran}(\mathcal{T}^*), \, y \in \operatorname{Ker}(\mathcal{T}).
$$
Consider $\mathcal{C}=\operatorname{span}\{ \mathcal{T}^m x \mid x \in \operatorname{Ran}(\mathcal{T}^*), \, m \in \mathbb{N}_0 \}.$
It is easy to verify that $\operatorname{Ran}(\mathcal{T}|_\mathcal{C}) \subset \mathcal{C} \subset \operatorname{Ran}(\mathcal{T}^*) + \operatorname{Ran}(\mathcal{T})$, hence $\mathcal{C}$ is a finite-dimensional invariant subspace of $\mathcal{T}$, and $\dim (\mathcal{C}) \le 2\dim(\operatorname{Ran}(\mathcal{T}))$. Therefore, $\mathcal{C} \cap \operatorname{Ker}(\mathcal{T})$ is a finite-dimensional subspace of the closed subspace $\operatorname{Ker}(\mathcal{T})$ of $\mathcal{H}$, and thus it admits an orthogonal complement within $\operatorname{Ker}(\mathcal{T})$. That is, there exists a closed subspace $\mathcal{D}$ of $\mathcal{H}$ such that 
$$
\operatorname{Ker}(\mathcal{T}) = (\operatorname{Ker}(\mathcal{T}) \cap \mathcal{C}) \oplus \mathcal{D}
$$
and
$$
(x,y)_{\mathcal{H}} = 0, \quad \forall x \in (\operatorname{Ker}(\mathcal{T}) \cap \mathcal{C}), \, y \in \mathcal{D}.
$$
Consequently,
\begin{equation*}
\mathcal{H} = \operatorname{Ran}(\mathcal{T}^*) \oplus \operatorname{Ker}(\mathcal{T}) 
            = \operatorname{Ran}(\mathcal{T}^*) \oplus (\operatorname{Ker}(\mathcal{T}) \cap \mathcal{C}) \oplus \mathcal{D} 
            \subset \mathcal{C} + \mathcal{D} \subset \mathcal{H}.
\end{equation*}
Moreover,
$$
\mathcal{C} \cap \mathcal{D} = \mathcal{C} \cap (\mathcal{D} \cap \operatorname{Ker}(\mathcal{T})) = (\mathcal{C} \cap \operatorname{Ker}(\mathcal{T})) \cap \mathcal{D} = \{0\}.
$$
Hence $\mathcal{H} = \mathcal{C} \oplus \mathcal{D}$. Since $\mathcal{C}$ is a finite-dimensional invariant subspace of $\mathcal{T}$ and $\mathcal{T}$ vanishes on $\mathcal{D}$, we have $\operatorname{tr}\mathcal{T} = \operatorname{tr}(\mathcal{T}|_{\mathcal{C}})$. Taking an orthonormal basis $\{\phi_\mathcal{C}^i\}_{i=1}^{\dim(\mathcal{C})}$ of $\mathcal{C}$, we obtain
\begin{equation*}
|\operatorname{tr}\mathcal{T}| = \Bigl| \sum_{i=1}^{\dim(\mathcal{C})} (\mathcal{T}\phi_{\mathcal{C}}^{i},\phi_{\mathcal{C}}^{i})_{\mathcal{H}} \Bigr| \le \dim (\mathcal{C}) \, \|\mathcal{T}\|_{\mathcal{H}} \le 2\dim(\operatorname{Ran}(\mathcal{T}))\,\|\mathcal{T}\|_{\mathcal{H}}.
\end{equation*}
\end{proof}

  We present the following trace estimate. 
	\begin{lemma}\label{traceineq}  
		For all $p \in \mathbb{Z}^+$ and $\alpha_{j}\in P(k)$, if $\{S_i(\alpha)\}_{i=1}^p$ are holomorphic on $\overline{B_{\delta_{PML}}(\alpha_{j})}$, we define
$$
H(\alpha)=D^{-1}(\alpha)S_1(\alpha)D^{-1}(\alpha) S_2(\alpha) \cdots D^{-1}(\alpha)S_p(\alpha),
$$
then
\begin{equation*}
\left|\frac{1}{2\pi i}\operatorname{tr}\int_{\partial B_{\delta_{PML}}(\alpha_{j})}H(\alpha)\,d\alpha\right|
\leq C_1C_2^{\,p}\prod_{ i=1}^p\max_{\alpha \in \partial B_{\delta_{PML}}(\alpha_j)} \| S_i(\alpha) \|_*.
\end{equation*}
Here $C_1=C_1(m_j, \delta_{PML})=\frac12 m_j\delta_{PML}$ and $C_2=C_2(r_{j,1},C_{\delta_{PML}})=2^{r_{j,1}+2}C_{\delta_{PML}}$ are constants independent of $p$.
\end{lemma}

\begin{proof}
Recall the Laurent expansion \eqref{Dinv-laurant} and Lemma \ref{lemma: Dinv-negative-rank}. In $\overline{B_{\delta_{PML}}(\alpha_{j})}\setminus\{\alpha_j\}$ we have
$$
D^{-1}(\alpha)=\sum_{m=-r_{j,1}}^{\infty}(\alpha-\alpha_j)^mD_{j,m},
$$
and for $m=-r_{j,1},\dots,-1$, $
\dim\bigl(\operatorname{Ran}(D_{j,m})\bigr)\le \sum_{i=1}^{n_j} r_{j,i}=m_j.
$
For each $i=1,2,\dots,p$, expand the holomorphic function $S_i(\alpha)$ at $\alpha=\alpha_j$:
$$
S_i(\alpha)=\sum_{m=0}^{\infty}(\alpha-\alpha_j)^mS_{i,m},\qquad \alpha \in \overline{B_{\delta_{PML}}(\alpha_{j})}.
$$
Define the index set
$$
\mathcal{S}_{j,p}=\bigl\{(d_1,s_1,d_2,s_2,\cdots,d_p,s_p)\,\big|\,
\sum_{i=1}^p(d_i+s_i)=-1,\, s_i,d_i+r_{j,1} \in  \mathbb{N}_0\bigr\}.
$$
Then
$$
\frac{1}{2\pi i}\int_{\partial B_{\delta_{PML}}(\alpha_{j})}H(\alpha)\,d\alpha
= \sum_{(d_1,s_1,\cdots,d_p,s_p) \in \mathcal{S}_{j,p}}
D_{j,d_1}S_{1,s_1}D_{j,d_2}S_{2,s_2}\cdots D_{j,d_p}S_{p,s_p}.
$$
For a fixed $(d_1,s_1,\cdots,d_p,s_p) \in \mathcal{S}_{j,p}$, there exists some index $i_0 \in \{1,2,\cdots,p\}$ such that $d_{i_0}<0$. Hence
$$
\dim\bigl(\operatorname{Ran}(D_{j,d_1}S_{1,s_1}\cdots D_{j,d_p}S_{p,s_p})\bigr)
\le \dim\bigl(\operatorname{Ran}(D_{j,d_{i_0}})\bigr)
\le m_j.
$$
The cardinality of $\mathcal{S}_{j,p}$ is $\binom{pr_{j,1}+2p-2}{2p-1}$; therefore
$$
\dim\Bigl(\operatorname{Ran}\Bigl(\frac{1}{2\pi i}\int_{\partial B_{\delta_{PML}}(\alpha_{j})}H(\alpha)\,d\alpha\Bigr)\Bigr)
\le \binom{pr_{j,1}+2p-2}{2p-1}\,m_j
\le 2^{\,p(r_{j,1}+2)-2}\,m_j.
$$
Applying Lemma \ref{lemma:tracecal}, we obtain
\begin{align*}
&\quad \, \, \Bigl|\frac{1}{2\pi i}\operatorname{tr}\int_{\partial B_{\delta_{PML}}(\alpha_{j})}H(\alpha)\,d\alpha\Bigr| \\
&\le 2^{\,p(r_{j,1}+2)-1}\,m_j\,
\Bigl\|\frac{1}{2\pi i}\int_{\partial B_{\delta_{PML}}(\alpha_{j})}H(\alpha)\,d\alpha\Bigr\|_{*} \\
&\le 2^{\,p(r_{j,1}+2)-1}\,m_j\,
\frac{1}{2\pi}\int_{\partial B_{\delta_{PML}}(\alpha_{j})}\|H(\alpha)\|_{*}\,|d\alpha|  \\
&\le 2^{\,p(r_{j,1}+2)-1}\, m_j\,\delta_{PML}\,
\max_{\alpha \in \partial B_{\delta_{PML}}(\alpha_j)} \| H(\alpha) \|_* \\
&\le 2^{\,p(r_{j,1}+2)-1}\,m_j\,\delta_{PML}\,
C_{\delta_{PML}}^{\,p}\,
\prod_{i=1}^p \max_{\alpha \in \partial B_{\delta_{PML}}(\alpha_j)} \| S_i(\alpha) \|_*  \\
&= C_1\, C_2^{\,p}\,
\prod_{i=1}^p \max_{\alpha \in \partial B_{\delta_{PML}}(\alpha_j)} \| S_i(\alpha) \|_*,
\end{align*}
where $C_1=\frac12 m_j\delta_{PML}$ and $C_2=2^{r_{j,1}+2}C_{\delta_{PML}}$.
\end{proof}
Now we are ready to present the main perturbation theorem. 
	\begin{theorem}\label{singmult}(perturbation theorem for $m_{j}=1$) Under Assumptions \ref{keyasump}, for $m_{j}=1$ and large enough $|\sigma|$, we have the following perturbation estimate for $\alpha_{j}\in P(k)$,
		\begin{equation*}
			|\alpha^{\sigma}_{j}-\alpha_{j}|\leq C^*e^{-\gamma_{0}\sqrt{k\delta_{PML}}|\sigma|}.
	\end{equation*}   
    Here, $C^*=C^*(\delta_{PML},C_{\delta_{PML}},C_{exp})=8\delta_{PML}C_{\delta_{PML}}C_{exp}$. $\gamma_0$ and $C_{exp}$ are defined in Lemma \ref{lemma1} and Theorem \ref{expdecay}, respectively. 
\end{theorem}    
	\begin{proof}By Theorem \ref{expdecay}, we pick $|\sigma|$ large enough, such that $ \Vert \Delta_{\sigma}(\alpha)\Vert _{*}<\frac{1}{2C_2} < \frac{1}{2C_{\delta_{PML}}} $ for $\alpha \in \overline{ B_{\delta_{PML}}(\alpha_{j})}$, here $C_2=2^{r_{j,1}+2}C_{\delta_{PML}}$. Considering the following Neumann series expansion on $\partial B_{\delta_{PML}}(\alpha_{j})$: 
		\begin{eqnarray*}
			\nonumber(D_{\sigma})^{-1}(\alpha)=D^{-1}(\alpha)(I+\Delta_{\sigma}(\alpha)D^{-1}(\alpha))^{-1}=D^{-1}(\alpha)\sum_{p=0}^{\infty}(-\Delta_{\sigma}(\alpha)D^{-1}(\alpha))^{p},
		\end{eqnarray*}    
		 Using  \eqref{tracest} we have 
		\begin{eqnarray*}
	\alpha_{j}^{\sigma}-\alpha_{j}
            =\frac{1}{2\pi i}\sum_{p=0}^{\infty}tr\int_{\partial B_{\delta_{PML}}(\alpha_{j})}(\alpha-\alpha_{j})D^{-1}(\alpha)(-\Delta_{\sigma}(\alpha)D^{-1}(\alpha))^{p}\frac{d}{d\alpha}D_\sigma(\alpha)d\alpha. \\
        \end{eqnarray*}
Let  
\begin{align*}
    I_p &= \operatorname{tr}\int_{\partial B_{\delta_{PML}}(\alpha_{j})}
(\alpha-\alpha_{j})D^{-1}(\alpha)\bigl(-\Delta_{\sigma}(\alpha)D^{-1}(\alpha)\bigr)^{p}
\frac{d D(\alpha)}{d \alpha}\,d\alpha  \\
&=\operatorname{tr}\int_{\partial B_{\delta_{PML}}(\alpha_{j})}
(\alpha-\alpha_{j})\bigl(-D^{-1}(\alpha)\Delta_{\sigma}(\alpha)\bigr)^{p}D^{-1}(\alpha)
\frac{d D(\alpha)}{d \alpha}\,d\alpha.
\end{align*}
For $p\ge 1$, we apply Lemma \ref{traceineq} with $S_1(\alpha)=\cdots =S_{p}(\alpha)=-\Delta_{\sigma}(\alpha)$, $S_{p+1}(\alpha)=(\alpha-\alpha_j)\frac{d D(\alpha)}{d \alpha}$. Then we obtain 
$$ |I_p| \le 2 \pi C_1 C_2^{p+1} \max\limits_{\alpha\in\partial B_{\delta_{PML}}(\alpha_{j})}\Vert \Delta_{\sigma}(\alpha)\Vert _{*}^{p} \max\limits_{\alpha\in\partial B_{\delta_{PML}}(\alpha_{j})}\Vert(\alpha-\alpha_j)\frac{d D(\alpha)}{d \alpha} \Vert_*  .$$ 
Since $C_2\max\limits_{\alpha\in\partial B_{\delta_{PML}}(\alpha_{j})}\Vert \Delta_{\sigma}(\alpha)\Vert _{*} \le \frac12$, we have $\lim\limits_{p \to \infty} I_p =0$. Note that $$\frac{d}{d\alpha}D^{-1}(\alpha)=-D^{-1}(\alpha)\frac{d}{d\alpha}D(\alpha)D^{-1}(\alpha).$$  Therefore, for $p \ge 0$, integration by parts together with the properties of the operator trace \cite{Ammari2010} gives
\begin{align*}
  &\quad\,\, tr\int_{\partial B_{\delta_{PML}}(\alpha_{j})}\frac{1}{p+1}(-D^{-1}(\alpha)\Delta_{\sigma}(\alpha))^{p+1} d\alpha \\
  &=-tr\int_{\partial B_{\delta_{PML}}(\alpha_{j})}\frac{(\alpha-\alpha_{j})}{p+1}\frac{d}{d\alpha}(-D^{-1}(\alpha)\Delta_{\sigma}(\alpha))^{p+1} d\alpha\\
   &=   -tr\int_{\partial B_{\delta_{PML}}(\alpha_{j})}(\alpha-\alpha_{j})(-D^{-1}(\alpha)\Delta_{\sigma}(\alpha))^{p}\frac{d}{d\alpha}(-D^{-1}(\alpha)\Delta_{\sigma}(\alpha))d\alpha\\
    &=  tr\int_{\partial B_{\delta_{PML}}(\alpha_{j})}(\alpha-\alpha_{j})D^{-1}(\alpha)(-\Delta_{\sigma}(\alpha)D^{-1}(\alpha))^{p+1}\frac{d D(\alpha)}{d \alpha}d\alpha \\
     &\quad\,\,+tr\int_{\partial B_{\delta_{PML}}(\alpha_{j})}(\alpha-\alpha_{j})D^{-1}(\alpha)(-\Delta_{\sigma}(\alpha)D^{-1}(\alpha))^{p}\frac{d \Delta_{\sigma}(\alpha)}{d \alpha}d\alpha \\
     &=tr\int_{\partial B_{\delta_{PML}}(\alpha_{j})}(\alpha-\alpha_{j})D^{-1}(\alpha)(-\Delta_{\sigma}(\alpha)D^{-1}(\alpha))^{p}\frac{d}{d\alpha}D_\sigma(\alpha)d\alpha+I_{p+1}-I_p.
\end{align*}
Summing over all $p \ge 0$, we obtain
	\begin{align*}
	\alpha_{j}^{\sigma}-\alpha_{j}
            &=\frac{1}{2\pi i}\sum_{p=0}^{\infty}tr\int_{\partial B_{\delta_{PML}}(\alpha_{j})}(\alpha-\alpha_{j})D^{-1}(\alpha)(-\Delta_{\sigma}(\alpha)D^{-1}(\alpha))^{p}\frac{d}{d\alpha}D_\sigma(\alpha)d\alpha \\ 
            &=\frac{1}{2\pi i}I_0+\frac{1}{2\pi i}\sum_{p=1}^{\infty}tr\int_{\partial B_{\delta_{PML}}(\alpha_{j})}\frac{1}{p}(-D^{-1}(\alpha)\Delta_{\sigma}(\alpha))^{p} d\alpha. 
        \end{align*}

		Since $\alpha_{j}$  is the only propagative wave number of $D(\alpha)$ in    $B_{\delta_{PML}}(\alpha_{j})$, by Lemma \ref{residue} we can obtain
		\begin{equation*}
			\frac{1}{2\pi i}I_0=\frac{1}{2\pi i}tr\int_{\partial B_{\delta_{PML}}(\alpha_{j})}(\alpha-\alpha_{j})D^{-1}(\alpha)\frac{d}{d\alpha}(D(\alpha))d\alpha =0.
		\end{equation*}    
		By Theorem \ref{expdecay},
		\begin{equation*}
			\Vert \Delta_{\sigma}(\alpha)\Vert _{*}\leq C_{exp}\exp(-\gamma_{0}\sqrt{k\delta_{PML}}|\sigma|),
		\end{equation*}   
		Finally, since $C_2\max\limits_{\alpha\in\partial B_{\delta_{PML}}(\alpha_{j})}\Vert \Delta_{\sigma}(\alpha)\Vert _{*}<\frac12$, Lemma \ref{traceineq} gives 
		\begin{align*}
			|\alpha^{\sigma}_{j}-\alpha_{j}| & \le C_1\sum_{p=1}^{\infty} \bigl (C_2\max\limits_{\alpha\in\partial B_{\delta_{PML}}(\alpha_{j})}\Vert \Delta_{\sigma}(\alpha)\Vert _{*} \bigr)^p\\
            &\le 2C_1C_2\max\limits_{\alpha\in\partial B_{\delta_{PML}}(\alpha_{j})}\Vert \Delta_{\sigma} (\alpha)\Vert _{*} \\
            &\le 2C_1C_2C_{exp}\exp(-\gamma_{0}\sqrt{k\delta_{PML}}|\sigma|).
		\end{align*}
        In the case where $ m_j = 1 $, we have $ r_{j,1} = 1 $. Consequently, the constants $ C_1 $ and $ C_2 $ defined in Lemma \ref{traceineq} become $ C_1 = \frac{1}{2} \delta_{PML} $ and $ C_2 = 8 C_{\delta_{PML}} $. Therefore,
$$
|\alpha^{\sigma}_{j} - \alpha_{j}| \le 8 \delta_{PML} \, C_{\delta_{PML}} \, C_{exp} \, \exp( -\gamma_0 \sqrt{k \delta_{PML}} \, |\sigma| ).
$$
	\end{proof}    
	Using \eqref{multracest}, similar to the proof of Theorem \ref{singmult}, we can obtain the following result. 
	\begin{theorem}(For $m_{j}>1$)Under Assumptions \ref{keyasump}, for large enough $|\sigma|$, the following estimate holds for all $\alpha_{j}\in P(k)$
		\begin{equation*}
	|\sum_{i=1}^{n_{j}^{\sigma}}M(D_{\sigma}(\alpha^{\sigma}_{i,j}))(\alpha^{\sigma}_{i,j}-\alpha_{j})|\leq C^*e^{-\gamma_{0}\sqrt{k\delta_{PML}}|\sigma|} .
		\end{equation*}  
	\end{theorem}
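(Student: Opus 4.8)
The plan is to repeat the argument of Theorem~\ref{singmult} almost verbatim, with the single-eigenvalue trace identity \eqref{tracest} replaced by its multiplicity-weighted version \eqref{multracest}. For $|\sigma|$ large enough, Lemma~\ref{multiplicity} tells us that $D_{\sigma}(\alpha)$ is normal with respect to $\partial B_{\delta_{PML}}(\alpha_{j})$ and that $\sum_{i=1}^{n_{j}^{\sigma}}M(D_{\sigma}(\alpha_{i,j}^{\sigma}))=m_{j}$; hence the generalized residue theorem (Lemma~\ref{residue}) applied with $S(\alpha)=D_{\sigma}(\alpha)$ and $f(\alpha)=\alpha-\alpha_{j}$ produces exactly \eqref{multracest}. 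It therefore remains to estimate the right-hand side of \eqref{multracest}, which is the very same contour integral that appears in the proof of Theorem~\ref{singmult}; consequently the bound follows by the same manipulation.

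Concretely, I would first expand $(D_{\sigma})^{-1}(\alpha)$ in a Neumann series: writing $D_{\sigma}(\alpha)=D(\alpha)+\Delta_{\sigma}D(\alpha)$ and using that, by Theorem~\ref{expdecay}, $\sup_{\alpha\in\partial B_{\delta_{PML}}(\alpha_{j})}\|D^{-1}(\alpha)\Delta_{\sigma}D(\alpha)\|_{*}<1$ once $|\sigma|$ is large, one gets $(D_{\sigma})^{-1}(\alpha)=D^{-1}(\alpha)\sum_{p\ge 0}(-\Delta_{\sigma}D(\alpha)D^{-1}(\alpha))^{p}$ uniformly on the contour. Substituting this together with $\frac{d}{d\alpha}D_{\sigma}=\frac{d}{d\alpha}D+\frac{d}{d\alpha}\Delta_{\sigma}D$ into \eqref{multracest}, the term arising from $p=0$ paired with $\frac{d}{d\alpha}D$ is $\frac{1}{2\pi i}\, tr\int_{\partial B_{\delta_{PML}}(\alpha_{j})}(\alpha-\alpha_{j})D^{-1}(\alpha)\frac{d}{d\alpha}D(\alpha)\,d\alpha$, which vanishes by Lemma~\ref{residue} applied to $D(\alpha)$ itself, since $\alpha_{j}$ is the only propagating value of $D$ in $B_{\delta_{PML}}(\alpha_{j})$ and $f(\alpha_{j})=0$. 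Every remaining term carries at least one factor $\Delta_{\sigma}D$.

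Next, using the cyclicity of the trace and integration by parts along the closed contour $\partial B_{\delta_{PML}}(\alpha_{j})$ (the boundary contributions vanish), I would recast the remaining sum in the form $\sum_{p\ge 1}\frac{1}{2\pi i}\, tr\int_{\partial B_{\delta_{PML}}(\alpha_{j})}(\alpha-\alpha_{j})\frac{d}{d\alpha}[D^{-1}(\alpha)\Delta_{\sigma}D(\alpha)]^{p}\,d\alpha$, exactly as in the proof of Theorem~\ref{singmult}. Integrating by parts once more, the leading $p=1$ term equals $-\frac{1}{2\pi i}\, tr\int_{\partial B_{\delta_{PML}}(\alpha_{j})}D^{-1}(\alpha)\Delta_{\sigma}D(\alpha)\,d\alpha$; since $\Delta_{\sigma}D(\alpha)$ is holomorphic on $\overline{B_{\delta_{PML}}(\alpha_{j})}$, Lemma~\ref{traceineq} with $S(\alpha)=\Delta_{\sigma}D(\alpha)$ bounds it by $C(\delta_{PML},rank(\phi_{j}^{1}),L_{\alpha_{j}},n_{j})\sup_{\alpha\in\partial B_{\delta_{PML}}(\alpha_{j})}\|\Delta_{\sigma}D(\alpha)\|_{*}$, which is at most $Ce^{-\gamma_{0}\sqrt{k\delta_{PML}}|\sigma|}$ by Theorem~\ref{expdecay}. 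The terms with $p\ge 2$ carry $p$ factors $\Delta_{\sigma}D$ and, by the same Laurent-expansion estimate through \eqref{decompori}, contribute $O(e^{-2\gamma_{0}\sqrt{k\delta_{PML}}|\sigma|})$, which is absorbed; this yields the asserted inequality.

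The step that needs the most care is purely the finite-rank bookkeeping: one has to verify that the cyclic trace identity, differentiation under the trace, and the integrations by parts are legitimate, which rests on $D^{-1}(\alpha)$ being finitely meromorphic at $\alpha_{j}$ with the explicit Laurent tail \eqref{decompori}, while every other factor ($\Delta_{\sigma}D$ and its products) is holomorphic on and inside the contour, so that each integrand is precisely of the type controlled by Lemma~\ref{traceineq}. The only genuinely new feature compared with the case $m_{j}=1$ is that the single perturbed eigenvalue is replaced by a cluster $\{\alpha_{i,j}^{\sigma}\}_{i=1}^{n_{j}^{\sigma}}$, but because the contour $\partial B_{\delta_{PML}}(\alpha_{j})$ is fixed and stays uniformly bounded away from $\alpha_{j}$, the Neumann series converges uniformly on it for $|\sigma|$ large and no additional argument is required; the appearance of $rank(\phi_{j}^{1})$ in the constant (absent from the $m_{j}=1$ statement) enters exactly through the number of negative-power terms of \eqref{decompori} used in Lemma~\ref{traceineq}.
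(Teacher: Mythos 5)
Your proposal is correct and follows exactly the route the paper intends: the paper's own proof is the one-line remark that the result follows from \eqref{multracest} by the same argument as Theorem \ref{singmult}, and your write-up simply fills in those details (Neumann series on the fixed contour, vanishing of the $p=0$ term via Lemma \ref{residue}, integration by parts, and Lemma \ref{traceineq} applied to $\Delta_{\sigma}D(\alpha)$ with the $\mathrm{rank}(\phi_{j}^{1})$ dependence entering through the Laurent tail \eqref{decompori}). No substantive difference from the paper's approach.
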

    Here, $C^*=C^*(m_j,r_{j,1}\delta_{PML},C_{\delta_{PML}},C_{exp})=2^{r_{j,1}+2}m_j\delta_{PML}C_{\delta_{PML}}C_{exp}$. $\gamma_0$ and $C_{exp}$ are defined in Lemma \ref{lemma1} and Theorem \ref{expdecay}, respectively. 
    
	\section{Discretization of the PML problem}\label{sect6}  
	In this section, we first transform the PML problem into a block matrix representation, then carry out the finite element discretization, and finally obtain the final numerical format as well as its finite element discretization error.
	\subsection{Block matrix representation of the PML problem}
	For all $\phi, \psi \in H^1_{per}(\tilde{\Omega}_H)$, let  
	\begin{eqnarray*}
		&&a_{1}(\phi,\psi)=\int_{\tilde{\Omega}_H}s(x_{2})\frac{\partial \phi}{\partial x_{1}} \cdot\frac{\partial \bar{\psi}}{\partial x_{1}} +\frac{1}{s(x_{2})}\frac{\partial \phi}{\partial x_{2}} \cdot\frac{\partial \bar{\psi}}{\partial x_{2}} -k^2(\gamma(x)) s(x_{2})\phi\bar{\psi}dx, \\ 
		&&a_{2}(\phi,\psi)=-2i\int_{\tilde{\Omega}_H}s(x_{2})\frac{\partial \phi}{\partial x_{1}}\bar\psi dx, \\ 
		&&a_{3}(\phi,\psi)=\int_{\tilde{\Omega}_H}s(x_{2})\phi\bar{\psi}dx.
	\end{eqnarray*}    
    We can rewrite the problem \eqref{Dpblm} as: find nonvanishing $\phi\in X_{per}(\tilde{\Omega}_H)$ and $\alpha\in\mathbb{C}$ such that
	\begin{equation}
		a_{\alpha,\tilde{\Omega}_{H}}(\phi,\psi)=a_{1}(\phi,\psi)+\alpha a_{2}(\phi,\psi)+\alpha^2 a_{3}(\phi,\psi)=0, ~\forall\psi\in X_{per}(\tilde{\Omega}_H). \label{Dpblm1} 
	\end{equation}    
	According to the Riesz representation theorem, there exist linear operators $A_{\sigma,i}: X_{per}(\tilde{\Omega}_H) \rightarrow X_{per}(\tilde{\Omega}_H), \, i=1,2,3$, such that $a_i(\phi,\psi)=(A_{\sigma,i}\phi,\psi)_{H^1_{per}(\tilde\Omega_H)}$. To compute this quadratic eigenvalue problem, we introduce a shift of the eigenvalue as in \cite{Engstrom1}. Take $ t \in \mathbb{C} $ and define  
$$
\tilde{A}_{\sigma,1}=A_{\sigma,1}+tA_{\sigma,2}+t^2A_{\sigma,3},\quad 
\tilde{A}_{\sigma,2}=A_{\sigma,2}+2tA_{\sigma,3},\quad 
\tilde{A}_{\sigma,3}=A_{\sigma,3}.
$$
Then  
$$
A_{\sigma,1}+A_{\sigma,2}\alpha +A_{\sigma,3}\alpha^{2}
= \tilde{A}_{\sigma,1}+\tilde{A}_{\sigma,2}(\alpha-t) +\tilde{A}_{\sigma,3}(\alpha-t)^{2}.
$$
	Then, we can obtain an equivalent block matrix representation of \eqref{Dpblm1}: find nonvanishing $\phi,\eta\in X_{per}(\tilde{\Omega}_H)$ and $\alpha\in\mathbb{C}$ such that
	\begin{equation}
		\begin{bmatrix}
			(\tilde{A}_{\sigma,1}\phi,\psi)_{H^1_{per}(\tilde{\Omega}_H)}  \\ 
			(\eta,\xi)_{H^1_{per}(\tilde{\Omega}_H)}
		\end{bmatrix}
		=(\alpha-t)
		\begin{bmatrix}
			-(\tilde{A}_{\sigma,2}\phi  +\tilde{A}_{\sigma,3}\eta,\psi)_{H^1_{per}(\tilde{\Omega}_H)} \\ 
			(\phi,\xi)_{H^1_{per}(\tilde{\Omega}_H)}  
		\end{bmatrix} ,\, \forall \psi,\xi \in X_{per}(\tilde{\Omega}_H) .
		\label{blockeig}
	\end{equation}  
	Therefore, let
	\begin{equation*}
		\mathcal{M}_{\sigma}^{0}=
		\begin{bmatrix}
			\tilde{A}_{\sigma,1} & 0 \\ 
			0 & I
		\end{bmatrix},~
		\mathcal{M}_{\sigma}^{1}=
		\begin{bmatrix}
			-\tilde{A}_{\sigma,2} &  -\tilde{A}_{\sigma,3} \\ 
			I  & 0
		\end{bmatrix}.
	\end{equation*}    
	Here, $\mathcal{M}_{\sigma}^{i}$, $i=0,1$ are operators from $X_{per}(\tilde{\Omega}_H) \times X_{per}(\tilde{\Omega}_H) \rightarrow X_{per}(\tilde{\Omega}_H) \times X_{per}(\tilde{\Omega}_H)$. Then \eqref{blockeig} is equivalent to
	\begin{equation*}
(\mathcal{M}_{\sigma}^{0}\Phi,\Psi)_{1}=(\alpha-t)(\mathcal{M}_{\sigma}^{1}\Phi,\Psi)_{1}, \forall \Psi \in X_{per}(\tilde{\Omega}_H) \times X_{per}(\tilde{\Omega}_H).
	\end{equation*}
	Here $\Phi = (\phi, \eta), \phi, \eta \in X_{per}(\tilde{\Omega}_H)$, $\Psi = (\psi, \xi), \psi, \xi \in X_{per}(\tilde{\Omega}_H)$ and $(\Phi, \Psi)_{1} = (\phi, \psi)_{
		H^1_{per}(\tilde{\Omega}_H)} + (\eta, \xi)_{H^1_{per}(\tilde{\Omega}_H)}$. If $t \in \{ t \in \mathbb{C} | A_{\sigma,1}+tA_{\sigma,2}+t^2A_{\sigma,3} \textit{ is isomorphism.} \}$, then $\mathcal{M}_{\sigma}^{0}$ is an isomorphism. Then, we can define $\mathcal{L}_{\sigma} = (\mathcal{M}_{\sigma}^{0})^{-1}\mathcal{M}_{\sigma}^1$. Therefore, the generalized eigenvalue problem \eqref{Dpblm1} becomes a linear eigenvalue problem of finding the eigenvalues $\mu_{j}^{\sigma} = \frac{1}{\alpha_{j}^{\sigma}-t}$ of $\mathcal{L}_{\sigma}$.
	
	\subsection{Finite element discretization}
	Now we consider the finite element approximation of the block matrix equation \eqref{blockeig}. Let $M_{0}$ represent the matrix on the left side of the equation, and let $M_{1}$ represent the matrix on the right side. Take $\lbrace \mathcal{M}_{h}\rbrace$ as a regular triangulation of the domain $\tilde{\Omega}_H$, where $h>0$ is the longest side of all triangles. Furthermore, assume that any triangle $T$ must be completely contained in $\overline{\Omega_{+}^{PML}}$, $\overline{\Omega_{-}^{PML}}$, or $\overline{\Omega}$. The nodal basis functions corresponding to the triangulation $\{ \mathcal{M}_{h} \}$ are denoted as $\psi_{i}\in V_{h}(\tilde{\Omega}_H)$, $i=1$, $\cdots$, $N$.  We take $V_{h}(\tilde{\Omega}_H)\subset X_{per}(\tilde{\Omega}_H)$ as a space composed of first-order elements. In actual calculations, appropriate adjustments at the left and right boundaries can make space $V_{h}(\tilde{\Omega}_H)$ satisfy the periodic boundary conditions in the $x_{1}$ direction \cite{ChenWu}. Furthermore,
	\begin{equation*}
		\lim_{h\rightarrow 0}\inf_{u_{h}\in V_{h}(\tilde{\Omega}_H)}\Vert u-u_{h}\Vert _{H_{per}^{1}(\tilde{\Omega}_H)}=0,  \textit{ for all }         u\in X_{per}(\tilde{\Omega}_H).
	\end{equation*}
	Next, discretize the equation by taking $\phi_{h}=\sum_{i=1}^{N}\gamma_{i}\psi_{i}$, $\eta_{h}=\sum_{i=1}^{N}\xi_{i}\psi_{i}$ to represent the finite element interpolation functions corresponding to $\phi, \eta \in X_{per}(\tilde{\Omega}_H)$. Substituting them into \eqref{blockeig} gives the PML truncation propagative value problem under finite element discretization: find $\alpha \in \mathbb{C}$ such that there exist non-zero $\hat{\gamma}, \hat{\xi}$ satisfying,
	\begin{equation}\label{disblockdiag}
		\begin{bmatrix}
	A_{\sigma,1}^h+tA_{\sigma,2}^h+t^2A_{\sigma,3}^h & 0 \\ 
			0 & I^{h}
		\end{bmatrix}\begin{bmatrix}
			\hat{\gamma} \\ 
			\hat{\xi}
		\end{bmatrix}
		=(\alpha-t)
		\begin{bmatrix}
			-A_{\sigma,2}^h-2tA_{\sigma,3}^h & -A_{\sigma,3}^h \\ 
			I^{h} & 0
		\end{bmatrix}\begin{bmatrix}
			\hat{\gamma} \\ 
			\hat{\xi}
		\end{bmatrix}.
	\end{equation}

	Here the $(i,j)$ entries of matrix $A^h_{\sigma,k}$ are 
	$ a_{k}(\psi_{j},\psi_{i}),\,k=1,2,3$. $I^h$ is the identity matrix on the finite-dimensional space. In this way, we obtain the generalized eigenvalue problem corresponding to the PML problem.

	\subsection{Discretization error}   
	For each $\mu^{\sigma}$ (which is an eigenvalue of $L_{\sigma}$), take a contour $C_{{\mu}^{\sigma}}$ such that on $C_{{\mu}^{\sigma}}$, $L_{\sigma}$ is invertible, and within the bounded region enclosed by $C_{{\mu}^{\sigma}}$, $L_{\sigma}$ has only the eigenvalue $\mu^{\sigma}$. Similarly, $C_{\bar{\mu}^{\sigma}}$ can be taken. From this, we define two eigenvalue projections $E({\mu}^{\sigma}), E^{*}(\bar{\mu}^{\sigma})$ following the definition in \cite{Babuska, Boffi,Engstrom1}:
	\begin{eqnarray*}
		E(\mu^{\sigma})=\frac{1}{2\pi i}\int_{C_{\mu^{\sigma}}}(z-L_{\sigma})^{-1}dz, \\ 
		E^{*}(\bar{\mu}^{\sigma})=\frac{1}{2\pi i}\int_{C_{\bar{\mu}^{\sigma}}}(z-L^{*}_{\sigma})^{-1}dz.
	\end{eqnarray*}   
	Here, $E(\mu^{\sigma}):X_{per}(\tilde{\Omega}_H) \times X_{per}(\tilde{\Omega}_H) \rightarrow X_{per}(\tilde{\Omega}_H) \times X_{per}(\tilde{\Omega}_H)$, $E^{*}(\bar{\mu}^{\sigma}):X_{per}(\tilde{\Omega}_H) \times X_{per}(\tilde{\Omega}_H) \rightarrow X_{per}(\tilde{\Omega}_H) \times X_{per}(\tilde{\Omega}_H)$, and their ranges are
	\begin{equation*}
		R_{\mu^{\sigma}}:=\operatorname{Ker}(\mu^{\sigma}-L_{\sigma})^{r}, R^{*}_{\bar{\mu}^{\sigma}}:=\operatorname{Ker}(\bar{\mu}^{\sigma}-L^{*}_{\sigma})^{r}.
	\end{equation*}   
	Here, $r$ is the ascent of the operators $\mu^{\sigma}-L_{\sigma}$ and $\bar{\mu}^{\sigma}-L_{\sigma}^{*}$. In addition, a distance measure between two spaces is introduced
	\begin{equation*}
		d(V_{1},V_{2})=\sup_{v_{1}\in V_{1}}\inf_{v_{2}\in V_{2}}\frac{\Vert v_{1}-v_{2}\Vert _{H^{1}_{per}(\tilde{\Omega}_H)}}{\Vert v_{1}\Vert _{H^1_{per}(\tilde{\Omega}_H)}}.
	\end{equation*}    
	Therefore, it can be recorded as
	\begin{equation*}
		\gamma_{h}=d(R_{\mu_{\sigma}},V_{h}(\tilde{\Omega}_H)),~\gamma_{h}^{*}=d(R^{*}_{\mu_{\sigma}},V_{h}(\tilde{\Omega}_H)) .
	\end{equation*}    
	We assume that $R_{\mu^{\sigma}},R^{*}_{\bar{\mu}^{\sigma}} \subset H^2(\tilde{\Omega}_H)$. For first-order finite elements, the interpolation error estimates in \cite{Babuska, Brenner}, together with an argument analogous to the discussion of \cite[Equation (98)]{Engstrom1}, imply that there exist constants $C$ and $C^{*}$, depending only on the dimension and the domain $\tilde{\Omega}_{H}$, such that $\gamma_{h} \leq C h$ and $\gamma_{h}^{*} \leq C^{*} h$.
	Thus, let $N_{\sigma}=\dim R_{\mu^{\sigma}}$, and let $\mu^{\sigma}_{i}$, $i=1$, $\cdots$, $N_{\sigma}$ be the corresponding eigenvalues of $\mu^{\sigma}$ calculated by finite element discretization, then by \cite[Equation (95)]{Engstrom1}, we have
	\begin{equation}
		\max_{i=1,\cdots,N_{\sigma}}|\mu^{\sigma}-\mu^{\sigma}_{i}|\leq C (\gamma_h\gamma_{h}^*)^{\frac{1}{r}}\leq C h^{\frac{2}{r}}.\label{covrate} 
	\end{equation}     
	In our numerical experiments, we observe that the propagative values converge with a second‑order rate.

	\section{Numerical experiments} \label{sect7} 
	In this section, we consider two numerical experiments to verify the effectiveness of our method. We use the Matlab function $eigs$ to solve the eigensystem \eqref{disblockdiag} to compute the propagative wave numbers and the corresponding modes. First, in the following experiments, for the period $\Lambda=2\pi$, we choose the PML thickness to be $\delta=0.5$ and set $s(x_{2})$ accordingly, making the PML truncation error negligible compared to finite element discretization. In all experiments, the nonhomogeneous periodic medium of the Wigner-Seitz cell is in $[-\pi,\pi]\times[-0.5,0.5]$.  Since the periodic $\Lambda$ is $2\pi$, the propagative wave numbers are all in the interval $[-0.5, 0.5]$.  

    Since a guided mode remains a guided mode when multiplied by a constant, we compute its error as follows. We fix an exact solution $ u_{guide}^{e} $ and denote the numerically obtained eigenfunction as $ u_{guide}^{h} $. The $ L^2 $ error for the guided mode is then defined as  
$$
\min_{c \in \mathbb{C}} \bigl\Vert u_{guide}^{e} - c \, u_{guide}^{h} \bigr\Vert_{L^2(\Omega_H)}= \bigl\Vert u_{guide}^{e} - \frac{(u_{guide}^{e},u_{guide}^{h})_{L^2(\Omega_H)}}{(u_{guide}^{h},u_{guide}^{h})_{L^2(\Omega_H)}} \, u_{guide}^{h} \bigr\Vert_{L^2(\Omega_H)} .
$$
In essence, this treats the numerical solution as a hyperplane spanned by the computed eigenfunction and measures the $ L^2 $ error as the distance from the fixed exact solution to that hyperplane.

	
Note that the matrices $A^{h}_{\sigma,1}$ and $A^{h}_{\sigma,3}$ are complex symmetric, while $A^{h}_{\sigma,2}$ is skew‑symmetric. $\alpha$ is an eigenvalue of the discrete problem is equivalent to the condition that there exists a nonzero vector $\hat{x}$ satisfying
$$
\begin{bmatrix}
A_{\sigma,1}^h+tA_{\sigma,2}^h+t^2A_{\sigma,3}^h & 0 \\ 
0 & I^{h}
\end{bmatrix}\hat{x}
=(\alpha-t)
\begin{bmatrix}
-A_{\sigma,2}^h-2tA_{\sigma,3}^h & -A_{\sigma,3}^h \\ 
I^{h} & 0
\end{bmatrix}\hat{x}.
$$
A straightforward calculation shows that this is further equivalent to the condition
$$
\det\bigl(A_{\sigma,1}^h+\alpha A_{\sigma,2}^h+\alpha^2A_{\sigma,3}^h\bigr)=0.
$$
Thus,
$$
\det\bigl(A_{\sigma,1}^h-\alpha A_{\sigma,2}^h+\alpha^2A_{\sigma,3}^h\bigr)
= \det\Bigl((A_{\sigma,1}^h+\alpha A_{\sigma,2}^h+\alpha^2A_{\sigma,3}^h)^T\Bigr)=0.
$$
	This means that $\alpha$ and $-\alpha$ are the eigenvalues of the discrete problem simultaneously. Then, to find the propagative wave numbers, we only need to solve the problem \eqref{disblockdiag} to find all the eigenvalues with a very small imaginary part and whose real part lies in $[0,0.5]$.

In all numerical experiments of this paper, the shift parameter is taken as $ t = 0 $.

	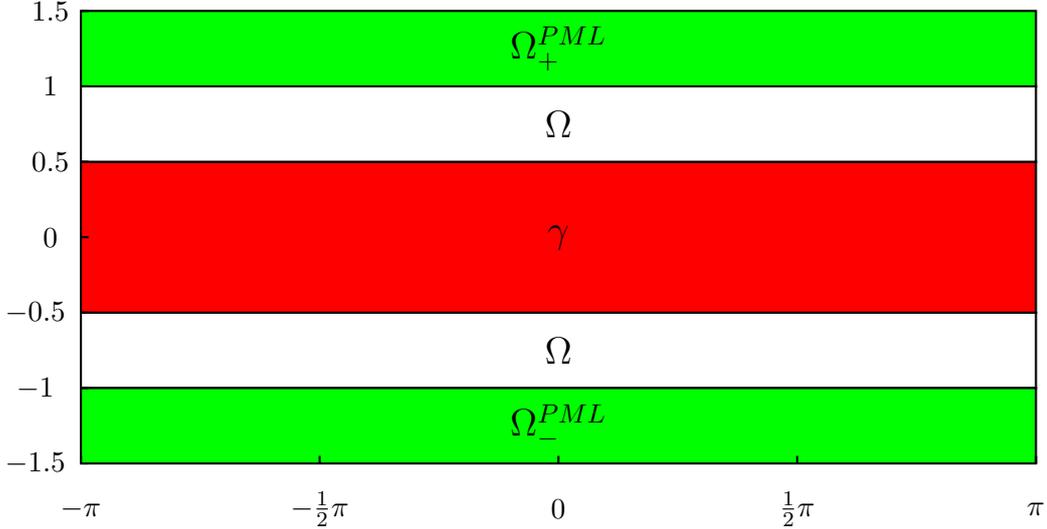
\begin{figure}[ht]
    \centering
		\begin{tikzpicture}[scale=1.5,thick]
			\filldraw[fill = white] (-pi,-1) rectangle (pi,-.5);
			\filldraw[fill = green] (-pi,-1.5) rectangle (pi,-1);
			\filldraw[fill = red] (-pi,-0.5) rectangle (pi,0.5);
			\filldraw[fill = white] (-pi,0.5) rectangle (pi,1);
			\filldraw[fill = green] (-pi,1) rectangle (pi,1.5);
			\draw[font = \Large] (0,1.25) node{$\Omega^{PML}_{+}$};
			\draw[font = \Large] (0,0.75) node{$\Omega$};
			\draw[font = \Large] (0,0) node{$\gamma$};
			\draw[font = \Large] (0,-1.25) node{$\Omega^{PML}_{-}$};
			\draw[font = \Large] (0,-0.75) node{$\Omega$};
			\draw (0,-1.5) -- (0,-1.45) node at(0,-1.8) {$0$};
			\draw (-pi,-1.5) -- (-pi+.05,-1.5) node at(-pi-0.3,-1.5) {$-1.5$};
			\draw (-pi,-1) -- (-pi+.05,-1) node at(-pi-0.3,-1) {$-1$};
			\draw (-pi,-.5) -- (-pi+.05,-.5) node at(-pi-0.3,-.5) {$-0.5$};
			\draw (-pi,0) -- (-pi+.05,0) node at(-pi-0.2,0) {$0$};
			\draw (-pi,.5) -- (-pi+.05,.5) node at(-pi-0.2,.5) {$0.5$};
			\draw (-pi,1) -- (-pi+.05,1) node at(-pi-0.2,1) {$1$};
			\draw (-pi,1.5) -- (-pi+.05,1.5) node at(-pi-0.2,1.5) {$1.5$};
			\foreach \x/\y in {-1/-,-0.5/-\frac{1}{2},0.5/\frac{1}{2},1/}
			\draw (\x*pi,-1.5) -- (\x*pi,-1.45) node at(\x*pi,-1.8) {$\y\pi$};
		\end{tikzpicture}
		\caption{Computational domain in Experiment 1.}\label{exp2}
	\end{figure}

    \subsection*{Experiment 1} The wave number $k=1.6$, and the refractive index $\gamma=9$ in the domain $(-\pi,\pi)\times(-0.5,0.5)$.   This example is inspired by the example given in \cite{Kirsch5}.  The propagative wave numbers can be explicitly calculated to be approximately $\alpha_1=0.4368$ and $\alpha_2=0.2911$, and the corresponding guided modes are of the form 
	\begin{equation*}
		u_{guide}^{(1)}(x)=e^{i\alpha_1x_1}\begin{cases}
			\begin{array}{cc}
				e^{-3ix_1}   \sin{\sqrt{\frac{\gamma k^2-(\alpha_1-3)^2}{2}}}e^{-\sqrt{(\alpha_1-3)^2-k^2}(x_2-\frac12)},   x_2>\frac{1}{2}, \\
				e^{-3ix_1} \sin{(\sqrt{\gamma k^2-(\alpha_1-3)^2}x_2)},  -\frac{1}{2}<x_2<\frac{1}{2}, \\
				-e^{-3ix_1}   \sin{\sqrt{\frac{\gamma k^2-(\alpha_1-3)^2}{2}}}e^{\sqrt{(\alpha_1-3)^2-k^2}(x_2+\frac12)},   x_2<-\frac{1}{2}.        
			\end{array}
		\end{cases}
	\end{equation*}
	
	\begin{equation*}
		u_{guide}^{(2)}(x)=e^{i\alpha_2x_1}\begin{cases}
			\begin{array}{cc}
				e^{4ix_1}   \cos{\sqrt{\frac{\gamma k^2-(\alpha_2+4)^2}{2}}}e^{-\sqrt{(\alpha_2+4)^2-k^2}(x_2-\frac12)},   x_2>\frac{1}{2}, \\
				e^{4ix_1} \cos{(\sqrt{\gamma k^2-(\alpha_2+4)^2}x_2)},  -\frac{1}{2}<x_2<\frac{1}{2}, \\
				e^{4ix_1}   \cos{\sqrt{\frac{\gamma k^2-(\alpha_2+4)^2}{2}}}e^{\sqrt{(\alpha_2+4)^2-k^2}(x_2+\frac12)},   x_2<-\frac{1}{2}.        
			\end{array}
		\end{cases}
	\end{equation*}
	The computing domain is shown in Figure \ref{exp2}, the PML parameter $$s(x_{2}) = 1+40*(1+i)\begin{cases}
		\begin{array}{cc}
			(\frac{x_{2}-1}{0.5})^3,  & x_2>1, \\
			(\frac{x_{2}+1}{0.5})^3,  & x_2<-1. 
		\end{array}.
	\end{cases}$$
	The convergence rates of these two pairs of propagative wave numbers are shown in Table \ref{tabula2} and Table \ref{tabula3}. The order shows an approximate second-order convergence in solving the open waveguide problem, which is in agreement with our theory.
	\begin{table}[ht]
    \centering
        		\caption{Numerical results and convergence orders for the propagative wave number $\alpha_1=0.4368$ and the corresponding guided mode $u_{guide}^{(1)}$ in Experiment 1.}
		\begin{tabular}{|c|c|c|c|c|}
			\hline
			$hmax$        &          $\alpha_1$ &    Order     &  $L^2$ error of $u_{guide}^{(1)}$&    Order  \\
			\hline
			$2^{-3}$         &0.5060 + 6E-03i & --- & 3.48E-01 & --- \\ 
			\hline
			$2^{-4}$         &0.4540 + 2E-03i   & 2.01 & 7.66E-02& 2.18 \\ 
			\hline
			$2^{-5}$         &0.4411 + 5E-04i  & 2.00 & 1.91E-02 &2.01 \\
			\hline      
			$2^{-6}$         &0.4379 + 1E-04i & 1.99 & 4.79E-03&1.99 \\ 
			\hline
			$2^{-7}$         &0.4371 + 3E-05i  &  1.98 & 1.20E-03 &1.99 \\
			\hline
		\end{tabular}
		\label{tabula2}
	\end{table}
	
	\begin{table}[ht]
    \centering
		\caption{Numerical results and convergence orders for the propagative wave number $\alpha_2=0.2911$ and the corresponding guided mode $u_{guide}^{(2)}$ in Experiment 1.}
		\begin{tabular}{|c|c|c|c|c|}
			\hline
			$hmax$        &          $\alpha_2$ &    Order     &   $L^2$ error of $u_{guide}^{(2)}$ &    Order  \\
			\hline
			$2^{-3}$         &0.2544 - 2E-04i & --- & 1.46E-01 & --- \\ 
			\hline
			$2^{-4}$         &0.2820 - 5E-05i	   & 1.99 & 3.64E-02& 2.01 \\ 
			\hline
			$2^{-5}$         &0.2888 - 1E-05i   & 2.00 & 9.07E-03&2.00 \\
			\hline      
			$2^{-6}$         &0.2905 - 4E-06i	 & 1.99 & 2.27E-03&2.00 \\ 
			\hline
			$2^{-7}$         &0.2909 - 9E-07i	  &  1.98 & 5.67E-04 &2.00 \\
			\hline
		\end{tabular}
		\label{tabula3}
	\end{table}

	\subsection*{Experiment 2} The wave number $k=\pi$. The refractive index in the periodic cell is piecewise constant, as shown in Figure \ref{exp3}. We set the refractive index $\gamma_1=6, \gamma_2=10$ in the cell $(-\pi,\pi)\times (-0.5,0.5)$. In this situation,  the explicit representations of the guided modes cannot be derived, and the propagative wave numbers and modes must therefore be computed numerically. 
	The computing domain is shown in Figure \ref{exp3}. We set the PML parameter 
	$$s(x_{2}) = 1+40*(1+i)\begin{cases}
		\begin{array}{cc}
			(\frac{x_{2}-1}{0.5})^3,  & x_2>1, \\
			(\frac{x_{2}+1}{0.5})^3,  & x_2<-1. 
		\end{array}
	\end{cases}$$ 
 	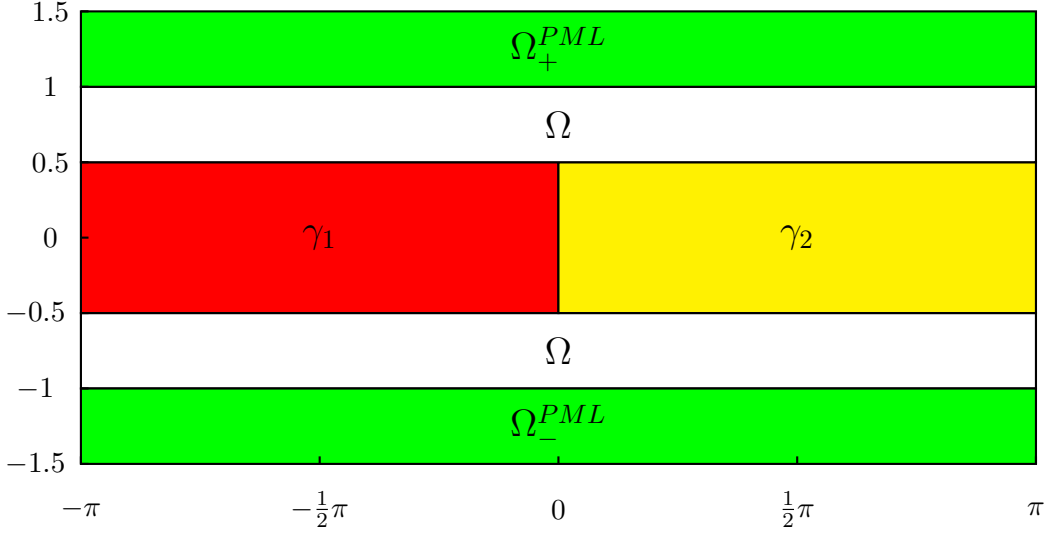
\begin{figure}[ht]
    \centering
		\begin{tikzpicture}[scale=1.5,thick]
			\filldraw[fill = white] (-pi,-1) rectangle (pi,-.5);
			\filldraw[fill = green] (-pi,-1.5) rectangle (pi,-1);
			\filldraw[fill = red] (-pi,-0.5) rectangle (0,0.5);
			\filldraw[fill = yellow] (0,-0.5) rectangle (pi,0.5);
			\filldraw[fill = white] (-pi,0.5) rectangle (pi,1);
			\filldraw[fill = green] (-pi,1) rectangle (pi,1.5);
			\draw[font = \Large] (0,1.25) node{$\Omega^{PML}_{+}$};
			\draw[font = \Large] (0,0.75) node{$\Omega$};
			\draw[font = \Large] (-pi/2,0) node{$\gamma_{1}$};
			\draw[font = \Large] (pi/2,0) node{$\gamma_{2}$};
			\draw[font = \Large] (0,-1.25) node{$\Omega^{PML}_{-}$};
			\draw[font = \Large] (0,-0.75) node{$\Omega$};
			\draw (0,-1.5) -- (0,-1.45) node at(0,-1.8) {$0$};
			\draw (-pi,-1.5) -- (-pi+.05,-1.5) node at(-pi-0.3,-1.5) {$-1.5$};
			\draw (-pi,-1) -- (-pi+.05,-1) node at(-pi-0.3,-1) {$-1$};
			\draw (-pi,-.5) -- (-pi+.05,-.5) node at(-pi-0.3,-.5) {$-0.5$};
			\draw (-pi,0) -- (-pi+.05,0) node at(-pi-0.2,0) {$0$};
			\draw (-pi,.5) -- (-pi+.05,.5) node at(-pi-0.2,.5) {$0.5$};
			\draw (-pi,1) -- (-pi+.05,1) node at(-pi-0.2,1) {$1$};
			\draw (-pi,1.5) -- (-pi+.05,1.5) node at(-pi-0.2,1.5) {$1.5$};
			\foreach \x/\y in {-1/-,-0.5/-\frac{1}{2},0.5/\frac{1}{2},1/}
			\draw (\x*pi,-1.5) -- (\x*pi,-1.45) node at(\x*pi,-1.8) {$\y\pi$};
		\end{tikzpicture}
		\caption{Computational domain in Experiment 2.}\label{exp3}
	\end{figure}
    
In this case, we verify the correctness of the guided modes by using numerical results on a very fine mesh, where the numerical modes correspond to the actual guided modes. The guided-mode screening method used in this experiment is the same as in Experiments 1 and 2. Specifically, we select the numerical guided modes whose propagative wave numbers have a much smaller imaginary part than the real part. When the meshsize $hmax$ is $2^{-8}$, all the eigenvalues with arguments lying in $[ -\arctan{0.1},\arctan{0.1}]$ and real parts lying in $[0, 0.5]$ are 0.2728 + 3E-03i and 0.4637 + 1E-04i.  We denote $\alpha_3=0.4637$ and $\alpha_4=0.2728$ as the two propagative wave numbers, and by $u_{guide}^{(3)}, u_{guide}^{(4)}$ the corresponding guided modes.
		
		The convergence rates of these two pairs of propagative wave numbers are shown in Table \ref{tabula5} and Table \ref{tabula6}. Here we remark that, since exact propagative wave numbers and modes are not available, we just use the numerical result with meshsize $hmax=2^{-8}$ as the exact ones to compute the convergence rates. The order shows an approximate second-order convergence, which is in agreement with our theory. 
		
		As shown in Figures \ref{alpha23}, \ref{alpha45}, the guided modes calculated by our numerical method show rapid decay outside the medium layer.  These results further confirm the theory result in Corollary \ref{corollary2} that the guided modes decay exponentially outside the periodic structure. 
		
		\begin{table}[ht]
        \centering
		\caption{Numerical errors and convergence orders for the guided mode $\alpha_3=0.4637$ and the corresponding guided mode $u_{guide}^{(3)}$ in Experiment 2.}
			\begin{tabular}{|c|c|c|c|c|}
				\hline
				$hmax$        &          $\alpha_3$ &    Order     &  $L^2$ error of $u_{guide}^{(3)}$ &    Order  \\ \hline
				$2^{-3}$         &0.3241 - 3E-02i & --- & 2.70E-03 & --- \\ 
				\hline
				$2^{-4}$         &0.3962 + 4E-05i	   & 1.05 & 7.15E-04 & 1.92 \\ 
				\hline
				$2^{-5}$         &0.4446 + 4E-05i   & 1.82 & 3.20E-04 &1.16 \\
				\hline      
				$2^{-6}$         &0.4587 + 1E-04i	 & 1.94 & 9.98E-05 &1.68 \\ 
				\hline
				$2^{-7}$         &0.4627 + 1E-04i	  &  2.29 & 2.16E-05 &2.21 \\
				\hline
				$2^{-8}$         &0.4637 + 1E-04i	  &  --- & --- & --- \\
				\hline
			\end{tabular}
			\label{tabula5}
		\end{table}
		
		\begin{table}[ht]
        \centering
					\caption{Numerical errors and convergence orders for the guided mode $\alpha_4=0.2728$ and the corresponding guided mode $u_{guide}^{(4)}$ in Experiment 2.}
			\begin{tabular}{|c|c|c|c|c|}
				\hline
				$hmax$        &          $\alpha_4$ &    Order     &   $L^2$ error of $u_{guide}^{(4)}$ &    Order  \\
				\hline
				$2^{-3}$         &0.2163 + 2E-03i & --- & 2.89E-03 & --- \\ 
				\hline
				$2^{-4}$         &0.2030 + 3E-03i	   & -0.31 & 3.79E-04& 2.93 \\ 
				\hline
				$2^{-5}$         &0.2555 + 3E-03i   & 2.02 & 9.45E-05&2.01 \\
				\hline      
				$2^{-6}$         &0.2686 + 3E-03i	 & 2.07 & 2.20E-05&2.10 \\ 
				\hline
				$2^{-7}$         &0.2720 + 3E-03i	  &  2.36 & 4.30E-06 &2.36 \\
				\hline
				$2^{-8}$         &0.2728 + 3E-03i & --- & --- & --- \\ 
				\hline
			\end{tabular}
			\label{tabula6}
		\end{table}
		
		\begin{figure}[ht]
			\centering{
				{\includegraphics[width=0.45\textwidth]{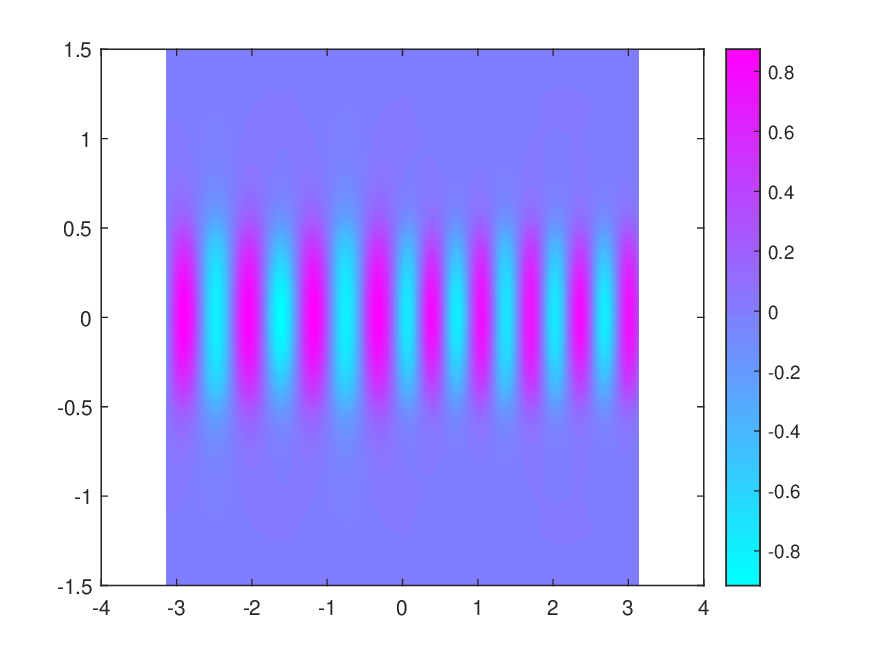}}
				{\includegraphics[width=0.45\textwidth]{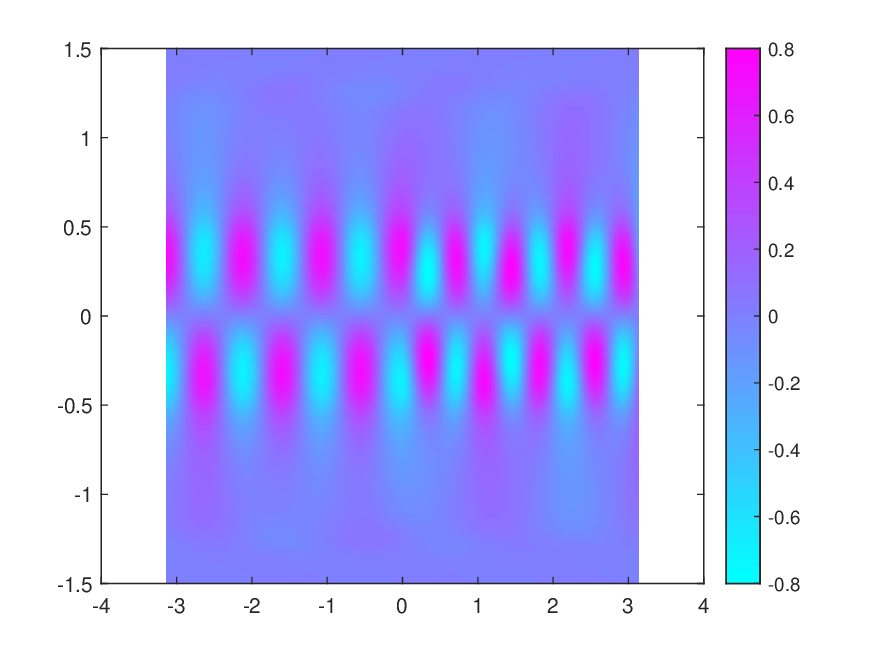}}
			}
			\caption{Numerical guided modes calculated in Experiment 2. Left:  $\alpha_3=0.4637$; Right: $\alpha_4=0.2728$.}
			\label{alpha23}
		\end{figure}   
		\begin{figure}[ht]
			\centering{
				{\includegraphics[width=0.45\textwidth]{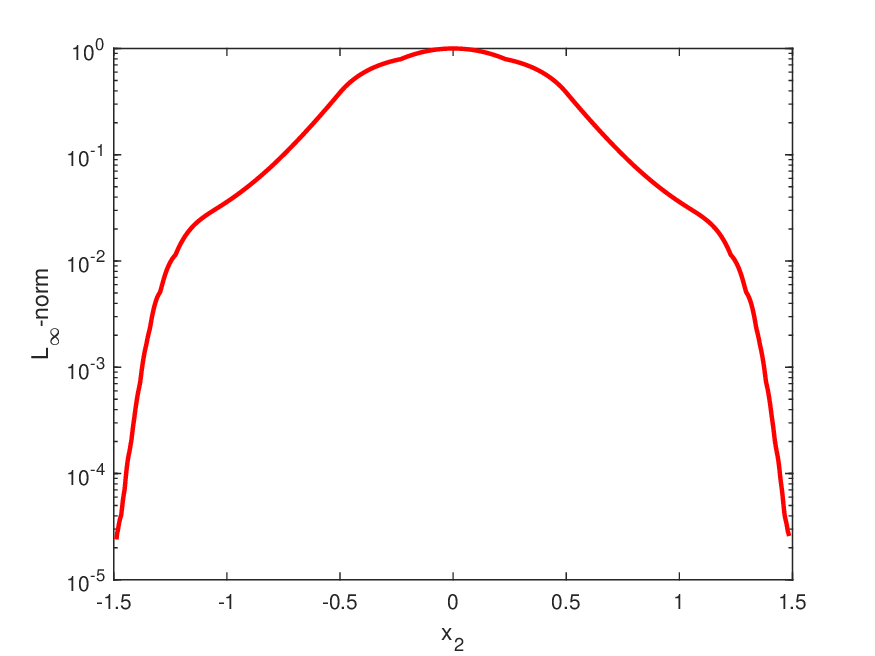}}
				{\includegraphics[width=0.45\textwidth]{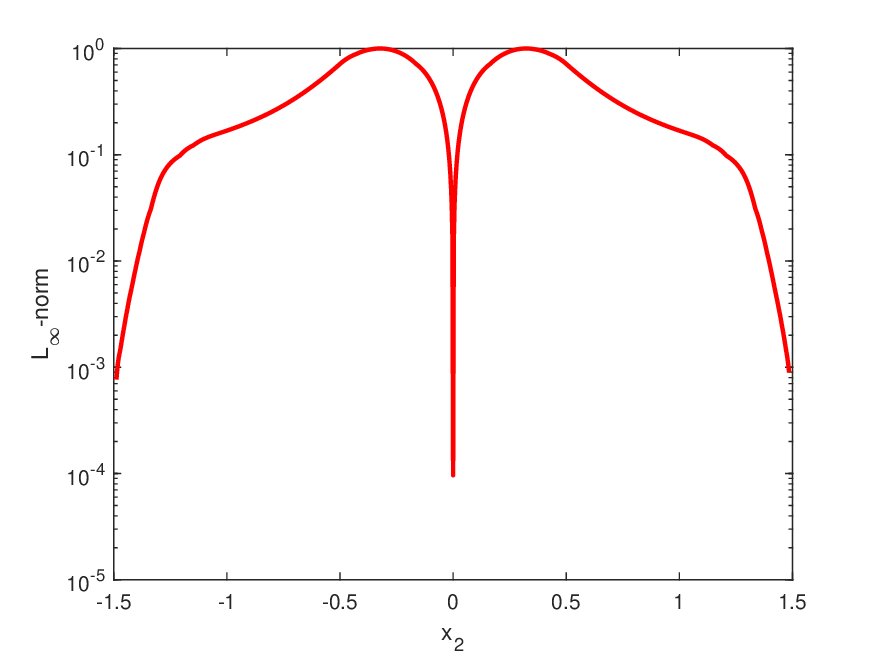}}
			}
			\caption{$\Vert u^{(3)}_{prop}(\cdot ,x_2) \Vert_{L^{\infty}}$(Left) and $\Vert u^{(4)}_{prop}(\cdot ,x_2) \Vert_{L^{\infty}}$(Right) calculated in Experiment 2.}
			\label{alpha45}
		\end{figure}  

\end{document}